%% file: 00_peri_fmo.tex
\documentclass[a4paper,11pt]{article}
\usepackage{fmo-modern}
\newcommand{\R}{\mathbb R}\newcommand{\Rn}{\mathbb R^d}
\newcommand{\Sn}{\mathbb S^{d-1}}
\renewcommand{\O}{\Omega}
\newcommand{\Od}{\Omega_\delta}
\newcommand{\dx}{\,\mathrm dx}

\newcommand{\sym}{\mathrm s}
\newcommand{\asym}{\mathrm a}
\newcommand{\kadm}{\mathcal K}
\newcommand{\wsto}{\overset{*}{\rightharpoonup}}

\newcommand{\mres}{\mathbin{\vrule height 1.6ex depth 0pt width
0.13ex\vrule height 0.13ex depth 0pt width 1.3ex}}

\usepackage{enumitem}

\newtheorem{theorem}{Theorem}[section]\newtheorem{lemma}[theorem]{Lemma}\newtheorem{proposition}[theorem]{Proposition}\newtheorem{corollary}[theorem]{Corollary}
\theoremstyle{remark}\newtheorem{remark}[theorem]{Remark}\theoremstyle{definition}
\DeclareMathOperator{\Div}{div}\DeclareMathOperator{\supp}{spt}\DeclareMathOperator{\closure}{cl}
\providecommand{\doi}[1]{\href{https://doi.org/#1}{doi:#1}}
\title{Free-Material Optimization in a Peridynamic World}
\author{\authororcid{Anton Evgrafov}{0000-0002-3987-7745}\\Department of Mathematical Sciences, Aalborg University\\\texttt{anev@math.aau.dk}}
\date{October 14, 2026}
\hypersetup{pdftitle={Free-Material Optimization in a Peridynamic World},pdfauthor={Anton Evgrafov}}
\begin{document}\maketitle
\begin{abstract}
We study free-material optimization for nonlinear bond-based nonlocal diffusion and its localization as the interaction horizon \(\delta\) vanishes. Bond conductivities are subject to pointwise bounds and a global material budget constraint. After normalization, the asymptotic behavior of these problems is governed by the fraction \(\theta_\delta\) of potentially active bonds that can be assigned the maximal conductivity. 

We identify three distinct limiting regimes. 
If \(\theta_\delta\to0\), the normalized optimal values converge to those of the classical vanishing-material optimal design problem, equivalently formulated as a Lipschitz-constrained state problem or a measure-valued flux problem. 
For \(\theta_\delta\to\theta\in(0,1)\), we obtain a directional local free-material problem governed by the upper tail of the directional \(p\)-strain over position-direction space. 
Thus the state variable localizes, while the limiting constitutive law retains an internal directional variable inherited from the bonds. 
If \(\theta_\delta\to\theta\geq1\), the limit is the isotropic local \(p\)-Dirichlet problem. 

A convex minimax argument eliminates the normalized conductivity variable exactly, after which the analysis combines convex duality, nonlocal compactness, and \(\Gamma\)-convergence for convex integrands.
We prove a unified \(\Gamma\)-convergence result for the reduced state energies whenever \(\theta_\delta\to\theta\in[0,+\infty)\), yielding convergence of normalized optimal values and compactness and identification of optimal states throughout the full parameter range. At the vanishing-material endpoint, we additionally recover optimal measure-valued local fluxes from optimal nonlocal two-point fluxes.
\end{abstract}
\keywords{free-material optimization; nonlocal optimal design; nonlocal diffusion; 
vanishing material limit; localization}

\subjclass[2020]{49J45, 74A70, 49Q10, 35R09}

\input{01_introduction.tex}

\input{02_setup.tex}
\input{03_gamma.tex}
\input{04_flux_recovery.tex}
\input{05_discussion.tex}

\bibliographystyle{alpha}
\bibliography{nloc_frimatopt}

\appendix
\input{appx_A.tex}
\input{appx_B.tex}
\input{appx_C.tex}

\end{document}

%% file: 01_introduction.tex
\section{Introduction}\label{sec:intro}
Free-material optimization (FMO) is an optimal design paradigm in which the material constitutive tensor is treated as a pointwise design variable; see the original papers~\cite{kovcvara1998free,kovcvara2008free} as well as their discussion in monographs such as~\cite{cherkaev2012variational,allaire2012shape,bendsoe2013topology}. Although FMO may be regarded as a ``thought experiment,'' it provides rigorous bounds on what is achievable when the designer is constrained only by the fundamental physical requirements on the constitutive tensor, such as definiteness and symmetry, but not by homogeneity or isotropy.

In this work our starting point is an FMO problem within the bond-based peridynamic 
setting. 
The constitutive variable is thus attached not to a spatial point, but to a bond joining two interacting points.
We use a nonlinear nonlocal diffusion model of \(p\)-Laplacian type, with \(1<p<+\infty\), as our model. 
In schematic form, the potential formulation is
\begin{equation*}
    \sup_{\kappa }
    \inf_{\substack{u \in L^{p}(\Rn)\\u|_{\Rn\setminus\O} = 0}}
    \left\{
        \frac{1}{p}\iint_{|x-x'|<\delta} \kappa(x,x')\frac{|u(x)-u(x')|^{p}}{|x-x'|^p}
        \,\dx\dx' - \int_{\O} f(x)u(x)\dx
    \right\},
\end{equation*}
where \(\delta>0\) is the interaction horizon, \(\kappa(x,x')\) is the symmetric bond conductivity, \(f\) is the source, and the state vanishes outside the design domain \(\O\).\footnote{%
Equivalently, the problem can be formulated as minimization of the complementary energy over the conductivity and a two-point flux satisfying a nonlocal divergence constraint. 
The precise nonlocal operators, admissible material classes, normalizations, and optimal values are introduced in Section~\ref{sec:setup}.}
Such an optimization problem answers the question: \emph{if one is free to choose the properties of each bond in a bond-based peridynamic material, how should this choice be made in an optimal fashion?} 
This formulation aligns directly with the spirit of FMO, in which the designer is granted maximal freedom subject only to the intrinsic physical constraints of the underlying model.

In this work, we take this line of thought one step further.
Namely, we investigate \emph{what local optimal design problem emerges from the bond-based peridynamic FMO problem as the nonlocal horizon is driven to zero.}
There are several related studies addressing similar questions.
We mention, for example, \cite{AnMuRo19,AnMuRo21,andres2015type,andres2023minimization,mengesha2025asymptotic,evgrafov2019non,evgrafov2021nonlocal,evgrafov2024nonlocal,schytt2024dual}.
The main distinction of the present work is that we impose no a priori relation between 
the nonlocal two-point bond conductivity and a local one-point conductivity, such as an 
averaging relation.

 For the purposes of this introduction, it is sufficient to note that the asymptotic behavior is governed by a dimensionless relative material fraction, denoted here by 
 \(\theta_\delta\); see~\eqref{eq:relative-material-fraction} for a precise definition.
 This parameter compares the available conductivity budget with the amount required to assign the maximal admissible conductivity to every potentially active bond.
 Thus \(0<\theta_\delta<1\) means that only a fraction of the active bond set can be saturated, while \(\theta_\delta\geq1\) means that there is enough material to saturate all active bonds. 
 Our results reveal the following emergent three-regime phase diagram for the appropriately normalized problems:
 \[ \begin{array}{ccl} 
  \theta_\delta\to 0 &\Longrightarrow& \text{the vanishing-material optimal design problem}, \\[1mm] 
  \theta_\delta\to\theta\in(0,1) &\Longrightarrow& \text{a directional local free-material problem}, \\[1mm] 
  \theta_\delta\to\theta\in [1,+\infty) &\Longrightarrow& \text{the isotropic \(p\)-Dirichlet problem}. 
\end{array} \]
The two endpoint regimes recover classical local variational problems. 
In the vanishing-material regime, the limit is the optimal design problem, which admits equivalent formulations as a Lipschitz-constrained state problem or an \(L^1\)-flux problem and its measure-valued relaxation; see~\cite{evgrafov2021nonlocal} for a closely related work.
At the opposite endpoint, corresponding to \(\theta\geq1\), the material constraint becomes asymptotically inactive and the limit is the standard isotropic \(p\)-Dirichlet problem.

The intermediate regime is qualitatively different. 
The state variable localizes, but the limiting constitutive law retains an internal directional variable inherited from the bonds. 
Its energy depends on the distribution of the directional strains 
\[ |\nabla u(x)\cdot s|^p, \qquad (x,s)\in\O\times\mathbb{S}^{d-1}. \] 
Consequently, the state problem localizes, while the constitutive description retains a directional internal variable.
In concise terms, localization eliminates the bond length while retaining the bond direction. 
The resulting directional free-material problem is defined precisely in Section~\ref{sec:intermediate-material} and interpreted further in Section~\ref{sec:discussion}.

The three regimes are covered by a single variational convergence result. After exact elimination of the normalized conductivity variable, the normalized design problems admit reduced state energies \(\mathcal{E}_{\delta,\theta_\delta}\). If \[ \theta_\delta\to\theta\in[0,+\infty), \] then \[ \mathcal{E}_{\delta,\theta_\delta} \xrightarrow{\Gamma} \mathcal{E}_\theta \qquad\text{in }L^p(\O), \] and the family on the left is equicoercive. Consequently, the normalized optimal values converge, every sequence of optimal reduced states is relatively compact in \(L^p(\O)\), and every cluster point is an optimal state of the limiting problem. The limiting energy \(\mathcal{E}_\theta\) is the Lipschitz energy when \(\theta=0\), the directional upper-tail energy when \(0<\theta<1\), and the rescaled \(p\)-Dirichlet energy when \(\theta\geq1\). The precise statements are given in Proposition~\ref{prop:intermediate-gamma-limit} and Theorem~\ref{thm:intermediate-localization}.

This reduction is obtained by a convex minimax argument and produces an upper-tail functional of the nonlocal directional strain. A scalar dual representation reduces the localization argument to \(\Gamma\)-convergence for fixed convex nonlocal integrands, allowing the sparse, intermediate, and saturated regimes to be treated within one proof. 

At the sparse endpoint, we obtain an additional optimizer-level result. Optimal nonlocal two-point fluxes have asymptotically minimal normalized \(L^1\)-norm, and their recovered one-point fluxes are weak-star precompact as vector-valued Radon measures. Every weak-star cluster point of the recovered flux measures solves the local measure-valued  optimal design problem. We also record an exact reference result: after removing the pointwise conductivity cap, the reduced nonlocal value is independent of the interaction horizon.

The paper is organized as follows. Section~\ref{sec:setup} introduces the nonlocal state and flux formulations, the bond-based free-material problem, and its normalization. Section~\ref{sec:intermediate-material}  eliminates the normalized conductivity variable and proves the unified localization result for \(\theta_\delta\to\theta\in[0,+\infty)\), including the sparse and saturated endpoint identifications.
In Subsection~\ref{subsec:p-one}, we also record that the reduced state-energy localization extends to \(p=1\), with a BV-type limiting energy.
Section~\ref{sec:sparse-flux-recovery} establishes measure-valued flux recovery at the sparse endpoint. Section~\ref{sec:discussion} interprets the directional constitutive laws arising in the intermediate regime and compares them with local free-material optimization and classical composite bounds. Appendix~\ref{app:two-point} collects elementary facts about symmetric and antisymmetric two-point fields. Appendix~\ref{appx:loc_rlx} establishes the local sparse optimal design duality and its measure relaxation, while Appendix~\ref{app:uncapped-exact-localization} studies the conductivity-uncapped reference problem and proves its exact horizon-independence.

%% file: 02_setup.tex
\section{Problem setup}\label{sec:setup}

\subsection{Notation and conventions} 

Throughout the paper, \(\O\subset\Rn\), \(d\geq2\), is a connected bounded Lipschitz domain. We fix \(1<p<+\infty\) and denote its H{\"o}lder conjugate by \( q=p/(p-1). \) 
The norm of a finite-dimensional vector is denoted by \(|\cdot|\), while \(\|\cdot\|_X\) denotes the norm in a function space \(X\). 
We also write \(|A|\) for the Lebesgue measure of a measurable set \(A\). 
For \(r>0\), we denote by \(B_r(x)\) the open ball of radius \(r\) centred at \(x\) in the ambient Euclidean space, and write \(B_r =B_r(0)\). 
The ambient dimension will always be clear from the context.
We denote the unit sphere in \(\Rn\) by \( \mathbb{S}^{d-1}=\{s\in\Rn: |s|=1\}. \)
For a function or measure \(\phi\), we denote its support by \(\operatorname{spt}\phi\).
For a set \(A\subset\R^n\), we write \(\operatorname{cl}A\) for its closure.
The characteristic function of a measurable set \(A\) is denoted by \(\mathbf{1}_A\). 
For a subset \(A\) of a linear space, its convex indicator is 
\[ \iota_A(x) = \begin{cases} 0, & x\in A,\\ +\infty, & x\notin A. \end{cases} \] 
For \(t\in\R\), we set \( t_+=\max\{t,0\}. \)
We use the convention \(\overline{\R} = \R\cup \{+\infty\}\). 
We use \(\mathcal{L}^d\) for \(d\)-dimensional Lebesgue measure, \(\mathcal{H}^{d-1}\) 
for \((d-1)\)-dimensional Hausdorff measure, and \( \mathcal{M}(K;\Rn) \) for the space of finite \(\Rn\)-valued Radon measures on a compact set \(K\), equipped with the total-variation norm.
Integration with respect to \(\mathcal{L}^d\) is denoted by \(\mathrm{d}x\), and 
\(v\mathcal{L}^d\) denotes the measure with density \(v\) with respect to Lebesgue measure. 
We write \( \mathcal{D}(\O)=C_c^\infty(\O) \) and denote its distributional dual by \(\mathcal{D}'(\O)\).
The notation \( \langle\cdot,\cdot\rangle_\O \) is used for the relevant duality pairing
over \(\O\). 
When both arguments are integrable functions, it agrees with the usual integral pairing.
Every function defined on \(\O\) and prescribed to vanish outside \(\O\) is identified with its zero extension to \(\Rn\). 
In particular, \(\operatorname{Lip}_0(\O)\) denotes the space of globally Lipschitz functions on \(\Rn\) that vanish on \(\Rn\setminus\O\), and \(\operatorname{Lip}(u)\) denotes the global Lipschitz constant of \(u\). 
Unless stated otherwise, equalities and inequalities between measurable functions are understood almost everywhere. 
Strong, weak, and weak-star convergence are denoted respectively by 
\(\to\), \(\rightharpoonup\), \(\overset{*}{\rightharpoonup}.\)
For families of functionals, \(\xrightarrow{\Gamma}\) denotes \(\Gamma\)-convergence with respect to the topology specified in the corresponding statement.

\subsection{Bond-based peridynamic operators}\label{subsec:bb_ops}

For a given \emph{nonlocal horizon} \(\delta > 0\) let 
\(\Od = \O + B_\delta\) be the set of points in \(\Rn\), which can interact with points in \(\O\).
We also denote by 
\[ E_\delta = \left\{ (x,x')\in\Od^2: |x-x'|<\delta \right\}, \] 
the set of potentially active bonds.
Let the nonlocal boundary be \(\Gamma_\delta = \Od\setminus \O\).
The intensity of the nonlocal interactions as a function of the distance between two points will be described by a radial nonnegative function \(\rho_\delta: \Rn \to [0,+\infty)\).
We will assume that \(\supp \rho_\delta \subset B_\delta\), and the fact that \(\rho_\delta\) satisfies the normalization condition
\begin{equation}\label{eq:rho0}
    \int_{B_\delta} \rho_\delta(x)\,\dx = K_{d,p}^{-1}, \quad\text{where}\quad
    K_{d,p} = |\Sn|^{-1}\int_{\Sn} |e\cdot s|^p\,\mathrm{d}s
    =\frac{\Gamma(\frac{d}{2})\Gamma(\frac{p+1}{2})}{\pi^{1/2}\Gamma(\frac{d+p}{2})},
\end{equation}
and \(e\in\mathbb{S}^{d-1}\) is arbitrary.
For a function \(u:\O\to\R\) we define the \emph{nonlocal gradient} by
\begin{equation*}
    D^\rho_\delta u (x,x') = \frac{u_0(x)-u_0(x')}{|x-x'|}\rho_{\delta}^{1/p}(x-x'),
\end{equation*}
where \(u_0\) is the extension of \(u\) by \(0\) outside of \(\O\).
Note that \(D^\rho_\delta u (x,x') = 0\) whenever \(x-x' \not\in B_\delta\), or indeed whenever \(x \not\in \Od\) or \(x' \not\in \Od\).
We shall consider it as a linear, possibly unbounded operator \(D^\rho_\delta: D(D^\rho_\delta) \subset L^p(\O) \to L^p(\Od^2)\).
\(D^\rho_\delta\) shares several properties with the usual distributional gradient.
For example, it is densely defined in \(L^p(\O)\), because \(C^\infty_c(\O)\subset D(D^\rho_\delta)\).
The operator \(D^\rho_\delta\) is also closed. 
Indeed, if \(u_k\to u\) in \(L^p(\O)\) and \(D^\rho_\delta u_k\to v\) in \(L^p(\Od^2)\), then, after passing to a subsequence, both convergences hold almost everywhere, and the pointwise definition of \(D^\rho_\delta\) yields \(v=D^\rho_\delta u\) almost everywhere. Moreover \(D^\rho_\delta\) 
satisfies a nonlocal Poincar{\'e}-type inequality, see~\cite{bourgain2001another,ponce2004new,ponce2004estimate}, 
uniformly for small \(\delta>0\):
\begin{equation*}
    \exists C>0, \delta_0>0: \forall \delta \in (0,\delta_0), u\in D(D^\rho_\delta): \lVert u\rVert_{L^p(\O)} \leq C \lVert D^\rho_\delta u\rVert_{L^p(\Od^2)},
\end{equation*}
which makes \(D^\rho_\delta\) injective.
Its graph norm is therefore equivalent to \(\lVert D^\rho_\delta(\cdot)\rVert_{L^p(\Od^2)}\).
All this allows us to consider a Banach space \(X_\delta^p = (D(D^\rho_\delta), \lVert \cdot\rVert_{X_\delta^p} = \lVert D^\rho_\delta(\cdot)\rVert_{L^p(\Od^2)})\), which may be regarded as the peridynamic analogue of \(W^{1,p}_0(\O)\).

We can now define a \emph{nonlocal divergence} operator 
\[
\begin{aligned}
\operatorname{div}^\rho_\delta: 
D(\operatorname{div}^\rho_\delta) = \bigg\{\,
 &\sigma \in L^q(\Od^2) \colon \exists g \in L^q(\O), \text{such that\ }
 \\&\langle g, u \rangle_\O = -\iint_{\Od^2} \sigma D^\rho_\delta u\, \forall u \in X_\delta^p
\,\bigg\} \to L^q(\O),
\end{aligned}
\] 
as the negative adjoint of \(D^\rho_\delta\).
In other words, we define \(\operatorname{div}^\rho_\delta\) through the following integration by parts formula:
\begin{equation*}
    \langle \operatorname{div}^\rho_\delta\sigma, u\rangle_{\O} = -\iint_{\Od^2} \sigma D^\rho_\delta u\dx'\dx, \qquad\forall (\sigma,u) \in D(\operatorname{div}^\rho_\delta)\times D(D^\rho_\delta).
\end{equation*}
The operator \(\operatorname{div}^\rho_\delta\) is closed (as an adjoint operator);
surjective (owing to the closed-range theorem~\cite[Theorem~2.21]{brezis2010functional} and
the nonlocal Poincar{\'e} inequality); and densely defined.
Indeed, for each sufficiently regular \(\sigma\), for example \(\sigma \in C^\infty_c(\Od^2)\), we have an explicit expression for  \(\operatorname{div}^\rho_\delta \sigma\):
\begin{equation*}
    \begin{aligned}
        &-\iint_{\Od^2} \sigma D^\rho_\delta u\dx'\dx
    =\int_{\O} u(x) \underbrace{\int_{\Od} \frac{\sigma(x',x)-\sigma(x,x')}{|x-x'|}\rho_{\delta}^{1/p}(x-x')\,\mathrm{d}x'}_{=\operatorname{div}^\rho_\delta\sigma(x)}\,\mathrm{d}x,
    \quad \forall u \in C^\infty_c(\O).
    \end{aligned}
\end{equation*}
Note that the formula above is warranted because, for smooth \(\sigma\),
\[|\sigma(x',x)-\sigma(x,x')|\leq \sqrt{2}\lVert\nabla\sigma\rVert_{L^\infty(\Od^2;\R^{2d})}|x-x'|,\]
and consequently
\[
|\operatorname{div}^\rho_\delta\sigma(\cdot)| \leq \sqrt{2}\lVert\nabla\sigma\rVert_{L^\infty(\Od^2;\R^{2d})}
\int_{B_\delta}\rho_{\delta}^{1/p}(z)\,\mathrm{d}z
\in L^q(\O),
\]
since
\[
\int_{B_\delta}\rho_{\delta}^{1/p}(z)\,\mathrm{d}z \leq |B_\delta|^{1/q}
\bigg(\int_{B_\delta}\rho_{\delta}(z)\,\mathrm{d}z\bigg)^{1/p} = |B_\delta|^{1/q}K_{d,p}^{-1/p},
\]
owing to H{\"o}lder's inequality and the normalization of \(\rho_\delta\).

With this in mind we define the Banach space \(Y_\delta^q = (D(\operatorname{div}^\rho_\delta), \lVert\cdot \rVert_{Y_\delta^q} = \lVert \cdot\rVert_{L^q(\Od^2)} + \lVert \operatorname{div}^\rho_\delta(\cdot)\rVert_{L^q(\O)})\), which is a peridynamic analogue of the distributional \(W^q(\Div;\O)\) spaces. 
For a given fixed source \(f\in L^q(\O)\) we will denote by \(Y_\delta^q(f) = \{\, \sigma \in Y_\delta^q \colon \operatorname{div}^\rho_\delta \sigma = f\,\}\) a nonempty closed affine subspace  of \(Y_\delta^q\).

\subsection{Nonlocal state and flux problems for a fixed conductivity}

We next formulate the primal and complementary variational problems for a fixed distribution
of bond conductivities.
These problems provide the state equation and flux formulation underlying the free-material
optimization problem introduced below in Subsection~\ref{subsec:FMO}.

For a measurable set \(S\subset\Rn\times\Rn\), let 
\(S^{\mathrm T}=\{(x,x')\in\Rn\times\Rn: (x',x)\in S\}. \)
For a measurable two-point field \(\phi\), we write \( \phi^{\mathrm T}(x,x')=\phi(x',x). \) 
On a symmetric measurable set \(S=S^{\mathrm T}\), we denote by 
\(L^r_{s}(S)\) and \(L^r_{a}(S)\) the closed subspaces of symmetric and antisymmetric
\(L^r(S)\)-fields, respectively: 
\[ 
  L^r_{\sym}(S) = \{\phi\in L^r(S):\phi^{\mathrm T}=\phi\}, \qquad 
  L^r_{\asym}(S) = \{\phi\in L^r(S):\phi^{\mathrm T}=-\phi\}. 
\]
Further properties of these spaces are collected in Appendix~\ref{app:two-point}.

Let \(0\leq \underline{\kappa} < \overline{\kappa} < \infty\) be the lower and the upper bounds on the conductivities of peridynamic bonds.
We define the set of admissible bond conductivities to be
\begin{equation*}
    \begin{aligned}
    \kadm{(\delta,\underline{\kappa},\overline{\kappa})} &= \{\, \kappa \in L^1(\Od^2)
    \colon \underline{\kappa} \leq \kappa(x,x') \leq \overline{\kappa}, \text{a.e.\ in\ } \Od^2,
    \,\},\\
    \kadm^{\sym}{(\delta,\underline{\kappa},\overline{\kappa})} &= \kadm{(\delta,\underline{\kappa},\overline{\kappa})} \cap L^1_{\rm s}(\Od^2).
    \end{aligned}
\end{equation*}
Although the physical design problem will be posed over symmetric conductivities, 
we retain the larger class \(\kadm{(\delta,\underline{\kappa},\overline{\kappa})}\). 
It will be useful when eliminating the conductivity variable, where symmetry of the 
optimal conductivity will follow from the structure of the optimal flux.
Further, for \(\kappa \in \kadm^{\sym}{(\delta,\underline{\kappa},\overline{\kappa})}\) we define the convex nonlocal potential and complementary energies of such a system:
\begin{equation}\label{eq:energies}
    \begin{aligned}
        \mathcal{P}_\delta(\kappa,u) &= \frac{1}{p} \iint_{\Od^2} \kappa(x,x') |D^\rho_\delta u(x,x')|^p \,\dx'\dx - \langle f,u\rangle_{\O}, \qquad\text{and}\\
        \mathcal{C}_\delta(\kappa,\sigma) &= \frac{1}{q} \iint_{\Od^2} \kappa^{1-q}(x,x') |\sigma(x,x')|^q \,\dx'\dx,\\
    \end{aligned}
\end{equation}
where we understand \(a^{1-q} b^q\) as \(\iota_{\{0\}}(b)\) for \(a=0\).
We now define two  variational principles, related via convex duality, characterizing the nonlinear peridynamic diffusion for a given distribution of bond properties~\cite{schytt2024dual}:
\begin{equation}\label{eq:var_pp}
    \begin{aligned}
    c_\delta(\kappa) = \inf_{\sigma \in Y_\delta^q(f)} \mathcal{C}_{\delta}(\kappa,\sigma) 
    = -\inf_{u \in X_\delta^p} \mathcal{P}_\delta(\kappa,u).
    \end{aligned}
\end{equation}
Since \(|D^\rho_\delta u|^p\) is symmetric, \(\mathcal{P}_\delta(\kappa,u)\) depends only on the symmetric
part \(\kappa_{s} =\frac{1}{2}(\kappa+\kappa^T) \) of \(\kappa\); see 
Lemma~\ref{lem:symmetric-antisymmetric-fields}. 
Therefore, restricting the admissible conductivities to \(L^1_{s}(\Od^2)\) entails no loss
of generality.

The following result establishes well-posedness of the fixed-conductivity problems and
allows optimal fluxes to be chosen antisymmetric.
For details, we refer to~\cite{schytt2024dual}.
\begin{proposition}\label{prop:existence_varpinc}
    Let \(\kappa\in \kadm^{\sym}{(\delta,\underline{\kappa},\overline{\kappa})}\) and \(f\in L^q(\O)\) be arbitrary.
    The following statements about the variational principles~\eqref{eq:var_pp} hold:
    1. The complementary energy attains its infimum at a unique point, which is \emph{antisymmetric}.
    2. The potential energy attains its infimum at a unique point, provided additionally that \(\kappa \geq \underline{\kappa}>0\).
\end{proposition}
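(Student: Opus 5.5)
The argument rests on the direct method of the calculus of variations in the reflexive spaces \(Y_\delta^q\) and \(X_\delta^p\), combined with strict convexity for uniqueness and an explicit symmetrization for antisymmetry.

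\emph{Part 1, existence.} The constraint set \(Y_\delta^q(f)\) is a nonempty closed affine subspace of \(Y_\delta^q\); it is nonempty because \(\operatorname{div}^\rho_\delta\) is surjective. The map \(\sigma\mapsto\mathcal C_\delta(\kappa,\sigma)\) is convex and lower semicontinuous on \(L^q(\Od^2)\): its integrand \((a,b)\mapsto a^{1-q}|b|^q\) is a perspective-type function, convex and lsc, with the convention \(\iota_{\{0\}}(b)\) at \(a=0\), so Fatou's lemma applies. Since \(\kappa\le\overline\kappa\), we have \(\mathcal C_\delta\ge \frac{1}{q}\overline\kappa^{1-q}\|\sigma\|_{L^q}^q\), so the functional is coercive in \(L^q\). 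On \(Y_\delta^q(f)\) the divergence is fixed, so \(\|\sigma\|_{Y^q_\delta}=\|\sigma\|_{L^q}+\|f\|_{L^q}\), and a minimizing sequence is bounded in \(L^q(\Od^2)\). Extract a weakly convergent subsequence. The weak limit stays in \(Y_\delta^q(f)\): the identity \(\langle f,u\rangle_\O=-\iint\sigma D^\rho_\delta u\) passes to the weak limit for each fixed \(u\in X^p_\delta\). Weak lsc of the convex lsc functional then gives a minimizer. The infimum is finite, for instance because of the duality \eqref{eq:var_pp}, or because \(\underline\kappa=0\) only matters on sets where some admissible \(\sigma\) vanishes. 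This finiteness is the delicate point, and I would quote \cite{schytt2024dual} for it.

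\emph{Part 1, uniqueness.} On \(\{\kappa>0\}\) the map \(b\mapsto\kappa^{1-q}|b|^q\) is strictly convex, and on \(\{\kappa=0\}\) every finite-energy \(\sigma\) vanishes. Hence two minimizers coincide a.e.: otherwise their midpoint would lie in the affine set with strictly smaller energy.

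\emph{Part 1, antisymmetry.} Given the minimizer \(\sigma\), consider \(\tilde\sigma=-\sigma^{\mathrm T}\). Since \(D^\rho_\delta u\) is antisymmetric,
\[
\iint\tilde\sigma D^\rho_\delta u=-\iint\sigma^{\mathrm T}D^\rho_\delta u=\iint\sigma D^\rho_\delta u ,
\]
so \(\tilde\sigma\in Y_\delta^q(f)\). Because \(\kappa\) is symmetric, \(\mathcal C_\delta(\kappa,\tilde\sigma)=\mathcal C_\delta(\kappa,\sigma)\). Uniqueness forces \(\tilde\sigma=\sigma\), that is, \(\sigma^{\mathrm T}=-\sigma\). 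Alternatively, the antisymmetric part \(\frac12(\sigma-\sigma^{\mathrm T})\) is admissible and, by convexity, has no larger energy.

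\emph{Part 2.} Assume \(\kappa\ge\underline\kappa>0\). Then
\[
\mathcal P_\delta(\kappa,u)\ge \frac{\underline\kappa}{p}\|u\|_{X^p_\delta}^p-\|f\|_{L^q}\|u\|_{L^p}\ge \frac{\underline\kappa}{p}\|u\|_{X^p_\delta}^p-C\|f\|_{L^q}\|u\|_{X^p_\delta}
\]
by the nonlocal Poincaré inequality, so \(\mathcal P_\delta\) is coercive on \(X^p_\delta\). The space \(X^p_\delta\) is reflexive: it is isometric via \(D^\rho_\delta\) to a subspace of \(L^p(\Od^2)\) that is closed, since \(D^\rho_\delta\) is a closed operator. \(\mathcal P_\delta\) is convex and continuous, hence weakly lsc, and a minimizer exists. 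For uniqueness, \(\kappa>0\) and strict convexity of \(|\cdot|^p\) for \(p>1\) show that two minimizers have equal \(D^\rho_\delta u\) a.e. Injectivity of \(D^\rho_\delta\) then gives equality of the minimizers.
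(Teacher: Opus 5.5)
The paper gives no proof of this proposition and defers to \cite{schytt2024dual}, so your self-contained direct-method argument has to be judged on its own terms. Most of it is sound:
\(Y_\delta^q(f)\) is nonempty and weakly closed, and \(\sigma\mapsto\mathcal C_\delta(\kappa,\sigma)\) is convex and, via Fatou, weakly lower semicontinuous. The bound \(\kappa\leq\overline\kappa\) gives coercivity. Strict convexity on \(\{\kappa>0\}\), together with forced vanishing on \(\{\kappa=0\}\), gives uniqueness, and the transposition \(\sigma\mapsto-\sigma^{\mathrm T}\) gives antisymmetry.

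The step that fails is the finiteness of the infimum. It cannot be fixed by a citation, because it is false when \(\underline\kappa=0\). Take \(\kappa\equiv0\in\kadm^{\sym}(\delta,0,\overline\kappa)\) and \(f\neq0\). Finite complementary energy forces \(\sigma=0\) a.e., which has divergence \(0\neq f\), so \(\mathcal C_\delta(0,\sigma)=+\infty\) for every \(\sigma\in Y_\delta^q(f)\). Then \(c_\delta(0)=+\infty\) is ``attained'' by every feasible flux (or by none, depending on convention), and uniqueness fails.

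Neither of your justifications rules this out. The duality \eqref{eq:var_pp} is consistent with this example, since \(\inf_u\mathcal P_\delta(0,u)=-\sup_u\langle f,u\rangle_\O=-\infty\). Your assertion that \(\{\kappa=0\}\) only matters where some admissible flux vanishes is not true for an arbitrary admissible \(\kappa\).

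Consequently, three of your steps require \(c_\delta(\kappa)<+\infty\): boundedness of a minimizing sequence, the midpoint argument for uniqueness, and the antisymmetry argument built on uniqueness. This condition must enter part~1 as a hypothesis rather than be proved. It holds automatically when \(\underline\kappa>0\), since every \(\sigma\) in the nonempty set \(Y_\delta^q(f)\) then satisfies \(\mathcal C_\delta(\kappa,\sigma)\leq\frac1q\underline\kappa^{1-q}\|\sigma\|_{L^q(\Od^2)}^q<+\infty\). It also holds for the optimal conductivities to which the paper later applies the proposition, because the design value is finite when \(\gamma>0\) (the constant \(\kappa\equiv\gamma\) is admissible).

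With that proviso, your proof of part~1 is complete, and part~2 is correct. One small addendum: the range of \(D^\rho_\delta\) is closed, and hence \(X_\delta^p\) is reflexive, because of closedness of the operator \emph{and} the nonlocal Poincar\'e inequality, not closedness alone.
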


\subsection{Free material optimization for bond-based peridynamic diffusion}
\label{subsec:FMO}

Although free-material optimization allows a wide variety of objective functionals and design constraints, we focus here on compliance minimization under a global material constraint. 
Equivalently, we minimize the complementary energy over the bond conductivity and the associated divergence-feasible flux.
For fixed \(\delta>0\), bounds \(0\leq\underline{\kappa}<\overline{\kappa}<+\infty\), 
and \(\gamma\in[\underline\kappa,\overline\kappa]\), we define 
\begin{equation}\label{eq:kappa_adm}
    \begin{aligned}
    \kadm{(\delta,\gamma,\underline{\kappa},\overline{\kappa})} &= 
    \left\{\, \kappa \in \kadm{(\delta,\underline{\kappa},\overline{\kappa})} \colon
    \iint_{\Od^2} \kappa \leq \gamma |\Od|^2\,\right\},
    \qquad\text{and}\\
    \kadm^{\sym}{(\delta,\gamma,\underline{\kappa},\overline{\kappa})} &= \kadm{(\delta,\gamma,\underline{\kappa},\overline{\kappa})} \cap L^1_{\rm s}(\Od^2).
    \end{aligned}
\end{equation}
The parameter \(\gamma\) prescribes the average amount of conductivity available over 
\(\Od^2\).
 Since only the bonds in \(E_\delta\) contribute to the state and complementary energies, 
 it is useful to introduce the dimensionless relative material fraction 
 \begin{equation} \label{eq:relative-material-fraction} 
    \theta_{\delta,\gamma} = \frac{ \gamma|\Od|^2 }{ \overline{\kappa}|E_\delta| }. 
\end{equation}
When \(\underline\kappa=0\) and \(0<\theta_{\delta,\gamma}<1\), this quantity is the
fraction of the potentially active bond set that could be assigned the maximal conductivity 
\(\overline\kappa\). 
If \(\theta_{\delta,\gamma}\geq1\), the available material is sufficient to saturate 
every active bond, and the global material constraint is no longer active.

Problems of this type occur frequently in the structural optimization literature~\cite{allaire2012shape,cherkaev2012variational,bendsoe2013topology}.
They serve both as a simple prototype amenable to theoretical analysis, and an important practical problem in the context of linear elasticity corresponding to finding the stiffest solid given a limited amount of material -- hence the name \emph{compliance minimization}.
The resulting problem can be written as a convex minimization problem with respect to \((\kappa,\sigma)\) or a saddle-point problem with respect to \((\kappa,u)\):
\begin{equation}\label{eq:prob1}
    \begin{aligned}
    i{(\delta,\gamma,\underline{\kappa},\overline{\kappa})}=
    \inf_{\substack{\kappa \in \kadm^{\sym}{(\delta,\gamma,\underline{\kappa},\overline{\kappa})}\\
    \sigma\in Y_\delta^q(f)}} \mathcal{C}_\delta(\kappa,\sigma)
    =
    -\sup_{\kappa \in \kadm^{\sym}{(\delta,\gamma,\underline{\kappa},\overline{\kappa})}} \inf_{u \in X_\delta^p} \mathcal{P}_\delta(\kappa,u).
    \end{aligned}
\end{equation}
We note that the infimum of the complementary energy is attained, see~\cite[Theorem~3.1]{schytt2024dual}.
Moreover, for every optimal conductivity, the associated optimal flux \(\sigma\) is 
antisymmetric, see Proposition~\ref{prop:existence_varpinc} and~\cite{schytt2024dual}.
In the PDE context, a similar saddle-point problem has been studied in~\cite{cea1970example}.

In the asymptotic analysis, both the horizon \(\delta\) and the material budget 
\(\gamma=\gamma_\delta\) may vary. 
The limiting behavior will be organized by the relative material fraction 
\( \theta_\delta = \theta_{\delta,\gamma_\delta}. \) 
No particular relation between \(\gamma_\delta\) and \(\delta\) is imposed at this stage.

\subsection{Standing assumptions and normalized variables}\label{subsec:rescaling}
In the forthcoming asymptotic analysis, we will make the following assumptions
about the nonlocal kernel and the material constants.
\begin{enumerate}[label=(A\arabic*)]
    \item \label{A1}
    The lower conductivity vanishes:
     \(\underline{\kappa}=0\), so that admissible conductivities may vanish on 
    inactive bonds outside of \(E_\delta\);\footnote{We note that a positive lower bound \(\underline{\kappa}_\delta\) converging sufficiently rapidly to zero as \(\delta\downarrow0\) is also possible.}
    \item \label{A2} 
    The interaction kernel is uniform within the nonlocal horizon-ball:
    \(\rho_\delta(x) = \hat{\rho}_\delta \boldsymbol{1}_{B_\delta}(x)\), where \(\hat{\rho}_\delta = [K_{d,p}|B_\delta|]^{-1}\); see~\eqref{eq:rho0}.
\end{enumerate}
Assumption \textnormal{(A2)} aligns the interaction measure with the unweighted bond-material constraint.
In particular, after the normalization below, material occupation is measured with respect to the uniform probability measure on the active bond set. 
For a general nonconstant radial kernel, the material design could also select favorable bond lengths, and an additional radial variable could persist in the limiting problem.

We equip the active bond set \(E_\delta\) with the uniform probability measure 
\[ \mathrm{d}\lambda_\delta(x,x') = \frac{\mathbf{1}_{E_\delta}(x,x')}{|E_\delta|} \dx'\dx. \] 
Unless stated otherwise, all function spaces over \(E_\delta\) below are taken with respect to \(\lambda_\delta\).

We will now simultaneously perform several normalizations.
First, we introduce the normalized state and flux variables:
\begin{equation*}
  \widetilde{u} = (\gamma\hat{\rho}_{\delta})^{q-1}u,
  \qquad\text{and}\qquad
  \widetilde{\sigma} = \widehat{\rho}_\delta^{1/p}|E_\delta|\sigma.
\end{equation*}
Then, we rescale the nonlocal operators as:
\begin{equation*}\label{eq:rescaled_operators}
    \begin{aligned}
        D_\delta
         &= \hat{\rho}_\delta^{-1/p}D^\rho_\delta, &{}\qquad \text{that is} \qquad 
         D^\rho_\delta u &= \hat{\rho}_{\delta}^{1/p} (\gamma\hat{\rho}_{\delta})^{1-q}D_\delta \widetilde{u},\\
    \operatorname{div}_\delta &= \hat{\rho}_\delta^{-1/p}|E_\delta|^{-1}\operatorname{div}^\rho_\delta,  &{}\qquad\text{so that} \qquad \operatorname{div}^\rho_\delta\sigma &= \operatorname{div}_\delta\widetilde{\sigma}.
    \end{aligned}
\end{equation*} 
Since we are simply scaling a linear operator with a positive constant (for each fixed \(\delta\)), this does not change its properties such as the domain, range, closedness, injectivity, or surjectivity. 
Therefore, we retain the symbols \(X_\delta^p\) and \(Y_\delta^q\) for the domains of the normalized operators, now equipped with the norms \( \|u\|_{X_\delta^p} = \|D_\delta u\|_{L^p(E_\delta,\lambda_\delta)} \) and \(\|\sigma\|_{Y_\delta^q} = \|\sigma\|_{L^q(E_\delta,\lambda_\delta)} + \|\operatorname{div}_\delta\sigma\|_{L^q(\O)}. \) 
The normalized state space \(X_\delta^p\) has the same underlying elements as its counterpart in Subsection~\ref{subsec:bb_ops}.
For the flux space, we identify fields that agree on \(E_\delta\), or, equivalently, identify fields on \(E_\delta\) with their zero extensions to \(\Od^2\). 
Under this identification, the normalized spaces have the same underlying elements as their counterparts in Subsection~\ref{subsec:bb_ops}.
As before, we put
\(Y_\delta^q(f) = \left\{ \sigma\in Y_\delta^q: \operatorname{div}_\delta\sigma=f \right\}. \)
Note that the new duality pairing between \(D_\delta\) and \(\operatorname{div}_\delta\) is now expressed using \(\lambda_\delta\):
\[
  \langle \operatorname{div}_\delta \widetilde{\sigma}, u\rangle_\O
  = - \iint_{E_\delta} \widetilde{\sigma} D_\delta u \,\mathrm{d}\lambda_\delta.
\]
Finally, we rescale the conductivities.
To this end, we recall the quantity \(\theta_{\delta,\gamma}\) defined in~\eqref{eq:relative-material-fraction}, which quantifies the ratio between the total available 
conductivity and the conductivity required to assign the maximal value \(\overline\kappa\) to every bond in \(E_\delta\).
In particular, when \(0<\theta_{\delta,\gamma}<1\), it is the fraction of the active bond set that could be assigned the maximal conductivity, whereas \(\theta_{\delta,\gamma}\geq1\) means that all potentially active bonds can be saturated.
Rather than retaining the dimensional conductivity as the design variable, we introduce its occupation fraction 
\[ a(x,x') = \frac{\kappa(x,x')} {\overline{\kappa}}. \] 
The pointwise conductivity bound and the global material constraint then become \( 0\leq a\leq 1\) and \(\int_{E_\delta} a\,\mathrm{d}\lambda_\delta \leq\theta_{\delta,\gamma}. \) 
For \(\vartheta \geq 0\), we therefore define the admissible occupation class 
\begin{equation*} 
  \begin{aligned} 
  \mathcal{A}_{\delta,\vartheta} = 
  \left\{ a\in L^\infty(E_\delta): 
  0\leq a\leq1,\quad 
  \int_{E_\delta} a\,\mathrm{d}\lambda_\delta \leq\vartheta \right\},
  \qquad \mathcal{A}_{\delta,\vartheta}^{\sym}= \mathcal{A}_{\delta,\vartheta}\cap L^\infty_\sym(E_\delta).
\end{aligned} \end{equation*}
As in~\eqref{eq:kappa_adm}, the superscript \({\sym}\) indicates symmetry with 
respect to interchange of the bond endpoints.

Combining all the scalings, we get
\begin{equation*}
    \begin{aligned}
  (\gamma\hat{\rho}_\delta)^{q-1}
  \mathcal{P}_\delta(\overline{\kappa}a,(\gamma\hat{\rho}_{\delta})^{1-q}\widetilde{u})
  &=
  \frac{1}{p}\underbrace{\frac{\overline{\kappa}|E_\delta|}{\gamma}}_{=\frac{|\Od|^2}{\theta_{\delta,\gamma}}}\iint_{E_\delta} a |D_\delta\widetilde{u}|^p\,\mathrm{d}\lambda_\delta
  - \langle f, \widetilde{u} \rangle_\O, \qquad\text{and}\\
  (\gamma\hat{\rho}_\delta)^{q-1} \mathcal{C}_\delta(\overline{\kappa}a,\hat{\rho}_\delta^{-1/p}|E_\delta|^{-1}\widetilde{\sigma}) &= 
  \frac{1}{q}\left(\frac{\gamma}{\overline{\kappa}|E_\delta|}\right)^{q-1} \iint_{E_\delta} a^{1-q} |\widetilde{\sigma}|^q\,\mathrm{d}\lambda_\delta.
    \end{aligned}
\end{equation*}
From this point onward, all state and flux variables are understood in the normalized sense, and we omit the tildes.
For \(\vartheta > 0\) we will thus write
\begin{equation*}
  \begin{aligned}
\mathcal{P}_{\delta,\vartheta}(a,u) &= \frac{1}{p}\frac{|\Od|^2}{\vartheta}
\int_{E_\delta} a|D_\delta u|^p \,\mathrm{d}\lambda_\delta
- \langle f, u \rangle_\O, &\qquad u &\in X_\delta^p,\qquad\text{and}\\
\mathcal{C}_{\delta,\vartheta}(a,\sigma) &= \frac{1}{q}\left(\frac{|\Od|^2}{\vartheta}\right)^{1-q}
\int_{E_\delta} a^{1-q} |\sigma|^q\,\mathrm{d}\lambda_\delta,
&\qquad\sigma &\in Y^q_\delta.
  \end{aligned}
\end{equation*}
Similarly, the rescaled optimal value for the peridynamic free-material optimization problem is
\begin{equation} \label{eq:rescaled_itheta}
 \begin{aligned} i_\delta(\vartheta) &= 
  \inf_{(a,\sigma)\in\mathcal{A}_{\delta,\vartheta}^{\sym}\times Y_\delta^q(f)} \mathcal{C}_{\delta,\vartheta}(a,\sigma) = - \sup_{a\in\mathcal{A}_{\delta,\vartheta}^{\sym}} \inf_{u\in X_\delta^p} \mathcal{P}_{\delta,\vartheta}(a,u). 
\end{aligned}
\end{equation}
For \(\vartheta=\theta_{\delta,\gamma}\), the normalized value is thus related to the original free-material value by \( i_\delta(\theta_{\delta,\gamma}) = (\gamma\widehat\rho_\delta)^{q-1} i(\delta,\gamma,0,\overline\kappa). \)

For the asymptotic analysis, we will consider a prescribed family 
\((\theta_\delta)\subset(0,+\infty)\) and abbreviate \( i_\delta =i_\delta(\theta_\delta)\) and \(\mathcal{A}_\delta^{\sym} = \mathcal{A}_{\delta,\theta_\delta}^{\sym}. \)
All subsequent material variables are occupation densities \(a\), unless a 
conductivity \(\kappa\) is explicitly reintroduced for comparison with the physical
formulation.
Finally, we fix \(\delta_*>0\) sufficiently small and restrict the asymptotic analysis
to \(0<\delta\leq\delta_*\). 
Whenever a common ambient domain is needed, \(Q\) denotes a fixed bounded Lipschitz domain satisfying 
\(Q\supset\operatorname{cl}\O_{\delta_*}. \)

For later use, we record the following identity.
\begin{proposition}\label{prop:Ed_over_Bd}
  \[\lim_{\delta\downarrow 0}\frac{|E_\delta|}{|B_\delta|} = |\O|.\]
\end{proposition}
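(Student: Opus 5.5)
We write \(|E_\delta|\) as an integral via Fubini and squeeze it between two quantities that are each asymptotic to \(|\O|\,|B_\delta|\). By definition,
\[
|E_\delta| = \int_{\Od} |\{x'\in\Od : |x-x'|<\delta\}|\,\dx = \int_{\Od}|\Od\cap B_\delta(x)|\,\dx .
\]
For the upper bound we use \(|\Od\cap B_\delta(x)|\le |B_\delta|\), which gives
\[
\frac{|E_\delta|}{|B_\delta|}\le |\Od|.
\]
Since \(\O\) is bounded, \(\Od=\O+B_\delta\) decreases to \(\closure\O\) as \(\delta\downarrow0\). Continuity of measure from above for sets of finite measure then gives \(|\Od|\to|\closure\O|\).

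For the lower bound we restrict to the inner set \(\O^{\delta}=\{x\in\O:\operatorname{dist}(x,\Rn\setminus\O)>\delta\}\). For \(x\in\O^\delta\) we have \(B_\delta(x)\subset\O\subset\Od\), so
\[
\frac{|E_\delta|}{|B_\delta|}\ge |\O^\delta|.
\]
The sets \(\O^\delta\) increase to \(\O\) as \(\delta\downarrow0\), because \(\O\) is open. Monotone convergence therefore gives \(|\O^\delta|\to|\O|\).

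Combining the two bounds yields
\[
|\O|\le\liminf_{\delta\downarrow0}\frac{|E_\delta|}{|B_\delta|}\le\limsup_{\delta\downarrow0}\frac{|E_\delta|}{|B_\delta|}\le|\closure\O|.
\]

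The only step that is not routine is showing \(|\partial\O|=0\), so that \(|\closure\O|=|\O|\) and the two bounds meet. This is where the Lipschitz assumption enters. We cover \(\partial\O\) by finitely many coordinate cylinders, which is possible by compactness. In each cylinder \(\partial\O\) is the graph of a Lipschitz function, and such a graph has zero \(d\)-dimensional Lebesgue measure by Fubini, since each vertical fibre meets it in a single point. A finite union of null sets is null, so \(|\partial\O|=0\), and the squeeze gives the claimed limit \(|E_\delta|/|B_\delta|\to|\O|\).
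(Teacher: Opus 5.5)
Your proof is correct and is essentially the paper's argument: both compute $|E_\delta|$ by Tonelli and squeeze the ratio between $|\O|$ and $|\Od|$. The paper slices in the displacement $h$ and gets the lower bound $|\O|$ directly from $\O\subset\Od\cap(\Od-h)$. In your slicing the same shortcut is available, since $B_\delta(x)\subset\O+B_\delta=\Od$ for every $x\in\O$, so the inner set $\O^\delta$ and the monotone-convergence step are unnecessary. You also make explicit that $|\partial\O|=0$, which is needed for $|\Od|\to|\O|$ and which the paper uses without comment.
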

\begin{proof}
  A change of variables and Tonelli's theorem give: 
  \[\begin{aligned}
  |E_\delta| &= 
  \int_{\Od}\int_{\Od} \mathbf{1}_{B_{\delta}}(x'-x)\,\mathrm{d}x'\,\mathrm{d}x
  =
  \int_{\Od}\int_{\Od-x} \mathbf{1}_{B_{\delta}}(h)\,\mathrm{d}h\,\mathrm{d}x
  \\&= 
  \int_{\Od}\int_{B_\delta} \mathbf{1}_{\Od}(x+h)\,\mathrm{d}h\,\mathrm{d}x=
  \int_{B_\delta}\int_{\Od} \mathbf{1}_{\Od}(x+h)\,\mathrm{d}x\,\mathrm{d}h\\&=
  \int_{B_\delta} |\Od\cap(\Od-h)|\,dh. \end{aligned}\] 
  Owing to the inclusions
  \[\O \subset \Od\cap (\Od-h) \subset \Od,\]
  we have the convergence 
  \[ 
    \frac{|E_\delta|}{|B_\delta|} \to|\O|. \qedhere
  \] 
\end{proof}

Although the state--flux formulation above is restricted to \(1<p<+\infty\), the reduced state energies introduced in Section~\ref{sec:intermediate-material} admit a natural endpoint continuation to \(p=1\); see Subsection~\ref{subsec:p-one}.

%% file: 03_gamma.tex
\section{Reduced state energies and their localization} 
\label{sec:intermediate-material} 
In this section we consider a family of dimensionless material fractions
\begin{equation*} 
  \theta_\delta\to\theta \qquad\text{for some }\theta\in[0,+\infty). 
\end{equation*}
Our aim is to identify the corresponding local limits of the reduced state energies defined below.

\subsection{Elimination of the occupation density} \label{subsec:intermediate-reduction} 
For a nonnegative function \(g \in L^1(E_\delta,\lambda_\delta)\), we denote by \(\mathcal{T}_{\delta,\vartheta}(g)\) its upper-tail functional of mass \(\vartheta \geq 0\):
\begin{equation} \label{eq:nonlocal-upper-tail} 
  \mathcal{T}_{\delta,\vartheta}(g) = \sup_{ a\in \mathcal{A}_{\delta,\vartheta}} 
  \int_{E_\delta}a g\,\mathrm{d}\lambda_\delta, 
\end{equation}
and for \(\vartheta>0\) we let
\begin{equation} \label{eq:intermediate-energy} 
  \mathcal{E}_{\delta,\vartheta}(u) = 
  \begin{cases}\displaystyle\frac{|\O_\delta|^2}{p\vartheta} \mathcal{T}_{\delta,\vartheta} \bigl( |D_\delta u|^p \bigr),& u \in X_\delta^p,\\[2mm]
    +\infty,&\text{otherwise}.\end{cases}
\end{equation} 

\begin{lemma}[Dual representation of the upper-tail functional] \label{lem:upper-tail-duality} 
  Let \((Z,\lambda)\) be a probability space, \(g\in L^1(Z,\lambda)\), \(g\geq0\), and \(\vartheta\geq 0\). 
  Then 
  \begin{equation} \label{eq:dual_tail}
    \sup_{\substack{ a\in L^\infty(Z)\\ 0\leq a\leq1\\ \int_Za\,\mathrm{d}\lambda\leq\vartheta }} \int_Z ag\,\mathrm{d}\lambda 
    = \inf_{t\geq0} \left\{ \vartheta t+ \int_Z(g-t)_+\,\mathrm{d}\lambda \right\}. 
  \end{equation} 
  Moreover, the dual infimum is attained whenever \(\vartheta>0\).
\end{lemma}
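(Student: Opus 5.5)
The plan is to prove weak duality by an elementary pointwise inequality, and then close the gap with an explicit bathtub (level-set) construction.

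\emph{Weak duality.} Fix an admissible \(a\) and any \(t\geq0\). Since \(0\leq a\leq 1\) and \((g-t)_+\geq0\), we have pointwise
\[
ag = at + a(g-t)\leq at + (g-t)_+ .
\]
Integrating and using \(\int_Z a\,\mathrm d\lambda\leq\vartheta\) together with \(t\geq0\) gives \(\int_Z ag\,\mathrm d\lambda\leq \vartheta t+\int_Z(g-t)_+\,\mathrm d\lambda\). Taking the supremum over \(a\) and the infimum over \(t\) yields ``\(\leq\)''.

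\emph{Reverse inequality by construction.} Let \(F(t)=\lambda(\{g>t\})\). This function is nonincreasing and right-continuous, with \(F(t)\to0\) as \(t\to\infty\) since \(g\in L^1\). If \(\vartheta\geq F(0)\), take \(t=0\) and \(a=\mathbf 1_{\{g>0\}}\). This \(a\) is admissible and attains \(\int_Z g\,\mathrm d\lambda=\int_Z(g-0)_+\,\mathrm d\lambda\), so equality holds and the infimum is attained at \(t=0\). This case covers \(\vartheta=0\) whenever \(g=0\) a.e.

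If instead \(0<\vartheta<F(0)\), let
\[
t^*=\inf\{t\geq0:F(t)\leq\vartheta\}.
\]
Then \(t^*>0\) is finite, \(\lambda(\{g>t^*\})\leq\vartheta\leq\lambda(\{g\geq t^*\})\). Define
\[
a=\mathbf 1_{\{g>t^*\}}+c\,\mathbf 1_{\{g=t^*\}},\qquad c\in[0,1],
\]
with \(c\) chosen so that \(\int_Z a\,\mathrm d\lambda=\vartheta\); when \(\lambda(\{g=t^*\})=0\) any \(c\) works. For this \(a\) and \(t=t^*\) the pointwise inequality from weak duality is an equality: \(a=1\) where \(g>t^*\), \(g-t^*=0\) where \(0<a<1\), and \(a=0\) where \(g<t^*\). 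The integral constraint is also active, so \(\int_Z ag\,\mathrm d\lambda=\vartheta t^*+\int_Z(g-t^*)_+\,\mathrm d\lambda\). This proves equality and shows that \(t^*\) attains the dual infimum.

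\emph{The case \(\vartheta=0\) with \(F(0)>0\).} Here only \(a=0\) is admissible, so the left side is \(0\). The right side is \(\inf_t\int_Z(g-t)_+\,\mathrm d\lambda=0\) by monotone (or dominated) convergence as \(t\to\infty\), but it need not be attained. This matches the ``whenever \(\vartheta>0\)'' qualification in the statement.

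The main obstacle is the construction of \(t^*\) and \(c\), which must handle atoms of the distribution of \(g\); this is why the level set \(\{g=t^*\}\) is filled with a fractional occupation \(c\). An alternative route, Lagrangian duality for the linear constraint, would need a separate attainment argument. The direct construction gives attainment for free, so I would use it.
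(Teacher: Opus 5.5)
Your proof is correct, but it follows a different route from the paper's. For \(\vartheta=0\) the paper argues exactly as you do. For \(\vartheta>0\), however, it introduces a multiplier \(t\geq0\) for the budget constraint and notes that \(\sup_{0\leq a\leq1}L(a,t)=\vartheta t+\int_Z(g-t)_+\,\mathrm{d}\lambda\). It then invokes strong Lagrangian duality under Slater's condition, and obtains attainment separately from continuity and coercivity of the dual objective on \([0,+\infty)\).

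You instead prove weak duality through the pointwise bound \(ag\leq at+(g-t)_+\), and you close the gap with a bathtub construction. There, the quantile \(t^*\) of the distribution of \(g\) and the occupation \(\mathbf{1}_{\{g>t^*\}}+c\,\mathbf{1}_{\{g=t^*\}}\) make both the pointwise bound and the budget constraint tight.

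The paper's argument is shorter and rests on a general duality theorem that would carry over to other convex constraint structures. Yours is fully elementary, with no duality theorem on \(L^\infty\) and no qualification condition. It also yields a dual minimizer and an explicit primal maximizer at the same time, and identifies them concretely. The optimal threshold is a quantile of \(g\), and the optimal occupation saturates the upper tail \(\{g>t^*\}\), filling a possible atom at level \(t^*\) only fractionally. This is precisely the upper-tail interpretation, and it shows that \(t=0\) is optimal in the saturated case \(\vartheta\geq\lambda(\{g>0\})\).

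The one step you assert without proof is \(\lambda(\{g\geq t^*\})\geq\vartheta\). It follows from \(\{g\geq t^*\}=\bigcap_{0\leq t<t^*}\{g>t\}\) (here \(t^*>0\) is used), continuity from above of \(\lambda\), and \(F(t)>\vartheta\) for \(t<t^*\).
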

\begin{proof} 
  If \(\vartheta=0\), the only admissible occupation is \(a=0\), so the supremum equals zero. Moreover, \[ 0\leq(g-t)_+\leq g \quad\text{and}\quad (g-t)_+\to0 \quad\lambda\text{-a.e. as }t\to+\infty. \] The dominated convergence theorem therefore gives \( \inf_{t\geq0} \int_Z(g-t)_+\,\mathrm d\lambda =0. \)

  Let us now assume that \(\vartheta>0\).
  Let \(t\geq0\) be the Lagrange multiplier associated with the inequality constraint \(\int_Z a\,\mathrm{d}\lambda\leq\vartheta\);
  note that Slater's constraint qualification holds since \(\vartheta>0\).
  The associated Lagrangian is 
  \[
    L(a,t) = \int_Zag\,\mathrm{d}\lambda + t\left( \vartheta-\int_Za\,\mathrm{d}\lambda \right) = \vartheta t+ \int_Za(g-t)\,\mathrm{d}\lambda,
  \]
  Noting that 
  \[ \sup_{0\leq a\leq1}L(a,t) = \vartheta t+ \int_Z(g-t)_+\,\mathrm{d}\lambda \geq \vartheta t, \]
  the claimed equality follows from the strong Lagrangian duality.
  Furthermore, the dual objective is continuous and coercive on \([0,+\infty)\), so the dual problem admits an optimal solution.
\end{proof}

\begin{proposition}[Reduced problem] \label{prop:intermediate-reduction} 
    For every \(\delta >0\) and \(\vartheta>0\)
    \begin{equation} \label{eq:intermediate-reduced-value} 
        i_\delta(\vartheta) = -\inf_{u\in L^p(\O)} \left\{ \mathcal{E}_{\delta,\vartheta}(u) - \langle f,u\rangle_\O \right\}.
    \end{equation}
\end{proposition}
\begin{proof} 
  This is just an application of a convex minimax theorem.
  Let
  \[ \begin{aligned} 
    L_\delta(a,u) &= \frac{|\Od|^2} {p\vartheta} \int_{E_\delta} a|D_\delta u|^p \,\mathrm{d}\lambda_\delta - \langle f,u\rangle_\O,
    \qquad (a,u)\in \mathcal{A}_{\delta,\vartheta}^{s} \times X_\delta^p.
   \end{aligned} \]
   By the saddle-point representation written in the rescaled variables, see~\eqref{eq:rescaled_itheta},
   \[i_\delta(\vartheta) = - \sup_{a\in \mathcal{A}_{\delta,\vartheta}^{s}} \inf_{u\in X_\delta^p} L_\delta(a,u). \]
   We equip \(\mathcal{A}_{\delta,\vartheta}^{s}\) with the weak-star topology of \(L^\infty(E_\delta)\) and \(X_\delta^p\) with its norm topology. 
   The set \(\mathcal{A}_{\delta,\vartheta}^{s}\) is then convex and weak-star compact, and \(X_\delta^p\) is convex.
   For every fixed \(u\in X_\delta^p\), the map \( a\mapsto L_\delta(a,u) \) is affine and weak-star continuous, since \( |D_\delta u|^p\in L^1(E_\delta). \)
   For every fixed \(a\in\mathcal{A}_{\delta,\vartheta}^{s}\), the map \( u\mapsto L_\delta(a,u) \) is convex and continuous on \(X_\delta^p\). 
   Therefore, a convex minimax theorem (see, for example, \cite[Corollary~3.3]{Sio58}) yields 
   \(
   i_\delta(\vartheta) = - \inf_{u\in X_\delta^p}\sup_{a\in \mathcal{A}_{\delta,\vartheta}^{s}}  L_\delta(a,u).
   \)
The symmetry restriction may be dropped in the inner supremum. 
Indeed, for every \(a\in\mathcal{A}_{\delta,\vartheta}\), its symmetric part \( a^{\sym}=\frac{a+a^{T}}2 \) belongs to \(\mathcal{A}_{\delta,\vartheta}^{\sym}\), because \(0\leq a^{\sym}\leq1\) and, by invariance of \(\lambda_\delta\) under endpoint interchange, \( \int_{E_\delta}a^{\sym}\,\mathrm{d}\lambda_\delta = \int_{E_\delta}a\,\mathrm{d}\lambda_\delta. \) Since \(|D_\delta u|^p\) is symmetric, \( \int_{E_\delta} a^{\sym}|D_\delta u|^p \,\mathrm{d}\lambda_\delta = \int_{E_\delta} a|D_\delta u|^p \,\mathrm{d}\lambda_\delta \);
see Lemma~\ref{lem:symmetric-antisymmetric-fields}.
 Consequently, the inner supremum equals \(\mathcal{T}_{\delta,\vartheta}(|D_\delta u|^p)\), which gives~\eqref{eq:intermediate-reduced-value}.
\end{proof}

\subsection{The limiting reduced energies} 
\label{subsec:intermediate-limit-functional} 
Let \( \mathrm{d}\nu(s) = |\mathbb{S}^{d-1}|^{-1} \mathrm{d}\mathcal{H}^{d-1}(s) \) be the uniform probability measure on \(\mathbb{S}^{d-1}\), and let 
\( \mathrm{d}\lambda_0(x,s) = |\O|^{-1}\mathrm{d}\mathcal{L}^d(x) \otimes \mathrm{d}\nu(s) \)
be the uniform probability measure on \(\O\times \mathbb{S}^{d-1}\). 
For \(g\in L^1( \O\times\mathbb{S}^{d-1},\lambda_0)\), \(g\geq0\), define 
\begin{equation} \label{eq:local-upper-tail} 
  \mathcal{T}_\theta(g) = \sup_{\substack{ a\in L^\infty( \O\times\mathbb{S}^{d-1})\\ 0\leq a\leq1\\ \int a\,\mathrm{d}\lambda_0\leq\theta }} 
  \int_{\O\times\mathbb{S}^{d-1}} ag\,\mathrm{d}\lambda_0. 
\end{equation} 
Similarly to~\eqref{eq:intermediate-energy}, for \(\theta>0\) we put 
\begin{equation*} 
  \mathcal{E}_\theta(u) = \begin{cases}\displaystyle\frac{|\O|^2}{p\theta} \mathcal{T}_\theta \bigl( |\nabla u(x)\cdot s|^p \bigr), &
    u \in W^{1,p}_0(\O),\\[2mm]
    +\infty &\text{otherwise},\end{cases}
\end{equation*} 
while for \(\theta = 0\) we define
\begin{equation} \label{eq:intermediate-local-energy0} 
  \mathcal{E}_0(u) = \begin{cases}\displaystyle\frac{|\O|^2}{p}
    \operatorname{Lip}(u)^p, &
    u \in \operatorname{Lip}_0(\O),\\[2mm]
    +\infty &\text{otherwise}.\end{cases}
\end{equation} 
In either case, for \(\theta\in[0,+\infty)\) we let
\begin{equation} \label{eq:intermediate-local-value} 
  i_0(\theta) = -\inf_{u\in L^p(\O)} \left\{ \mathcal{E}_\theta(u) - \langle f,u\rangle_\O \right\}. 
\end{equation}

A direct passage to the limit in~\eqref{eq:prob1} or~\eqref{eq:rescaled_itheta} would require a lower-semicontinuity
result for nonlocal energies with varying two-point coefficients. 
This is related in spirit to the generalized Ponce inequality of~\cite{Mun21}, but the 
present coefficients retain a bond-direction dependence and do not have the averaged 
spatial form considered there. 
The minimax reduction of Proposition~\ref{prop:intermediate-reduction}, followed by the
dual representation of the upper-tail functional, avoids this joint coefficient-state
limit. 
It reduces the lower-bound argument to the nonlocal \(\Gamma\)-liminf inequality for the fixed
convex integrand \( r\mapsto(|r|^p-t)_+. \)

\begin{proposition}[Localization of the reduced energies]
\label{prop:intermediate-gamma-limit}
Assume that
\[
    \theta_\delta\to\theta\in[0,+\infty).
\]
Then
\[
    \mathcal{E}_{\delta,\theta_\delta}
    \xrightarrow{\Gamma}
    \mathcal{E}_\theta
    \qquad\text{in }L^p(\O)
    \quad\text{as }\delta\downarrow0.
\]
Moreover, the family
\((\mathcal{E}_{\delta,\theta_\delta})\)
is equicoercive.
\end{proposition}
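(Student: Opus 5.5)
We use Lemma~\ref{lem:upper-tail-duality} to write, for \(u\in X^p_\delta\) and \(\vartheta>0\),
\[
\mathcal{E}_{\delta,\vartheta}(u)=\frac{|\Od|^2}{p\vartheta}\inf_{t\ge0}\Bigl\{\vartheta t+\int_{E_\delta}\bigl(|D_\delta u|^p-t\bigr)_+\,\mathrm d\lambda_\delta\Bigr\},
\]
and analogously \(\mathcal{E}_\theta\) for \(\theta>0\) with \(\lambda_0\) and \(|\nabla u\cdot s|^p\). Since \(|\Od|^2\to|\O|^2\) and \(\theta_\delta\to\theta\), the prefactors are harmless. Each of equicoercivity, the liminf inequality and the limsup inequality then reduces to statements about the fixed convex integrands \(r\mapsto(|r|^p-t)_+\) and \(r\mapsto|r|^p\), to which nonlocal compactness results of Bourgain--Brezis--Mironescu/Ponce type apply.

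\emph{Equicoercivity.} For \(\vartheta\ge1\) the tail is the full integral. For \(\vartheta<1\), choosing \(a\) as the indicator of the upper level set of \(|D_\delta u|^p\) of mass \(\theta_\delta\) and using Hölder on the complement, we get \(\mathcal{T}_{\delta,\vartheta}(g)\ge \vartheta\int g\,\mathrm d\lambda_\delta\). Hence
\[
\mathcal{E}_{\delta,\theta_\delta}(u)\ge \frac{|\Od|^2}{p}\min\{1,\theta_\delta^{-1}\}\,\|D_\delta u\|^p_{L^p(\lambda_\delta)},
\]
and since \(|E_\delta|\hat\rho_\delta\sim|\O|/K_{d,p}\) by Proposition~\ref{prop:Ed_over_Bd}, this controls the standard BBM energy \(\iint \rho_\delta|u(x)-u(x')|^p/|x-x'|^p\). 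Ponce's compactness theorem then gives relative compactness in \(L^p(\O)\) of zero-extended bounded-energy families, with limits in \(W^{1,p}_0(\O)\). For \(\theta=0\) we additionally need the Lipschitz bound, which is part of the liminf step.

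\emph{Liminf.} Let \(u_\delta\to u\) with bounded energy. Fix \(t\ge0\); the paper takes the infimum over \(t\) after the limit (taking it before would give the wrong inequality direction). We use the upper bound \(\mathcal{E}\le\) (any fixed \(t\) expression) only in the limsup step. For the liminf we pick the optimal \(t_\delta\), which exists by Lemma~\ref{lem:upper-tail-duality}, and pass to a subsequence \(t_\delta\to t\in[0,\infty]\). If \(t<\infty\), monotonicity in \(t\) reduces the bound to the \(\Gamma\)-liminf of \(u\mapsto\int(|D_\delta u|^p-t')_+\,\mathrm d\lambda_\delta\) for fixed \(t'>t\). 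By the nonlocal liminf for convex integrands \(\phi(|r|)\) with the directional averaging over \(B_\delta\), this is at least \(\int(|\nabla u\cdot s|^p-t')_+\,\mathrm d\lambda_0\). This is the step I expect to be the main obstacle: one must localize the blow-up \(x'=x+\delta\varepsilon s\) and use Jensen in the radial variable together with convexity of \((\cdot-t)_+\) composed with \(|\cdot|^p\), which is nondecreasing in \(|r|\). The case \(t=\infty\) forces \(\theta=0\). In the sparse regime the term \(\theta_\delta t_\delta/\theta_\delta=t_\delta\) stays bounded. Thus \(t_\delta\le C\), and \(\int(|D_\delta u_\delta|^p-t_\delta)_+\,\mathrm d\lambda_\delta\le C\theta_\delta\to0\). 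The limiting \(u\) therefore satisfies \(|\nabla u\cdot s|^p\le t\) a.e., so \(\operatorname{Lip}(u)^p\le\liminf t_\delta\), which gives the bound for \(\mathcal{E}_0\). If \(\theta\ge1\), the constraint is inactive and the limit is \(\frac{|\O|^2}{p\theta}\int|\nabla u\cdot s|^p\,\mathrm d\lambda_0=\frac{|\O|}{p\theta}K_{d,p}\|\nabla u\|_p^p\). This is consistent with \(\mathcal{E}_\theta\), since \(\mathcal{T}_\theta\) is then the full integral.

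\emph{Limsup.} By density and continuity of \(\mathcal{E}_\theta\) in \(W^{1,p}\), or in the Lipschitz seminorm under uniform convergence for \(\theta=0\), it suffices to take \(u\in C_c^\infty(\O)\) with the constant recovery sequence \(u_\delta=u\). For fixed \(t\), Taylor expansion gives \(D_\delta u(x,x')=\nabla u(x)\cdot s+O(\delta)\) with \(s=(x'-x)/|x-x'|\). Since \(\lambda_\delta\) pushes forward to approximately \(\lambda_0\times(\text{radial law})\), we get \(\int(|D_\delta u|^p-t)_+\,\mathrm d\lambda_\delta\to\int(|\nabla u\cdot s|^p-t)_+\,\mathrm d\lambda_0\). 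Taking the infimum over \(t\) yields \(\limsup\le\mathcal{E}_\theta(u)\) for \(\theta>0\). For \(\theta=0\), choose \(t=\operatorname{Lip}(u)^p\). Then \((|D_\delta u|^p-t)_+\le C\delta\) because \(|D_\delta u|\le\operatorname{Lip}(u)\), and the energy is at most \(\frac{|\Od|^2}{p}\bigl(t+C\delta/\theta_\delta\bigr)\). Since \(\delta/\theta_\delta\) need not vanish, we instead replace \(u\) by \((1-\eta)u\) and use \(t=\operatorname{Lip}(u)^p\). Then \(|D_\delta((1-\eta)u)|\le(1-\eta)\operatorname{Lip}(u)\), so the positive part vanishes identically, and letting \(\eta\to0\) gives the limsup inequality.
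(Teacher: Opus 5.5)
Your proposal is correct and follows essentially the paper's route. It uses the threshold representation from Lemma~\ref{lem:upper-tail-duality}, bounded optimal thresholds, the nonlocal liminf of Ponce type for the fixed convex integrand $(r^p-t)_+$, and constant recovery sequences; your monotonicity-in-$t$ step and your fixed-$t$-then-infimum limsup are harmless simplifications of the paper's Lipschitz-in-$t$ and uniform-convergence arguments. Two slips are worth fixing, though neither breaks the proof. First, the optimal thresholds are bounded in every regime, since $\mathcal{E}_{\delta,\theta_\delta}(u_\delta)\geq\frac{|\Od|^2}{p}t_\delta$, so $t=\infty$ never occurs. Second, for $\theta=0$ the bound $|D_\delta u|\leq\operatorname{Lip}(u)$ already makes $(|D_\delta u|^p-\operatorname{Lip}(u)^p)_+$ vanish identically, not merely $\leq C\delta$, so the $(1-\eta)u$ detour and the smoothing are unnecessary.
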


\begin{proof}
\textit{Equicoercivity.}
For every nonnegative
\(g\in L^1(E_\delta,\lambda_\delta)\) and every \(\vartheta>0\),
the constant occupation
\(a\equiv\min\{\vartheta,1\}\) is admissible in
\(\mathcal{A}_{\delta,\vartheta}\). Hence, by
\eqref{eq:nonlocal-upper-tail},
\begin{equation*}
    \mathcal{T}_{\delta,\vartheta}(g)
    \geq
    \min\{\vartheta,1\}
    \int_{E_\delta}g\,\mathrm{d}\lambda_\delta.
\end{equation*}
Consequently,
\begin{equation}
\label{eq:etheta_coercivity}
\begin{aligned}
    \mathcal{E}_{\delta,\theta_\delta}(u)
    &\geq
    \underbrace{
        \frac{|\O_\delta|^2}{p}
        \frac{\min\{\theta_\delta,1\}}{\theta_\delta}
    }_{\to\,|\O|^2/[p\max\{\theta,1\}]}
    \int_{E_\delta}|D_\delta u|^p
    \,\mathrm{d}\lambda_\delta.
\end{aligned}
\end{equation}
The nonlocal Poincar\'e inequality and compactness theorem
\cite[Theorems~1.1 and~1.2]{ponce2004estimate} therefore imply
that the family
\((\mathcal{E}_{\delta,\theta_\delta})\)
is equicoercive in \(L^p(\O)\).

\medskip
\noindent
\textit{The liminf inequality.}
Let \(\delta_k\downarrow0\) and let
\(u_{\delta_k}\to u\) strongly in \(L^p(\O)\).
After passing to a subsequence realizing the lower limit, we may
assume that
\[
    \sup_k
    \mathcal{E}_{\delta_k,\theta_{\delta_k}}
        (u_{\delta_k})
    <+\infty,
\]
since otherwise there is nothing to prove.
By \eqref{eq:etheta_coercivity} and the nonlocal characterization
of Sobolev spaces,
\(
    u\in W^{1,p}_0(\O).
\)

For every \(k\), let \(t_{\delta_k}\geq0\) be a minimizer in
\eqref{eq:dual_tail}, corresponding to
\(
    g=|D_{\delta_k}u_{\delta_k}|^p.
\)
For \(r\geq0\), let
\(
    \omega_t(r)=(r^p-t)_+.
\)
Lemma~\ref{lem:upper-tail-duality} gives
\begin{equation}
\label{eq:gammalinf_bound}
\begin{aligned}
    \mathcal{E}_{\delta_k,\theta_{\delta_k}}
        (u_{\delta_k})
    &=
    \frac{|\O_{\delta_k}|^2}{p}
    \left[
        t_{\delta_k}
        +
        \frac{1}{\theta_{\delta_k}}
        \int_{E_{\delta_k}}
        \omega_{t_{\delta_k}}\bigl(
            |D_{\delta_k}u_{\delta_k}|
        \bigr)
        \,\mathrm{d}\lambda_{\delta_k}
    \right]
    \geq
    \frac{|\O_{\delta_k}|^2}{p}t_{\delta_k}.
\end{aligned}
\end{equation}
Thus \((t_{\delta_k})\) is bounded. After passing to a further
subsequence, we may assume that
\(
    t_{\delta_k}\to t
\)
for some \(t\geq0\).

Let
\(
    \rho_{\delta_k}(h)
    =
    |B_{\delta_k}|^{-1}
    \mathbf{1}_{B_{\delta_k}}(h).
\)
Then
\[
\begin{aligned}
    \int_{E_{\delta_k}}
        \omega_t(|D_{\delta_k}u_{\delta_k}|)
        \,\mathrm{d}\lambda_{\delta_k}
    &=
    \frac{|B_{\delta_k}|}{|E_{\delta_k}|}
    \int_Q\int_Q
        \omega_t\left(
            \frac{
                |u_{\delta_k}(x)-u_{\delta_k}(x')|
            }{
                |x-x'|
            }
        \right)
        \rho_{\delta_k}(x-x')
        \,\mathrm{d}x'\,\mathrm{d}x,
\end{aligned}
\]
where
\(Q\supset\operatorname{cl}\O_{\delta_*}\)
is the fixed bounded Lipschitz domain.

The map \(r\mapsto\omega_t(r)\) is continuous, nonnegative,
nondecreasing, and convex, and satisfies
\[
    0\leq\omega_t(r)\leq r^p.
\]
The nonlocal liminf inequality
\cite[Theorem~8 and Corollary~8]{ponce2004new},
the radial-kernel identification
\cite[Example~1]{ponce2004new}, and
\[
    \frac{|B_{\delta_k}|}{|E_{\delta_k}|}
    \to
    \frac1{|\O|},
\]
see Proposition~\ref{prop:Ed_over_Bd}, therefore give
\begin{equation}
\label{eq:gammalinf_ponce_impl}
\begin{aligned}
    \liminf_{k\to\infty}
    \int_{E_{\delta_k}}
        \omega_{t_{\delta_k}}
        (|D_{\delta_k}u_{\delta_k}|)
        \,\mathrm{d}\lambda_{\delta_k}
    &=
    \liminf_{k\to\infty}
    \int_{E_{\delta_k}}
        \omega_t
        (|D_{\delta_k}u_{\delta_k}|)
        \,\mathrm{d}\lambda_{\delta_k}
\\
    &\geq
    \frac1{|\O|}
    \int_\O
    \int_{\mathbb{S}^{d-1}}
        \omega_t(|\nabla u(x)\cdot s|)
        \,\mathrm{d}\nu(s)\,\mathrm{d}x
\\
    &=
    \int_{\O\times\mathbb{S}^{d-1}}
        \omega_t(|\nabla u(x)\cdot s|)
        \,\mathrm{d}\lambda_0(x,s).
\end{aligned}
\end{equation}
Here the first equality follows from
\[
    |\omega_{t_{\delta_k}}(r)-\omega_t(r)|
    \leq
    |t_{\delta_k}-t|,
    \qquad r\geq0.
\]

\smallskip
\noindent
\emph{Case \(\theta>0\).}
Using \eqref{eq:gammalinf_bound},
\eqref{eq:gammalinf_ponce_impl}, and
\(\theta_{\delta_k}\to\theta\), we obtain
\[
    \lim_{k\to\infty}
    \mathcal{E}_{\delta_k,\theta_{\delta_k}}
        (u_{\delta_k})
    \geq
    \frac{|\O|^2}{p\theta}
    \left[
        \theta t
        +
        \int_{\O\times\mathbb{S}^{d-1}}
            \omega_t\bigl(
                |\nabla u\cdot s|
            \bigr)
            \,\mathrm{d}\lambda_0
    \right]
\geq
    \mathcal{E}_\theta(u),
\]
where the last inequality follows from
Lemma~\ref{lem:upper-tail-duality}.

\smallskip
\noindent
\emph{Case \(\theta=0\).}
The boundedness in \eqref{eq:gammalinf_bound} gives
\[
    0
    \leq
    \int_{E_{\delta_k}}
        \omega_{t_{\delta_k}}
        (|D_{\delta_k}u_{\delta_k}|)
        \,\mathrm{d}\lambda_{\delta_k}
    \leq
    \frac{p\theta_{\delta_k}}
         {|\O_{\delta_k}|^2}
    \mathcal{E}_{\delta_k,\theta_{\delta_k}}
        (u_{\delta_k})
    \to 0.
\]
Together with \eqref{eq:gammalinf_ponce_impl}, this yields
\[
    \int_{\O\times\mathbb{S}^{d-1}}
        \omega_t\bigl(
            |\nabla u(x)\cdot s|
        \bigr)
        \,\mathrm{d}\lambda_0(x,s)
    =0.
\]
Hence
\(
    |\nabla u(x)\cdot s|^p\leq t
\)
for \(\lambda_0\)-almost every \((x,s)\).
For almost every \(x\), continuity with respect to \(s\) extends
the inequality to every \(s\in\mathbb{S}^{d-1}\). Therefore,
\[
    |\nabla u(x)|^p
    =
    \sup_{s\in\mathbb{S}^{d-1}}
        |\nabla u(x)\cdot s|^p
    \leq t
\]
for almost every \(x\in\O\). Thus the zero extension of \(u\)
has a representative in \(\operatorname{Lip}_0(\O)\), and
\[
    \operatorname{Lip}(u)^p
    =
    \|\nabla u\|_{L^\infty(\O;\R^d)}^p
    \leq t.
\]
Using \eqref{eq:gammalinf_bound} once more, we conclude that
\begin{equation*}
\begin{aligned}
    \liminf_{k\to\infty}
    \mathcal{E}_{\delta_k,\theta_{\delta_k}}
        (u_{\delta_k})
    \geq
    \lim_{k\to\infty}
    \frac{|\O_{\delta_k}|^2}{p}t_{\delta_k}
    =
    \frac{|\O|^2}{p}t
    \geq
    \frac{|\O|^2}{p}\operatorname{Lip}(u)^p
    =
    \mathcal{E}_0(u).
\end{aligned}
\end{equation*}
This proves the liminf inequality.

\medskip
\noindent
\textit{The limsup inequality.}
We construct constant recovery sequences in both cases.

\smallskip
\noindent
\emph{Case \(\theta>0\).}
Let \(u\in W^{1,p}_0(\O)\), and, for \(t\geq0\), define
\[
\begin{aligned}
    F_\delta(t)
    &=
    \theta_\delta t
    +
    \int_{E_\delta}
        \omega_t(|D_\delta u|)
        \,\mathrm{d}\lambda_\delta,
\\
    F(t)
    &=
    \theta t
    +
    \int_{\O\times\mathbb{S}^{d-1}}
        \omega_t(|\nabla u(x)\cdot s|)
        \,\mathrm{d}\lambda_0(x,s).
\end{aligned}
\]
The zero extension \(u_0\) of \(u\) belongs to
\(W^{1,p}(\Rn)\). Since \(\omega_t\) is continuous and
nonnegative and satisfies
\[
    0\leq\omega_t(r)\leq r^p,
\]
the pointwise nonlocal-to-local convergence theorem
\cite[Theorem~4]{ponce2004new}, together with the radial-kernel
identification \cite[Example~1]{ponce2004new}, gives
\[
    F_\delta(t)\to F(t)
    \qquad\text{for every }t\geq0.
\]

Let \(t_\delta\) minimize \(F_\delta\). Since
\(F_\delta(t_\delta)\leq F_\delta(0)\),
\[
    \theta_\delta t_\delta
    \leq
    \int_{E_\delta}|D_\delta u|^p
    \,\mathrm{d}\lambda_\delta.
\]
The right-hand side remains bounded, and
\(\theta_\delta\to\theta>0\). Hence the minimizing thresholds
\(t_\delta\) belong to a common compact interval \([0,M]\).
Moreover,
\[
    |F_\delta(t)-F_\delta(\widetilde t)|
    \leq
    (1+\theta_\delta)|t-\widetilde t|,
    \qquad t,\widetilde t\geq0.
\]
Thus \(F_\delta\to F\) uniformly on \([0,M]\), and consequently
\[
    \mathcal{T}_{\delta,\theta_\delta}
        (|D_\delta u|^p)
    =
    \min_{t\geq0}F_\delta(t)
    \to
    \min_{t\geq0}F(t)
    =
    \mathcal{T}_\theta
        (|\nabla u\cdot s|^p),
\]
see Lemma~\ref{lem:upper-tail-duality}. Therefore,
\[
    \mathcal{E}_{\delta,\theta_\delta}(u)
    \to
    \mathcal{E}_\theta(u).
\]
If \(u\notin W^{1,p}_0(\O)\), then
\(\mathcal{E}_\theta(u)=+\infty\), and the limsup condition is
automatic.

\smallskip
\noindent
\emph{Case \(\theta=0\).}
Let \(u\in\operatorname{Lip}_0(\O)\) and use the constant
sequence \(u_\delta=u\). Since the zero extension of \(u\) is
globally Lipschitz,
\[
    |D_\delta u(x,x')|
    \leq
    \operatorname{Lip}(u)
\]
for almost every \((x,x')\in E_\delta\). Moreover, for every
nonnegative \(g\in L^\infty(E_\delta)\),
\[
    \mathcal{T}_{\delta,\theta_\delta}(g)
    \leq
    \theta_\delta
    \|g\|_{L^\infty(E_\delta)}.
\]
Consequently,
\[
    \mathcal{E}_{\delta,\theta_\delta}(u)
    \leq
    \frac{|\Od|^2}{p}
    \|D_\delta u\|_{L^\infty(E_\delta)}^p
    \leq
    \frac{|\Od|^2}{p}
    \operatorname{Lip}(u)^p.
\]
Since \(|\O_\delta|\to|\O|\),
\[
    \limsup_{\delta\downarrow0}
    \mathcal{E}_{\delta,\theta_\delta}(u)
    \leq
    \mathcal{E}_0(u).
\]
If \(u\notin\operatorname{Lip}_0(\O)\), then
\(\mathcal{E}_0(u)=+\infty\), and the limsup condition is
automatic.
\end{proof}

\begin{remark}[Existence of reduced optimal states]
We note that the reduced problem~\eqref{eq:intermediate-reduced-value} attains its infimum.
Indeed, the estimate~\eqref{eq:etheta_coercivity} used with \(\vartheta=\theta_\delta>0\), together with the nonlocal Poincar{\'e} inequality, shows that 
\[ u\mapsto \mathcal{E}_{\delta,\theta_\delta}(u) - \langle f,u\rangle_\O \] is coercive on \(X_\delta^p\). 
Furthermore, by~\eqref{eq:nonlocal-upper-tail}, 
\(\mathcal{E}_{\delta,\theta_\delta}\) is the supremum, over all admissible densities \(a\), of the convex weakly lower
semicontinuous functionals 
\[ u\mapsto \frac{|\Od|^2} {p\theta_\delta} \int_{E_\delta} a|D_\delta u|^p \,\mathrm{d}\lambda_\delta. \] 
It is therefore weakly lower semicontinuous on \(X_\delta^p\). Since \(X_\delta^p\) is reflexive and \(u\mapsto\langle f,u\rangle_\O\) is continuous on \(X_\delta^p\), the direct method yields a minimizer.
\end{remark}

\begin{theorem}[Localization of optimal values and states] \label{thm:intermediate-localization} 
  Assume that 
  \[ \theta_\delta\to\theta \qquad\text{for some }\theta\in[0,+\infty). \] 
  Then 
  \[ i_\delta(\theta_\delta) \to i_0(\theta), \] 
  where \(i_0(\theta)\) is defined in~\eqref{eq:intermediate-local-value}. 
  Moreover, if \(u_\delta\) is any optimal state for~\eqref{eq:intermediate-reduced-value}, then \((u_\delta)\) is relatively compact in \(L^p(\O)\), and every cluster point is an optimal state for~\eqref{eq:intermediate-local-value}. 
  If the latter state is unique, then the whole family converges. 
\end{theorem}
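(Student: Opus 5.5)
This is the fundamental theorem of \(\Gamma\)-convergence applied to Proposition~\ref{prop:intermediate-gamma-limit}, perturbed by the continuous linear term. Set \(G_\delta(u)=\mathcal{E}_{\delta,\theta_\delta}(u)-\langle f,u\rangle_\O\) and \(G_\theta(u)=\mathcal{E}_\theta(u)-\langle f,u\rangle_\O\) on \(L^p(\O)\). Since \(f\in L^q(\O)\), the map \(u\mapsto\langle f,u\rangle_\O\) is continuous on \(L^p(\O)\). \(\Gamma\)-convergence is stable under continuous perturbations, so \(G_\delta\xrightarrow{\Gamma}G_\theta\) in \(L^p(\O)\). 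By Proposition~\ref{prop:intermediate-reduction}, \(i_\delta(\theta_\delta)=-\inf G_\delta\), and by definition \(i_0(\theta)=-\inf G_\theta\). It therefore suffices to show \(\inf G_\delta\to\inf G_\theta\) and that minimizers cluster at minimizers.

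\textbf{Step 1: equicoercivity survives the linear term.} I will use \eqref{eq:etheta_coercivity} together with the nonlocal Poincar\'e inequality, \(\|u\|_{L^p}\le C\|D_\delta u\|_{L^p(E_\delta,\lambda_\delta)}\). Some care is needed because the paper's Poincar\'e inequality is stated for the unnormalized operator; after the normalizations one should get, uniformly in small \(\delta\), \(\|u\|_{L^p}^p\le C\int_{E_\delta}|D_\delta u|^p\,\mathrm d\lambda_\delta\). This should hold because \(\hat\rho_\delta|E_\delta|\) is comparable to \(|\O|/K_{d,p}\) by Proposition~\ref{prop:Ed_over_Bd}. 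Combining, \(G_\delta(u)\ge c\|u\|_{L^p}^p-\|f\|_{L^q}\|u\|_{L^p}\) with \(c>0\) independent of \(\delta\), since the prefactor in \eqref{eq:etheta_coercivity} tends to \(|\O|^2/[p\max\{\theta,1\}]>0\). Hence every sequence with \(\sup_k G_{\delta_k}(u_k)<\infty\) is bounded in \(L^p\) and has bounded \(\mathcal{E}_{\delta_k,\theta_{\delta_k}}(u_k)\). Equicoercivity in Proposition~\ref{prop:intermediate-gamma-limit}, via the compactness theorem of \cite{ponce2004estimate}, then makes it precompact in \(L^p(\O)\).

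\textbf{Step 2: upper bound on the infimum.} Testing with \(u=0\) gives \(\inf G_\delta\le0\). For any \(v\) with \(G_\theta(v)<\infty\), a recovery sequence \(v_\delta\to v\) gives \(\limsup\inf G_\delta\le\limsup G_\delta(v_\delta)\le G_\theta(v)\). Taking the infimum over \(v\) yields \(\limsup_\delta\inf G_\delta\le\inf G_\theta\).

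\textbf{Step 3: lower bound and cluster points.} Let \(u_\delta\) be optimal for \eqref{eq:intermediate-reduced-value}; these exist by the preceding remark. Then \(G_\delta(u_\delta)\le0\), so by Step 1 any sequence \(\delta_k\downarrow0\) has a subsequence with \(u_{\delta_k}\to u\) in \(L^p\). The liminf inequality gives \(G_\theta(u)\le\liminf G_{\delta_k}(u_{\delta_k})=\liminf\inf G_{\delta_k}\le\inf G_\theta\), using Step 2. So \(u\) is optimal for \eqref{eq:intermediate-local-value} and \(\inf G_{\delta_k}\to\inf G_\theta\) along the subsequence.

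\textbf{Step 4: full convergence.} The limit of the values is independent of the subsequence, so the whole family satisfies \(i_\delta(\theta_\delta)\to i_0(\theta)\). If the limiting optimal state is unique, every subsequence of \((u_\delta)\) has a further subsequence converging to it, so the whole family converges.

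The only nonroutine point is Step 1, specifically checking that the normalized Poincar\'e constant is uniform in \(\delta\). I would verify this directly from the scaling relations in Subsection~\ref{subsec:rescaling} and Proposition~\ref{prop:Ed_over_Bd}. Alternatively, one can use the \(L^p\) bound that Ponce's compactness theorem provides along bounded-energy sequences.
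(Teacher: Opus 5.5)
Your proposal is correct and follows the paper's own proof. The paper perturbs the \(\Gamma\)-limit of Proposition~\ref{prop:intermediate-gamma-limit} by the continuous linear term. It gets uniform equicoercivity from \eqref{eq:etheta_coercivity}, the nonlocal Poincar\'e inequality and H\"older/Young, and then invokes the fundamental theorem of \(\Gamma\)-convergence; your Steps~2--4 just write that theorem out, and your check of the uniform normalized Poincar\'e constant via \(\hat\rho_\delta|E_\delta|\to|\O|/K_{d,p}\) fills in a detail the paper leaves implicit.
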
 
\begin{proof} By Proposition~\ref{prop:intermediate-reduction}, \[ i_\delta(\theta_\delta) = - \inf_{u\in L^p(\O)} \left\{ \mathcal{E}_{\delta,\theta_\delta}(u) - \langle f,u\rangle_\O \right\}. \] Since \( u\mapsto-\langle f,u\rangle_\O \) is continuous on \(L^p(\O)\), Proposition~\ref{prop:intermediate-gamma-limit} implies that \[ \mathcal{E}_{\delta,\theta_\delta} - \langle f,\cdot\rangle_\O \xrightarrow{\Gamma} \mathcal{E}_\theta - \langle f,\cdot\rangle_\O \] in \(L^p(\O)\). Moreover, estimate~\eqref{eq:etheta_coercivity} and the nonlocal Poincar{\'e} inequality give, uniformly for sufficiently small \(\delta\), \[ \mathcal{E}_{\delta,\theta_\delta}(u) \geq c\|u\|_{L^p(\O)}^p. \] Consequently, by H{\"o}lder's and Young's inequalities, \[ \mathcal{E}_{\delta,\theta_\delta}(u) - \langle f,u\rangle_\O \geq c'\|u\|_{L^p(\O)}^p-C, \] for constants \(c',C>0\) independent of sufficiently small \(\delta\). Thus the perturbed functionals are equicoercive. The assertions now follow from the fundamental theorem of \(\Gamma\)-convergence. \end{proof}

\subsection{The sparse endpoint \(\theta=0\)} 
The limiting reduced energies are pointwise continuous at the sparse endpoint \(\theta=0\).
Indeed, if \(g\geq0\) belongs to \(L^1(\O\times\mathbb{S}^{d-1},\lambda_0)\), then 
\[ \lim_{\theta\downarrow0} \frac{1}{\theta}\mathcal{T}_\theta(g) = 
\operatorname*{ess\,sup}_{\O\times\mathbb{S}^{d-1}} g. \] 
Consequently, for every \(u\in L^p(\O)\), 
\[ \lim_{\theta\downarrow0}\mathcal{E}_\theta(u) = \mathcal{E}_0(u). \] 
Indeed, for \(u\in W_0^{1,p}(\O)\), 
\[ \begin{aligned} 
  \lim_{\theta\downarrow0}\mathcal{E}_\theta(u) &= \frac{|\O|^2}{p} 
  \operatorname*{ess\,sup}_{(x,s) \in\O\times\mathbb{S}^{d-1}} |\nabla u(x)\cdot s|^p = \frac{|\O|^2}{p} \|\nabla u\|_{L^\infty(\O;\mathbb{R}^d)}^p. \end{aligned} \] 
This quantity is finite precisely when the zero extension of \(u\) belongs to \(\operatorname{Lip}_0(\O)\), in which case 
\[ \|\nabla u\|_{L^\infty(\O;\Rn)} = \operatorname{Lip}(u). \] 
Thus the definition of \(\mathcal{E}_0\) in \eqref{eq:intermediate-local-energy0} is the pointwise continuation 
of the upper-tail family \((\mathcal{E}_\theta)_{\theta>0}\) at \(\theta=0\). 
Proposition~\ref{prop:intermediate-gamma-limit} strengthens this pointwise observation 
by allowing the material fraction and the interaction horizon to vanish simultaneously.

\subsection{The saturated range \(\theta\geq 1\)} \label{subsec:saturated}

Whenever \(\vartheta\geq1\), the occupation \(a\equiv1\) is admissible in 
\(\mathcal{A}_{\delta,\vartheta}\), and therefore 
\[ \mathcal{T}_{\delta,\vartheta}(g) = \int_{E_\delta}g\,\mathrm{d}\lambda_\delta. \] 
It follows that 
\[ \mathcal{E}_{\delta,\vartheta}(u) = \frac{|\Od|^2}{p\vartheta} 
\int_{E_\delta}|D_\delta u|^p \,\mathrm{d}\lambda_\delta, \qquad u\in X_\delta^p. 
\] 
Thus, whenever \(\theta_\delta\geq1\), the material optimization is inactive and the reduced 
problem is precisely a rescaled nonlocal \(p\)-Dirichlet problem.

Similar reasoning applies to the limiting local problem.
Therefore, if \[ \theta_\delta\to\theta\in[1,+\infty), \] then 
Proposition~\ref{prop:intermediate-gamma-limit} and Theorem~\ref{thm:intermediate-localization} apply with 
\[ \mathcal{E}_\theta(u) = \begin{cases} \displaystyle \frac{K_{d,p}|\O|}{p\theta} \int_\O|\nabla u|^p\,\mathrm dx, &u\in W_0^{1,p}(\O), \\[1ex] +\infty, &\text{otherwise}. \end{cases} \] 
In particular, 
\[ i_\delta(\theta_\delta)\to i_0(\theta). \]
Since \[ u \mapsto \mathcal{E}_\theta(u)-\langle f,u\rangle_\O \] is strictly convex, its minimizer is unique, and the entire sequence of optimal reduced states converges in \(L^p(\O)\).

\subsection{The endpoint \(p=1\)} \label{subsec:p-one} 

Although the state--flux formulation of Section~\ref{sec:setup} was introduced under the assumption \(1<p<+\infty\), the reduced state energies admit a natural continuation to \(p=1\). 
In this subsection we briefly record the corresponding localization result.
We use the normalized difference operator \(D_\delta\) and the probability measure \(\lambda_\delta\) introduced above, and set 
\[ X_\delta^1 = \{u\in L^1(\O): D_\delta u\in L^1(E_\delta,\lambda_\delta)\}, \qquad \|u\|_{X_\delta^1} = \|D_\delta u\|_{L^1(E_\delta,\lambda_\delta)}.
\] 
The endpoint reduced energy \(\mathcal{E}_{\delta,\vartheta}^{(1)}\) is defined by setting \(p=1\) in~\eqref{eq:intermediate-energy}.
Lemma~\ref{lem:upper-tail-duality} applies without change when \(p=1\).

Let \(u_0\) denote the extension of \(u\) by zero outside \(\O\), regarded as a function on the fixed ambient domain \(Q\) introduced in Subsection~\ref{subsec:rescaling}. 
For \(u_0\in BV(Q)\), we write 
\[ 
  Du_0=\nabla u\,\mathcal{L}^d\mres\O +D^s u_0 . 
\] 
Thus \(D^s u_0\) includes any singular contribution concentrated on \(\partial\O\).
For \(\vartheta>0\), define 
\[ \mathcal{E}_\vartheta^{(1)}(u) = 
\begin{cases} \displaystyle \frac{|\O|^2}{\vartheta} \left[ T_\vartheta\bigl(|\nabla u(x)\cdot s|\bigr) + \frac{K_{d,1}}{|\O|} |D^s u_0|(Q) \right], & u_0\in BV(Q),\\[2ex] +\infty, & \text{otherwise}, \end{cases} 
\] 
where \(T_\vartheta\) is the upper-tail functional from~\eqref{eq:local-upper-tail}. 
Equivalently, 
\[ 
  \mathcal{E}_\vartheta^{(1)}(u) = 
  |\O|^2 \inf_{t\geq0} \left\{ t + \frac{1}{\vartheta|\O|} \int_\O\int_{\mathbb{S}^{d-1}} \bigl(|\nabla u(x)\cdot s|-t\bigr)_+ \,d\nu(s)\dx + 
  \frac{K_{d,1}}{\vartheta|\O|} |D^s u_0|(Q) \right\}.
\] 
At the sparse endpoint \(\vartheta=0\), we use the definition~\eqref{eq:intermediate-local-energy0} with \(p=1\) and put \(\mathcal{E}_0^{(1)}(u) = \mathcal{E}_0(u)\).

\begin{proposition}[Localization at \(p=1\)] \label{prop:localization-p-one} 
  Assume that \(\theta_\delta\to\theta\in[0,+\infty)\). 
  Then 
  \[ \mathcal{E}_{\delta,\theta_\delta}^{(1)} \xrightarrow{\Gamma} \mathcal{E}_\theta^{(1)} \qquad\text{in }L^1(\O)
    \quad\text{as }\delta\downarrow0. 
  \] 
  Moreover, the family \(\bigl(\mathcal{E}_{\delta,\theta_\delta}^{(1)}\bigr)\) is equicoercive in \(L^1(\O)\). 
\end{proposition}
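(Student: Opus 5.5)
I will follow the structure of the proof of Proposition~\ref{prop:intermediate-gamma-limit} step by step, replacing the \(W^{1,p}\) nonlocal theory of~\cite{ponce2004new,ponce2004estimate} by its \(BV\) counterpart at \(p=1\).

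\textbf{Equicoercivity.} Testing the upper tail with the constant occupation \(a\equiv\min\{\vartheta,1\}\) gives, exactly as in~\eqref{eq:etheta_coercivity},
\[
\mathcal{E}^{(1)}_{\delta,\theta_\delta}(u)\geq c\int_{E_\delta}|D_\delta u|\,\mathrm{d}\lambda_\delta
\]
for a constant \(c>0\) independent of small \(\delta\). The integral on the right equals a constant multiple of the Bourgain--Brezis--Mironescu quantity \(\iint_{Q\times Q}\frac{|u_0(x)-u_0(x')|}{|x-x'|}\rho_\delta(x-x')\), with \(\rho_\delta=|B_\delta|^{-1}\mathbf 1_{B_\delta}\), because \(|B_\delta|/|E_\delta|\to|\O|^{-1}\) by Proposition~\ref{prop:Ed_over_Bd}. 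The \(p=1\) cases of~\cite[Theorems~1.1 and~1.2]{ponce2004estimate} give the uniform Poincar\'e bound and \(L^1\)-compactness. Any limit has \(u_0\in BV(Q)\) vanishing outside \(\O\).

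\textbf{Liminf.} Take \(u_k\to u\) in \(L^1\) with bounded energies, and choose dual minimizers \(t_k\) from Lemma~\ref{lem:upper-tail-duality}. The identity~\eqref{eq:gammalinf_bound} bounds \(t_k\), so \(t_k\to t\) along a subsequence. Replacing \(t_k\) by \(t\) costs at most \(|t_k-t|\), by the same Lipschitz estimate as before. The key input is the \(BV\) version of the nonlocal liminf for the convex integrand \(\omega_t(r)=(r-t)_+\), which has linear growth with recession function \(r\mapsto r\):
\[
\liminf_k\int_{E_{\delta_k}}\omega_t(|D_{\delta_k}u_k|)\,\mathrm{d}\lambda_{\delta_k}\geq\frac1{|\O|}\Bigl[\int_\O\int_{\Sn}\omega_t(|\nabla u\cdot s|)\,\mathrm d\nu\,\mathrm dx+K_{d,1}|D^su_0|(Q)\Bigr].
\]
Here the singular part is weighted by \(\int_{\Sn}|\nu_{D^s u_0}\cdot s|\,\mathrm d\nu=K_{d,1}\). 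I would prove this by citing~\cite[Theorem~8/Corollary~8]{ponce2004new}, which cover convex \(\omega\) and \(BV\) when \(\omega\) has linear growth. If a direct citation is unavailable, I would argue by localization: decompose \(Q\) into Lebesgue-density and singular parts, and use a blow-up/Reshetnyak argument. At a Lebesgue point of \(\nabla u\), the absolutely continuous density is at least \(\int_{\Sn}\omega_t(|\nabla u\cdot s|)\). On the singular part, mollify and use that \(\omega_t(r)\geq r-t\); the \(-t\) term costs nothing on a set of vanishing Lebesgue measure.

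\emph{Case \(\theta>0\).} Combining the estimate with the dual formula gives \(\liminf\mathcal{E}^{(1)}_{\delta_k,\theta_{\delta_k}}(u_k)\) at least \(\mathcal{E}^{(1)}_\theta(u)\) in its equivalent inf-over-\(t\) form.

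\emph{Case \(\theta=0\).} As before, \(\int\omega_{t_k}\to0\). The estimate then forces \(D^su_0=0\) and \(|\nabla u|\leq t\) a.e. Hence \(u_0\in W^{1,\infty}\), i.e. \(u\in\operatorname{Lip}_0(\O)\), with \(\operatorname{Lip}(u)\leq t\), and the conclusion follows as for \(p>1\).

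\textbf{Limsup.} For \(\theta=0\), the constant-sequence argument is verbatim. For \(\theta>0\) and \(u_0\in BV(Q)\), pointwise convergence \(F_\delta(t)\to F(t)\) of the dual objectives is not available for the constant sequence when \(D^su_0\neq0\). I would therefore use a strict-approximation diagonal argument. First, approximate \(u\) by \(u_j\in C_c^\infty(\O)\) with \(u_j\to u\) in \(L^1\) and \(\mathcal{E}^{(1)}_\theta(u_j)\to\mathcal{E}^{(1)}_\theta(u)\). This is an area-strict convergence (Reshetnyak continuity) of \(Du_{j}\) to \(Du_0\) on \(Q\). It uses that the integrand \(\xi\mapsto\inf_t\{\theta t+\int(|\xi\cdot s|-t)_+\}\)-type functional is continuous and has recession \(K_{d,1}|\xi|\), and that \(\O\) is Lipschitz, which lets one push supports inside. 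Next, for each smooth \(u_j\), the constant-sequence argument from the proof for \(p>1\) applies, via~\cite[Theorem~4]{ponce2004new} with \(p=1\) and uniform convergence of \(F_\delta\) on compacts. A diagonal extraction then yields the recovery sequence, using lower semicontinuity of the \(\Gamma\)-limsup.

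\textbf{Main obstacle.} I expect the hardest step to be the approximation in the limsup. The functional \(\mathcal{E}^{(1)}_\theta\) is an inf over \(t\) of convex linear-growth functionals rather than a single integral functional, so one must check that the Reshetnyak continuity applies uniformly in \(t\) on a compact interval. It must also respect the zero boundary condition: boundary jumps are counted in \(|D^su_0|(Q)\), and the inward-shifting approximations must converge strictly on \(Q\).
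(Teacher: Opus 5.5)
Your equicoercivity and liminf steps match the paper's: the same constant-occupation lower bound with the \(p=1\) Poincar\'e and compactness theorems, bounded dual thresholds, the linear-growth liminf of \cite[Theorem~8 and Corollary~8]{ponce2004new} for \(\omega_t(r)=(r-t)_+\) with recession slope \(1\), and the same \(\theta=0\) argument.

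For the limsup with \(\theta>0\) you take a genuinely different route, and the reason you give for it is mistaken. Pointwise convergence \(F_\delta(t)\to F(t)\) for the constant sequence does hold when \(u_0\in BV(Q)\), with \(F\) containing the term \(K_{d,1}|D^su_0|(Q)/|\O|\). The paper simply invokes this BV pointwise formula \cite[Theorem~3]{ponce2004new}, keeps \(u_\delta=u\), and reuses the bounded-threshold and equi-Lipschitz argument verbatim.

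You can also check this by hand. The function \(\omega_t\) is convex, nondecreasing, \(1\)-Lipschitz, and satisfies \(\omega_t(0)=0\). Hence, for a measure \(\mu=g\,\mathrm d\tau+\mu^s\) on an interval \(I\),
\[
\omega_t\bigl(|\mu(I)|/|I|\bigr)\leq|I|^{-1}\Bigl(\int_I\omega_t(|g|)\,\mathrm d\tau+|\mu^s|(I)\Bigr).
\]
Apply this to the slices of \(u_0\) along \(\hat h=h/|h|\) and integrate in \(x\). For every \(h\neq0\) this gives
\[
\int_{\Rn}\omega_t\Bigl(\frac{|u_0(x+h)-u_0(x)|}{|h|}\Bigr)\dx\leq\int_\O\omega_t(|\nabla u\cdot\hat h|)\dx+|D^su_0\cdot\hat h|(Q).
\]
Averaging over \(h\in B_\delta\) shows that, for every \(\delta\), \(\int_{E_\delta}\omega_t(|D_\delta u|)\,\mathrm d\lambda_\delta\) is bounded by
\[
\frac{|B_\delta|}{|E_\delta|}\Bigl[\int_\O\int_{\Sn}\omega_t(|\nabla u\cdot s|)\,\mathrm d\nu\dx+K_{d,1}|D^su_0|(Q)\Bigr].
\]
The liminf inequality applied to the constant sequence gives the matching lower bound. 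Intuitively, at a jump the straddling bonds contribute \((1/|x-x'|-t)_+\), and the \(-t\) correction is only \(O(\delta)\).

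Your alternative is sound. It combines:
\begin{enumerate}
\item area-strict density of \(C_c^\infty(\O)\) in \(\{u_0\in BV(Q):u_0=0\text{ off }\O\}\), counting the boundary trace;
\item Reshetnyak continuity for \(\xi\mapsto\int_{\Sn}(|\xi\cdot s|-t)_+\,\mathrm d\nu\), whose strong recession function is \(K_{d,1}|\xi|\);
\item uniformity in \(t\), which follows from the \(1\)-Lipschitz dependence on \(t\);
\item a diagonal argument using lower semicontinuity of the \(\Gamma\)-limsup.
\end{enumerate}
This route needs only the \(W^{1,1}\) pointwise formula, so it makes the proof independent of the BV pointwise theorem. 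The price is exactly the boundary-respecting approximation lemma you flagged as the main obstacle. The paper's route avoids that lemma and also yields the stronger conclusion that constant sequences are recovery sequences.
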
 
\begin{proof} 
  The argument follows the proof of Proposition~\ref{prop:intermediate-gamma-limit}, and we indicate only the modifications required for \(p=1\).
  
  We obtain the estimate~\eqref{eq:etheta_coercivity} with \(p=1\) using the same arguments as in the proof of Proposition~\ref{prop:intermediate-gamma-limit}.
  This estimate implies equicoercivity in \(L^1(\O)\)
  owing to the nonlocal Poincar{\'e} inequality and the compactness theorem~\cite[Theorems~1.1 and~1.2]{ponce2004estimate}.

  We then follow the proof of Proposition~\ref{prop:intermediate-gamma-limit} to obtain the liminf-inequality.
  In particular, we obtain the boundedness of \(t_{\delta_k}\) as in~\eqref{eq:gammalinf_bound}, and switch to a subsequence along which \(t_{\delta_k}\to t\geq0\).
  Boundedness of energies implies compactness in \(L^1(Q)\).
  After passing to a subsequence, the zero extensions converge in \(L^1(Q)\) to some \(u_0\in BV(Q)\).
  Thus we apply the nonlocal \(\Gamma\)-liminf inequality~\cite[Theorem~8 and Corollary~8]{ponce2004new} to the convex integrand \( \omega_t(r)=(r-t)_+, \) which has linear growth and an asymptotic slope \( \omega_{t,\infty}=\lim_{r\to\infty} \omega_t(r)/r = 1\).
  Therefore its recession function is \(r\mapsto r\).
  Consequently, the absolutely continuous part of \(Du_0\) produces the term 
  \[ \frac{1}{|\O|} \int_\O\int_{\mathbb{S}^{d-1}} \bigl(|\nabla u(x)\cdot s|-t\bigr)_+ \,d\nu(s)\dx, 
  \] 
  whereas the singular part produces the term
  \[ \frac{K_{d,1}}{|\O|}|D^s u_0|(Q). \] 
  Taking an infimum with respect to \(t\) gives the asserted lower bound when \(\theta>0\). 
  If \(\theta=0\), the same argument gives 
  \[ \int_\O\int_{\mathbb{S}^{d-1}} \bigl(|\nabla u(x)\cdot s|-t\bigr)_+ \,d\nu(s)\dx + K_{d,1}|D^s u_0|(Q) =0. 
  \] 
  Therefore \(D^s u_0=0\) and \(|\nabla u(x)\cdot s|\leq t\) for almost every \((x,s)\). 
  It follows that \(u_0\) has a representative in 
  \(\operatorname{Lip}_0(\O)\) with \(\operatorname{Lip}(u)\leq t\). 
  Using~\eqref{eq:gammalinf_bound} with \(p=1\), exactly as in Proposition~\ref{prop:intermediate-gamma-limit}, now yields the liminf inequality.

  For the upper bound when \(\theta>0\), the pointwise nonlocal-to-local convergence formula \cite[Theorem~3]{ponce2004new}, together with the radial-kernel identification \cite[Example~1]{ponce2004new}, applies to \(\omega_t(r)=(r-t)_+\) and gives the preceding absolutely continuous and singular terms. 
  The boundedness and equicontinuity argument for the minimizing thresholds is unchanged.
  When \(\theta=0\), the constant recovery sequence arguments used in Proposition~\ref{prop:intermediate-gamma-limit} apply verbatim to every \(u\in\operatorname{Lip}_0(\O)\).
\end{proof}

%% file: 04_flux_recovery.tex
\section{Recovery of optimal fluxes in the sparse regime}
\label{sec:sparse-flux-recovery}

The state formulation developed in Section~\ref{sec:intermediate-material}
already identifies the sparse limit, corresponding to \(\theta=0\),
of the normalized free-material problem.
The purpose of this section is different: we show that optimal two-point
fluxes of the nonlocal problems recover optimal measure-valued fluxes for the
local optimal design problem. Throughout this section, we assume that
\[
    \theta_\delta\to0
    \qquad\text{as }\delta\downarrow0.
\]

\subsection{The local optimal design problem}
\label{subsec:local-optimal-layout}

Let
\begin{equation}
\label{eq:m0-local-state}
    m_0
    =
    \max_{\substack{
        v\in\operatorname{Lip}_0(\O)\\
        \operatorname{Lip}(v)\leq1
    }}
    \langle f,v\rangle_\O.
\end{equation}
The maximum is attained by the Arzel{\`a}--Ascoli theorem. 
Moreover, the standard duality arguments yield
\begin{equation}
\label{eq:m0-local-flux}
    m_0
    =
    \inf_{\substack{
        \sigma\in L^1(\O;\Rn)\\
        \operatorname{div}\sigma=f
    }}
    \|\sigma\|_{L^1(\O;\Rn)}
    =
    \min_{\substack{
        \mu\in\mathcal{M}(\operatorname{cl}\O;\Rn)\\
        \operatorname{div}\mu=f
    }}
    \|\mu\|_{\mathcal{M}(\operatorname{cl}\O;\Rn)},
\end{equation}
see Appendix~\ref{appx:loc_rlx}.
Here the divergence constraint is understood in \(\mathcal{D}'(\O)\).

The endpoint value in \eqref{eq:intermediate-local-value}
can be expressed in terms of \(m_0\). 
Indeed, using the positive homogeneity of the Lipschitz seminorm, 
we obtain
\begin{equation*}
\begin{aligned}
    i_0(0)
    &=-
    \inf_{u\in\operatorname{Lip}_0(\O)}
    \left\{
        \frac{|\O|^2}{p}\operatorname{Lip}(u)^p
        -\langle f,u\rangle_\O
    \right\}
    =-
    \inf_{r\geq0}
    \left\{
        \frac{|\O|^2}{p}r^p-rm_0
    \right\}
    =
    \frac{m_0^q}{q|\O|^{2(q-1)}}.
\end{aligned}
\end{equation*}
Consequently, Theorem~\ref{thm:intermediate-localization} implies
\begin{equation}
\label{eq:sparse-value-convergence}
    i_\delta(\theta_\delta)
    \to
    \frac{m_0^q}{q|\O|^{2(q-1)}}.
\end{equation}

\subsection{The normalized \texorpdfstring{\(L^1\)}{L1}-divergence and flux recovery}
\label{subsec:normalized-flux-recovery}

We use the normalized bond measure \(\lambda_\delta\) introduced in
Subsection~\ref{subsec:rescaling}.
Let \(Y_\delta^1\) denote the domain of the graph closure of 
\[ 
  \operatorname{div}_\delta: Y_\delta^q\subset L^1(E_\delta,\lambda_\delta) \to L^q(\O), 
\] 
and continue to denote the closed realization by \(\operatorname{div}_\delta\). 
We set 
\(
  Y_\delta^1(f) = \{\sigma\in Y_\delta^1: \operatorname{div}_\delta\sigma=f\}; 
\) 
see Appendix~\ref{app:uncapped-exact-localization} for details.
For every \(\sigma\in Y^1_\delta\) and
\(v\in\operatorname{Lip}_0(\O)\), the closed realization satisfies
\begin{equation}
\label{eq:L1-normalized-ibp}
    \langle\operatorname{div}_\delta\sigma,v\rangle_\O
    =
    -\int_{E_\delta}\sigma D_\delta v\,
    \mathrm d\lambda_\delta.
\end{equation}
Indeed, the identity follows by passing to the limit along a graph-approximating sequence, since \[ \|D_\delta v\|_{L^\infty(E_\delta)} \leq \operatorname{Lip}(v). \]
As before, we may restrict our attention to antisymmetric fluxes;
see Lemma~\ref{lem:symmetric-antisymmetric-fields}.

For \(\sigma\in L^1(E_\delta,\lambda_\delta)\), define the recovered
one-point vector field by
\begin{equation}
\label{eq:normalized-recovery-operator}
    R_\delta\sigma(x)
    =
    \frac1{|E_\delta|}
    \int_{\Od}
        \frac{x-x'}{|x-x'|}\,
        \sigma(x,x')
        \mathbf{1}_{B_\delta}(x-x')
        \dx',
    \qquad x\in\Od.
\end{equation}
The normalization in \eqref{eq:normalized-recovery-operator} is the one
corresponding to the reference measure \(\lambda_\delta\). In particular,
Fubini's theorem gives
\begin{equation}
\label{eq:recovery-L1-bound}
    \|R_\delta\sigma\|_{L^1(\Od;\Rn)}
    \leq
    \|\sigma\|_{L^1(E_\delta,\lambda_\delta)}.
\end{equation}

\begin{proposition}[Compactness and consistency of recovered fluxes]
\label{prop:normalized-flux-recovery}
Let \(\delta_k\downarrow0\), and let
\(\sigma_k\in Y^1_{\delta_k}(f)\) be antisymmetric and satisfy
\(
    \sup_k
    \|\sigma_k\|_{L^1(E_{\delta_k},\lambda_{\delta_k})}
    <+\infty.
\)
Set
\(
    \mu_k
    =
    R_{\delta_k}\sigma_k\,\mathcal{L}^d,
\)
where we extend the densities by zero outside \(\O_{\delta_k}\).
Then \((\mu_k)\) is relatively weak-star compact in
\(\mathcal{M}(\operatorname{cl}\O_{\delta_*};\Rn)\).
Every weak-star cluster point
\(\mu\in\mathcal{M}(\operatorname{cl}\O;\Rn)\) satisfies
\[
    \operatorname{div}\mu=f
    \qquad\text{in }\mathcal{D}'(\O),
\]
and
\[
    \|\mu\|_{\mathcal{M}(\operatorname{cl}\O;\Rn)}
    \leq
    \liminf_{k\to\infty}
    \|\sigma_k\|_{L^1(E_{\delta_k},\lambda_{\delta_k})}.
\]
In particular, if
\[
    \|\sigma_k\|_{L^1(E_{\delta_k},\lambda_{\delta_k})}
    \to m_0,
\]
then every cluster point is an optimal solution of the measure problem
in \eqref{eq:m0-local-flux}.
\end{proposition}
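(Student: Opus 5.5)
The plan has four steps: a uniform mass bound with Banach--Alaoglu for compactness, a Taylor expansion of the difference quotient against smooth test functions to identify the divergence, lower semicontinuity of the total variation for the norm bound, and a comparison with \(m_0\) for optimality. Compactness is immediate from \eqref{eq:recovery-L1-bound}: the total variations \(\|\mu_k\|\leq\|R_{\delta_k}\sigma_k\|_{L^1(\O_{\delta_k};\Rn)}\leq\|\sigma_k\|_{L^1(E_{\delta_k},\lambda_{\delta_k})}\) are uniformly bounded. Since \(\operatorname{spt}\mu_k\subset\operatorname{cl}\O_{\delta_k}\subset\operatorname{cl}\O_{\delta_*}\), Banach--Alaoglu in \(\mathcal M(\operatorname{cl}\O_{\delta_*};\Rn)=C(\operatorname{cl}\O_{\delta_*};\Rn)^*\) yields relative weak-star compactness. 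For any cluster point \(\mu\) and any \(\varphi\in C(\operatorname{cl}\O_{\delta_*};\Rn)\) supported in \(\Rn\setminus\operatorname{cl}\O\), we have \(\langle\mu_k,\varphi\rangle=0\) once \(\operatorname{cl}\O_{\delta_k}\cap\operatorname{spt}\varphi=\emptyset\). Hence \(\operatorname{spt}\mu\subset\operatorname{cl}\O\), so \(\mu\in\mathcal M(\operatorname{cl}\O;\Rn)\). The norm bound then follows from weak-star lower semicontinuity of the total variation along the subsequence realizing the \(\liminf\).

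The core step is the divergence identity. Fix \(\varphi\in\mathcal D(\O)\subset\operatorname{Lip}_0(\O)\). By \eqref{eq:L1-normalized-ibp} and \(\operatorname{div}_{\delta_k}\sigma_k=f\),
\[
  \langle f,\varphi\rangle_\O=-\int_{E_{\delta_k}}\sigma_k D_{\delta_k}\varphi\,\mathrm d\lambda_{\delta_k}.
\]
Under \ref{A2}, \(D_\delta\varphi(x,x')=(\varphi(x)-\varphi(x'))/|x-x'|\) on \(E_\delta\). A Taylor expansion gives
\[
  \frac{\varphi(x)-\varphi(x')}{|x-x'|}=\nabla\varphi(x)\cdot\frac{x-x'}{|x-x'|}+r(x,x'),\qquad |r|\leq\tfrac12\|\nabla^2\varphi\|_\infty\delta .
\]
Substituting and using Fubini together with \eqref{eq:normalized-recovery-operator}, the main term becomes
\[
  \frac1{|E_\delta|}\iint \sigma_k(x,x')\,\nabla\varphi(x)\cdot\frac{x-x'}{|x-x'|}\mathbf 1_{B_\delta}(x-x')\,\mathrm dx'\,\mathrm dx=\int_{\O_{\delta_k}}\nabla\varphi\cdot R_{\delta_k}\sigma_k\,\mathrm dx .
\]
The remainder is bounded by \(C\delta_k\|\sigma_k\|_{L^1(E_{\delta_k},\lambda_{\delta_k})}\to0\). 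Therefore
\[
  \langle f,\varphi\rangle_\O=-\lim_k\int\nabla\varphi\cdot\mathrm d\mu_k=-\int\nabla\varphi\cdot\mathrm d\mu ,
\]
since \(\nabla\varphi\) extends by zero to a continuous test field on \(\operatorname{cl}\O_{\delta_*}\). This is exactly \(\operatorname{div}\mu=f\) in \(\mathcal D'(\O)\).

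One subtlety is the measurability and definition of \(\sigma_k D_\delta\varphi\) for \(\sigma_k\in Y^1_\delta\) rather than \(Y^q_\delta\). This is handled by \eqref{eq:L1-normalized-ibp} with \(v=\varphi\), since \(D_\delta\varphi\in L^\infty\). Antisymmetry is not strictly needed in this computation, though it shows that \(R_\delta\) captures the full flux; without it one would symmetrize.

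The final claim follows by combining \(\|\mu\|\leq\liminf_k\|\sigma_k\|=m_0\) with the fact that \(\mu\) is admissible in the measure problem \eqref{eq:m0-local-flux}, whose minimum equals \(m_0\) by Appendix~\ref{appx:loc_rlx}. This forces \(\|\mu\|=m_0\), so \(\mu\) is optimal. I expect the main obstacle to be the uniform Taylor remainder bound and the careful bookkeeping of the normalization \(|E_\delta|\) in passing from the bond integral to the \(R_\delta\) form. Both reduce to the elementary estimate \(|r|\leq C\delta\) on \(E_\delta\).
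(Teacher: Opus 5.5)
Your proposal is correct and follows the paper's proof essentially step for step: Banach--Alaoglu via \eqref{eq:recovery-L1-bound}; testing \eqref{eq:L1-normalized-ibp} with $\varphi\in C_c^\infty(\Omega)$ and comparing the difference quotient with $\nabla\varphi(x)\cdot\frac{x-x'}{|x-x'|}$ (you bound the error by $\tfrac12\|\nabla^2\varphi\|_\infty\delta_k$ where the paper uses the modulus of continuity $\omega_{\nabla\varphi}(\delta_k)$, which makes no real difference); weak-star lower semicontinuity of the total variation; and comparison with $m_0$. One small caveat, which also applies to the paper's own formulation: for a \emph{given} cluster point $\mu$, lower semicontinuity yields $\|\mu\|\leq\liminf_j\|\sigma_{k_j}\|_{L^1(E_{\delta_{k_j}},\lambda_{\delta_{k_j}})}$ only along a subsequence with $\mu_{k_j}\overset{*}{\rightharpoonup}\mu$, because the subsequence realizing the full $\liminf$ need not converge to $\mu$, so strictly one only gets a $\limsup_k$ bound; this is harmless for the optimality claim, since there the norms converge to $m_0$.
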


\begin{proof}
Estimate~\eqref{eq:recovery-L1-bound} and the Banach--Alaoglu theorem give
weak-star compactness. 
Since
\(\operatorname{spt}\mu_k\subset\operatorname{cl}\O_{\delta_k}\),
every cluster point is supported in \(\operatorname{cl}\O\).

Let \(\varphi\in C_c^\infty(\O)\). By
\eqref{eq:L1-normalized-ibp},
\[
    \langle f,\varphi\rangle_\O
    =
    -\int_{E_{\delta_k}}
        \sigma_k D_{\delta_k}\varphi
        \,\mathrm d\lambda_{\delta_k}.
\]
On the other hand, by \eqref{eq:normalized-recovery-operator},
\[
\begin{aligned}
    \int_{\O_{\delta_k}}
        \nabla\varphi(x)\cdot R_{\delta_k}\sigma_k(x)
        \dx
    =
    \int_{E_{\delta_k}}
        \sigma_k(x,x')
        \nabla\varphi(x)\cdot
        \frac{x-x'}{|x-x'|}
        \,\mathrm{d}\lambda_{\delta_k}(x,x').
\end{aligned}
\]
Therefore,
\[
\begin{aligned}
    \left|
        \int_{\O_{\delta_k}}
            \nabla\varphi\cdot R_{\delta_k}\sigma_k
            \dx
        +\langle f,\varphi\rangle_\O
    \right|
    \leq
    \omega_{\nabla\varphi}(\delta_k)
    \|\sigma_k\|_{L^1(E_{\delta_k},\lambda_{\delta_k})},
\end{aligned}
\]
where
\[
    \omega_{\nabla\varphi}(r)
    =
    \sup_{|x-y|\leq r}
    |\nabla\varphi(x)-\nabla\varphi(y)|.
\]
The right-hand side tends to zero. Passing to a weak-star convergent
subsequence gives
\[
    -\int_{\operatorname{cl}\O}\nabla\varphi\cdot\mathrm d\mu
    =
    \langle f,\varphi\rangle_\O.
\]
Thus \(\operatorname{div}\mu=f\). 
The norm estimate follows from~\eqref{eq:recovery-L1-bound} and weak-star lower
semicontinuity of total variation.
The final assertion follows from~\eqref{eq:m0-local-flux}.
\end{proof}

\subsection{Optimal two-point fluxes at the sparse endpoint}
\label{subsec:optimal-capped-sparse-fluxes}

We now apply Proposition~\ref{prop:normalized-flux-recovery} to optimal
fluxes of the nonlocal problem. 
Let \((a_\delta,\sigma_\delta)\) be an optimal pair in the normalized
complementary formulation \eqref{eq:rescaled_itheta}.
By Proposition~\ref{prop:existence_varpinc}, we may choose \(\sigma_\delta\) to be antisymmetric. 
Then
\begin{equation}
\label{eq:optimal-complementary-energy-sparse}
    i_\delta(\theta_\delta)
    =
    \frac{1}{q}
    \left(
        \frac{|\Od|^2}{\theta_\delta}
    \right)^{1-q}
    \int_{E_\delta}
        a_\delta^{1-q}|\sigma_\delta|^q
        \,\mathrm{d}\lambda_\delta.
\end{equation}
By H\"older's inequality and
\(\int_{E_\delta}a_\delta\,\mathrm d\lambda_\delta\leq\theta_\delta\),
\[
\begin{aligned}
    \|\sigma_\delta\|_{L^1(E_\delta,\lambda_\delta)}
    &\leq
    \left(
        \int_{E_\delta}
            a_\delta^{1-q}|\sigma_\delta|^q
            \,\mathrm d\lambda_\delta
    \right)^{1/q}
    \left(
        \int_{E_\delta}a_\delta
        \,\mathrm{d}\lambda_\delta
    \right)^{1/p}
    \leq
    \theta_\delta^{1/p}
    \left(
        \int_{E_\delta}
            a_\delta^{1-q}|\sigma_\delta|^q
            \,\mathrm{d}\lambda_\delta
    \right)^{1/q}.
\end{aligned}
\]
Together with \eqref{eq:optimal-complementary-energy-sparse}, this gives
\begin{equation}
\label{eq:sparse-flux-upper-bound}
    \frac{
        \|\sigma_\delta\|_{L^1(E_\delta,\lambda_\delta)}^q
    }{
        q|\Od|^{2(q-1)}
    }
    \leq
    i_\delta(\theta_\delta).
\end{equation}

Conversely, if
\(v\in\operatorname{Lip}_0(\O)\) and
\(\operatorname{Lip}(v)\leq1\), then
\eqref{eq:L1-normalized-ibp} gives
\[
\begin{aligned}
    \langle f,v\rangle_\O
    &=-
    \int_{E_\delta}
        \sigma_\delta D_\delta v
        \,\mathrm{d}\lambda_\delta
    \leq
    \|\sigma_\delta\|_{L^1(E_\delta,\lambda_\delta)}
    \|D_\delta v\|_{L^\infty(E_\delta)}
    \leq
    \|\sigma_\delta\|_{L^1(E_\delta,\lambda_\delta)}.
\end{aligned}
\]
Taking the supremum over all such \(v\) yields
\begin{equation}
\label{eq:sparse-flux-lower-bound}
    m_0
    \leq
    \|\sigma_\delta\|_{L^1(E_\delta,\lambda_\delta)}.
\end{equation}

\begin{theorem}[Recovery of optimal fluxes at the sparse endpoint]
\label{thm:sparse-optimal-flux-recovery}
Assume that \(\theta_\delta\to0\). For every \(\delta>0\), let
\((a_\delta,\sigma_\delta)\) be an optimal pair for the normalized
complementary problem, with \(\sigma_\delta\) chosen antisymmetric. Then
\[
    \|\sigma_\delta\|_{L^1(E_\delta,\lambda_\delta)}
    \to m_0.
\]
Moreover, the measures
\[
    \mu_\delta
    =
    R_\delta\sigma_\delta\,\mathcal{L}^d
\]
are relatively weak-star compact in
\(\mathcal{M}(\operatorname{cl}\O_{\delta_*};\Rn)\), and
every weak-star cluster point is an optimal solution of the measure-valued problem in~\eqref{eq:m0-local-flux}.
\end{theorem}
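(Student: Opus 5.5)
The plan is a squeeze argument combining the two bounds just derived with the value convergence~\eqref{eq:sparse-value-convergence}, followed by an appeal to Proposition~\ref{prop:normalized-flux-recovery}.

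\emph{Step 1: the norms converge.} By~\eqref{eq:sparse-flux-lower-bound} we have \(m_0\leq\|\sigma_\delta\|_{L^1(E_\delta,\lambda_\delta)}\) for every \(\delta\). For the opposite direction, raise~\eqref{eq:sparse-flux-upper-bound} to the power \(1/q\):
\[
    \|\sigma_\delta\|_{L^1(E_\delta,\lambda_\delta)}
    \leq
    \bigl(q|\Od|^{2(q-1)}\,i_\delta(\theta_\delta)\bigr)^{1/q}.
\]
Since \(|\Od|\to|\O|\) and \(i_\delta(\theta_\delta)\to m_0^q/(q|\O|^{2(q-1)})\) by~\eqref{eq:sparse-value-convergence}, the right-hand side tends to \(m_0\). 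Therefore \(\limsup_{\delta\downarrow0}\|\sigma_\delta\|_{L^1(E_\delta,\lambda_\delta)}\leq m_0\). Combined with the lower bound, this gives \(\|\sigma_\delta\|_{L^1(E_\delta,\lambda_\delta)}\to m_0\). The same estimate shows that the norms are bounded uniformly for \(0<\delta\leq\delta_*\).

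\emph{Step 2: the hypotheses of Proposition~\ref{prop:normalized-flux-recovery} hold.} The main point to verify is that \(\sigma_\delta\in Y^1_\delta(f)\). The optimal flux satisfies \(\sigma_\delta\in Y^q_\delta(f)\), so \(\operatorname{div}_\delta\sigma_\delta=f\) with \(\sigma_\delta\in L^q(E_\delta,\lambda_\delta)\subset L^1(E_\delta,\lambda_\delta)\), because \(\lambda_\delta\) is a probability measure. Since \(Y^1_\delta\) is the domain of the graph closure of \(\operatorname{div}_\delta\) defined on \(Y^q_\delta\), the inclusion \(Y^q_\delta\subset Y^1_\delta\) holds, and the closed realization agrees with \(\operatorname{div}_\delta\) there. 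Hence \(\sigma_\delta\in Y^1_\delta(f)\). Antisymmetry holds by the choice of \(\sigma_\delta\), justified by Proposition~\ref{prop:existence_varpinc}. One subtlety is that \(\sigma_\delta\) should be finite in the complementary energy so that the H\"older step is valid. This is guaranteed because \(i_\delta(\theta_\delta)<\infty\): every feasible flux paired with the admissible constant occupation \(a\equiv\theta_\delta\) gives a finite value, and if \(\theta_\delta>1\) one uses \(a\equiv1\) instead.

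\emph{Step 3: compactness and identification of cluster points.} Take any sequence \(\delta_k\downarrow0\). Step 2 lets us apply Proposition~\ref{prop:normalized-flux-recovery} to \(\sigma_k=\sigma_{\delta_k}\). This gives relative weak-star compactness of \(\mu_{\delta_k}\) in \(\mathcal{M}(\operatorname{cl}\O_{\delta_*};\Rn)\). Each cluster point \(\mu\) is supported in \(\operatorname{cl}\O\), satisfies \(\operatorname{div}\mu=f\), and obeys
\[
    \|\mu\|_{\mathcal{M}(\operatorname{cl}\O;\Rn)}\leq\lim_{k\to\infty}\|\sigma_k\|_{L^1(E_{\delta_k},\lambda_{\delta_k})}=m_0.
\]
By~\eqref{eq:m0-local-flux}, \(m_0\) is the minimum of the measure problem, so the inequality is an equality and \(\mu\) is optimal.

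Since the family is indexed by \(\delta\), relative compactness is understood sequentially, which is all the proposition provides. No step is a genuine obstacle. The only points needing care are the membership \(\sigma_\delta\in Y^1_\delta(f)\) in Step 2 and the conversion of the value convergence into the \(\limsup\) bound in Step 1, which relies on \(|\Od|\to|\O|\).
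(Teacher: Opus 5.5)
Your proposal is correct and is essentially the paper's proof. It uses the same squeeze between \eqref{eq:sparse-flux-lower-bound} and \eqref{eq:sparse-flux-upper-bound}, via \eqref{eq:sparse-value-convergence} and $|\Od|\to|\O|$, followed by Proposition~\ref{prop:normalized-flux-recovery}. Your checks that $\sigma_\delta\in Y^q_\delta(f)\subset Y^1_\delta(f)$ and that the optimal complementary energy is finite only make explicit what the paper leaves tacit.

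One remark is loose: the value convergence bounds $\|\sigma_\delta\|_{L^1(E_\delta,\lambda_\delta)}$ only for small $\delta$, that is, along sequences $\delta_k\downarrow0$, not on all of $(0,\delta_*]$. That sequential bound is all the proposition needs.
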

\begin{proof}
By \eqref{eq:sparse-flux-upper-bound}, \eqref{eq:sparse-value-convergence}, and
\(|\Od|\to|\O|\),
\[
    \limsup_{\delta\downarrow0}
    \|\sigma_\delta\|_{L^1(E_\delta,\lambda_\delta)}
    \leq m_0.
\]
The reverse inequality follows from~\eqref{eq:sparse-flux-lower-bound}. 
Therefore,
\[
    \|\sigma_\delta\|_{L^1(E_\delta,\lambda_\delta)}
    \to m_0.
\]
The remaining assertions follow from Proposition~\ref{prop:normalized-flux-recovery}.
\end{proof}

\begin{remark} 
For completeness, Appendix~\ref{app:uncapped-exact-localization} shows that removing the pointwise
conductivity cap produces an exact reference problem whose reduced
value equals \(m_0\) and is thus independent of the interaction horizon. 
\end{remark}

%% file: 05_discussion.tex
\section{Discussion: localization, homogenization, and local free-material optimization} \label{sec:discussion} 

The preceding results describe three distinct material/localization regimes.  
%
In the sparse regime \(\theta_\delta\to 0\), the limiting problem is the Lipschitz-constrained problem identified in Section~\ref{sec:intermediate-material}
and further characterized in Section~\ref{sec:sparse-flux-recovery}.
This is the scalar diffusion analogue of the vanishing-material limits that arise in
classical optimal design, where compliance problems under increasingly small material budgets lead to singular or measure-valued optimal design problems; see, for example, 
\cite{allaire2012shape,cherkaev2012variational,bendsoe2013topology} for the homogenization and structural optimization background. 
In the elasticity setting, the corresponding vanishing-weight limit is closely related 
to Michell-type truss problems~\cite{michell1904a}; see, for example, \cite{strang1983hencky,bendsoe1993michell,Olb18,bouchitte2008michell}. 
The nonlocal scalar version, including its measure-valued flux formulation, was 
studied in~\cite{evgrafov2021nonlocal}.

In the saturated regime \(\theta_\delta\to \theta \in [1,+\infty)\), see Subsection~\ref{subsec:saturated}, it becomes the isotropic \(p\)-Dirichlet energy. 
In the intermediate regime, however, the limiting material retains a directional internal
variable. 

Throughout the rest of this section, we consider the genuinely intermediate range \[ 0<\theta<1. \]

\subsection{Localization with a directional material variable}
The limiting upper-tail functional appearing in the intermediate regime admits the
representation 
\[ 
  \mathcal{E}_\theta(u) = \sup_{a\in\mathcal{A}_\theta} \int_\O W_a(x,\nabla u(x))\dx, 
\] 
where the local constitutive energy density is 
\[\begin{aligned}
  W_a(x,\xi) &= 
  \frac{|\O|}{p\theta} \int_{\mathbb{S}^{d-1}} a(x,s)|\xi\cdot s|^p \,\mathrm{d}\nu(s),
  \qquad\text{and}\\
  \mathcal{A}_\theta &= \left\{ a\in L^\infty( \O\times\mathbb{S}^{d-1} ): 
  0\leq a\leq1,\quad \frac{1}{|\O|} \int_\O\int_{\mathbb{S}^{d-1}} a(x,s) \,\mathrm{d}\nu(s)\,\mathrm{d}x \leq\theta \right\}. 
\end{aligned}\] 
Thus the limiting state equation is local in the spatial variable, but the admissible 
constitutive laws are parameterized by a density on \(\O\times\mathbb{S}^{d-1}\). 
In this sense, localization eliminates the bond length while retaining the bond direction.
The constraint on \(a\) is global in both position and direction. 
Consequently, the local directional mass 
\[ 
  \vartheta_a(x) = \int_{\mathbb{S}^{d-1}} a(x,s)\,\mathrm{d}\nu(s) 
\] 
may vary with \(x\), subject to 
\[ 
  0\leq\vartheta_a(x)\leq 1, \qquad 
  \frac{1}{|\O|} \int_\O\vartheta_a(x)\,\mathrm{d}x \leq\theta. 
\]
The field \(\vartheta_a\) may be viewed as a local material density, which is often used as
a design variable in optimal design problems, whereas the normalized measure 
\[ \frac{a(x,s)}{\vartheta_a(x)}\,\mathrm{d}\nu(s), \qquad \vartheta_a(x)>0, \]
describes how this material is distributed among bond directions. 
For general \(p\), the resulting constitutive law need not be represented by a 
second-order tensor.
Instead, for each fixed admissible \(a\), it is described by the convex, positively \(p\)-homogeneous energy density \(\xi\mapsto W_{a}(x,\xi)\). 
The quadratic case is therefore special in that the directional occupation can be identified with a conductivity tensor.

\subsection{The quadratic case and local free-material optimization}
Let \(p=2\). 
For \(a\in\mathcal{A}_\theta\), define the normalized conductivity tensor 
\[ 
  K_a(x) = d\int_{\mathbb{S}^{d-1}} a(x,s)s\otimes s \,\mathrm{d}\nu(s). 
\] 
The normalization factor \(d\) is chosen so that \( K_1=I, \) because 
\( \int_{\mathbb{S}^{d-1}} s\otimes s\,\mathrm{d}\nu(s) = \frac{1}{d} I. \)
The state energy associated with a fixed occupation density then takes the form 
\[ 
\int_\O W_a(x,\nabla u(x))\dx = 
  \frac{|\O|}{2d\theta} \int_\O \nabla u(x)\cdot K_a(x)\nabla u(x) \,\mathrm{d}x.
\] 
Moreover, 
\[ 
  \operatorname{tr}K_a(x) = d\int_{\mathbb{S}^{d-1}} a(x,s)\,\mathrm{d}\nu(s) 
   = d\vartheta_a(x), 
\] 
and therefore the material budget constraint becomes 
\[ \int_\O \operatorname{tr}K_a(x)\,\mathrm{d}x \leq d\theta|\O|. \] 
The intermediate limit can therefore be regarded as a local free-material optimization problem with an integral trace constraint.
It is, however, not the unrestricted trace-constrained free-material problem. 
The admissible tensors belong to the class 
\begin{equation}\label{eq:sph_mom}
\mathcal{M} = \left\{ d\int_{\mathbb{S}^{d-1}} \alpha(s)s\otimes s\,\mathrm{d}\nu(s): 
      \alpha\in L^\infty(\mathbb{S}^{d-1}), \ 0\leq \alpha\leq1 \right\}. 
\end{equation}
In particular, every \(K\in\mathcal{M}\) satisfies \( 0\preceq K \preceq I, \)\footnote{Note that the upper bound \(K\preceq I\) follows from the directional cap \(\alpha\leq1\).}
where \(\preceq\) denotes the Loewner order on symmetric matrices,
and 
\( \operatorname{tr}K = d\int_{\mathbb{S}^{d-1}} \alpha(s)\,\mathrm{d}\nu(s). \)
 
Thus the bond model permits anisotropic redistribution of the available conductivity, 
but it does not permit arbitrary concentration of the trace into a singular directional 
measure. 
By contrast, an unrestricted positive-semidefinite trace-constrained free-material 
problem admits all tensors satisfying 
\[ K\succeq0, \qquad \operatorname{tr}K=d\vartheta \] 
at a point having local material density \(\vartheta\). 
Under a single loading direction \(e\), such a class permits the rank-one tensor 
\[ K=d\vartheta\,e\otimes e. \] 
The bond-induced class is therefore a restricted local free-material class whose 
additional constraint is encoded by the bounded spherical moment representation~\eqref{eq:sph_mom}.

\subsection{Comparison with classical composite bounds} 
The bond material budget constraint in the present model should not be identified with 
the spatial volume-fraction constraint of a classical two-phase composite. 
In a void-solid conductivity problem, a fraction \(\vartheta\) of physical space is 
occupied by the conducting phase. 
Here, by contrast, \(\vartheta\) controls the occupation of position-direction space
\( \O\times\mathbb{S}^{d-1}. \) 
A material may consequently occupy only selected directions at each spatial point. 
This distinction is already visible in a homogeneous two-dimensional example. 
Let \(d=2\), fix a unit vector \(e\), and let \(\alpha_\vartheta\) be the characteristic 
function of the two angular caps centred at \(\pm e\) whose total normalized 
angular measure is \(\vartheta\).
To compute the resulting material tensor, 
we introduce the coordinates such that \(e=(1,0)\), \(e^\perp=(0,1)\), and write 
\[ 
  s(\varphi) = \cos\varphi\,e+\sin\varphi\,e^\perp, \qquad 
  \mathrm{d}\nu(\varphi)=\frac{\mathrm{d}\varphi}{2\pi}. 
\] 
Set \(\beta=\pi\vartheta/2\), and let \(\alpha_\vartheta\) be the characteristic function of 
\[ [-\beta,\beta] \cup [\pi-\beta,\pi+\beta]. \] 
This set has normalized angular measure \(\vartheta\). 
Since \(s(\varphi+\pi)\otimes s(\varphi+\pi) =s(\varphi)\otimes s(\varphi)\), we have 
\[  
  K_{\alpha_\vartheta} = 2\int_{\mathbb{S}^1} \alpha_\vartheta(s)s\otimes s\,\mathrm{d}\nu(s) 
  = \frac{2}{\pi} \int_{-\beta}^{\beta} s(\varphi)\otimes s(\varphi)\,d\varphi. 
\] 
The mixed term vanishes by symmetry, while 
\[ \begin{aligned} 
  \frac{2}{\pi} \int_{-\beta}^{\beta} \cos^2\varphi\,\mathrm{d}\varphi &= 
  \vartheta+\frac{\sin(\pi\vartheta)}{\pi}, \\ 
  \frac{2}{\pi} \int_{-\beta}^{\beta} \sin^2\varphi\,d\varphi &= 
  \vartheta-\frac{\sin(\pi\vartheta)}{\pi}. 
\end{aligned} \] 
Consequently, the associated normalized conductivity is 
\[ K_{\alpha_\vartheta} = \left( \vartheta+\frac{\sin(\pi\vartheta)}{\pi} \right)e\otimes e + \left( \vartheta-\frac{\sin(\pi\vartheta)}{\pi} \right)e^\perp\otimes e^\perp. 
\]
In particular, 
\[ e\cdot K_{\alpha_\vartheta}e = \vartheta+ \frac{\sin(\pi\vartheta)}{\pi}. \] 
For \(0<\vartheta<1\), 
\[ \vartheta < \vartheta+ \frac{\sin(\pi\vartheta)}{\pi} < 2\vartheta. \] 
The first inequality shows that the directional conductivity exceeds the Voigt bound 
in direction \(e\) for a classical mixture of unit conductor and void having spatial
conducting fraction \(\vartheta\).
This is not a violation of the classical bound, since the bond occupation fraction and 
the spatial volume fraction describe different admissible material classes. 
The second inequality compares the bond-induced tensor with unrestricted 
trace-constrained local free-material optimization. 
Indeed, 
\( \operatorname{tr}K_{\alpha_\vartheta}=2\vartheta, \)
and the unrestricted positive-semidefinite free-material optimization can place the entire
trace in direction \(e\), giving 
\(K_{\text{FMO}} = 2\vartheta\,e\otimes e. \)
For a homogeneous uniaxial experiment in direction \(e\), 
after omitting the common positive normalization factors, the corresponding scalar compliances satisfy 
\[ \frac{1}{2\vartheta} < \frac{1}{ \vartheta+\sin(\pi\vartheta)/\pi } < \frac{1}{\vartheta}. \]
For this homogeneous uniaxial experiment,
the directional bond model lies between unrestricted trace-constrained free-material optimization and classical void-solid composite design. 
The comparison does not identify the class \(\mathcal{M}\) with a classical
conductivity \(G\)-closure. 
Such an identification would require specifying constituent phases and proving that the 
spherical moment tensors are realizable as effective tensors of spatial microstructures
with the corresponding phase proportions. 
Conversely, failure of a tensor to belong to a given classical \(G\)-closure does not 
imply that it is physically inadmissible for the bond model, since the two models are 
based on different material budgets and different notions of material occupation.

\subsection{Further questions} 
The preceding observations suggest several questions that are not addressed by the 
state-energy convergence proved above. 
First, it would be useful to characterize the class \(\mathcal{M}\), and more generally 
its fixed-mass sections 
\[ 
\mathcal{M}_\vartheta = \left\{ d\int_{\mathbb{S}^{d-1}} \alpha(s)s\otimes s\,\mathrm{d}\nu(s): 0\leq \alpha\leq1,\quad \int_{\mathbb{S}^{d-1}} \alpha\,\mathrm{d}\nu=\vartheta \right\}. 
\] 
The positivity, trace, and upper-bound conditions above are immediate, but an intrinsic
characterization of all tensors admitting such a bounded spherical moment representation
would clarify the precise relation with unrestricted local free-material design.
Second, one may ask whether some or all tensors in \(\mathcal{M}_\vartheta\) can be
realized by classical composites made from prescribed constituent phases. 
This is a separate homogenization problem and depends on how a directional bond occupation
is to be translated into a spatial microstructure and its phase fractions. 
Third, Theorem~\ref{thm:intermediate-localization} concerns
optimal values and states, but it would be interesting to describe the convergence of optimal occupations or conductivities themselves. Weak-star compactness of the occupations is not sufficient for this purpose, since the coefficients and the associated strains may develop correlated oscillations. A joint material-state limit would therefore require additional homogenization or compensated-compactness arguments, which are avoided here by eliminating the occupation density before localization.
Finally, the uniform radial kernel used here produces the uniform directional measure 
\(\nu\). 
More general kernels should lead to weighted directional reference measures, and 
therefore to different moment classes and different local constitutive restrictions. 
This may provide a way to encode preferred directions already at the level of the 
underlying nonlocal interaction law.

%% file: appx_A.tex
\section{Symmetric and antisymmetric two-point fields}\label{app:two-point}
We recall the notation for symmetric and antisymmetric two-point fields.
Let \(S\subset\Rn\times\Rn\) be an open set satisfying 
\[ S=S^{\mathrm T}, \qquad S^{\mathrm T} = \left\{ (x,x'):(x',x)\in S \right\}. \] 
For a measurable two-point field \(\phi\), we write 
\[ \phi^{\mathrm T}(x,x') = \phi(x',x), \] 
and define its symmetric and antisymmetric parts by 
\[ \phi^{\mathrm s} = \frac{\phi+\phi^{\mathrm T}}{2}, \qquad 
\phi^{\mathrm a} = \frac{\phi-\phi^{\mathrm T}}{2}. 
\] 
We denote the  closed subspaces of \(L^r(S)\) containing only symmetric (resp., only antisymmetric) functions by \(L^r_{\sym}(S)\) (resp.\ \(L^r_{\asym}(S)\)).
\begin{lemma}[Symmetric and antisymmetric two-point fields] \label{lem:symmetric-antisymmetric-fields} 
  Let \(1\leq r\leq+\infty\). 
  Then the following statements hold. 
  \begin{enumerate} 
  \item The maps \( P_{\sym}\phi =\phi^{\sym}\) and \(P_{\asym}\phi =\phi^{\asym} \) are contractive projections on \(L^r(S)\). 
  In particular, \[ L^r(S) = L^r_{\sym}(S) \oplus L^r_{\asym}(S). \] 
  \item If \(1\leq r<+\infty\), then \( C_c^\infty(S)\cap L^r_{\sym}(S) \) is dense in \(L^r_{\sym}(S)\), and \( C_c^\infty(S)\cap L^r_{\asym}(S) \) is dense in \(L^r_{\asym}(S)\). 
  \item Let \(1\leq r\leq+\infty\), and let \(r'\) be its H{\"o}lder conjugate. 
  If \( \phi\in L^r_{\asym}(S)\), \(\psi\in L^{r'}_{\sym}(S), \) then 
  \( \int_S\phi\psi=0. \) 
  \item If \(1\leq r<+\infty\) and \(\phi\in L^{r'}_{\mathrm a}(S)\), then 
  \[ \|\phi\|_{L^{r'}(S)} = \sup_{\substack{ \psi\in L^r_{\mathrm a}(S)\\ \|\psi\|_{L^r(S)}\leq1 }} \int_S\phi\psi. \] 
  An analogous identity holds with the symmetric and antisymmetric spaces interchanged. 
  \item Let \(1\leq r<+\infty\), and let \(r'\) be its H{\"o}lder conjugate. Suppose that \(\phi\in L^{r'}(S)\). If \[ \int_S\phi\psi=0 \qquad \text{for every } \psi\in C_c^\infty(S)\cap L^r_{\asym}(S), \] then \(\phi\in L^{r'}_{\sym}(S). \)
  An analogous identity holds with the symmetric and antisymmetric spaces interchanged. 
  \end{enumerate}
\end{lemma}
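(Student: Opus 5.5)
The five items of Lemma~\ref{lem:symmetric-antisymmetric-fields} are elementary, and I would prove them in order, using only the fact that the transposition \(T:(x,x')\mapsto(x',x)\) is a measure-preserving involution of \(S\).

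\textbf{Item 1.} Invariance of Lebesgue measure under \(T\) gives \(\|\phi^{\mathrm T}\|_{L^r(S)}=\|\phi\|_{L^r(S)}\) for every \(1\leq r\leq+\infty\). The triangle inequality then yields \(\|\phi^{\sym}\|,\|\phi^{\asym}\|\leq\|\phi\|\). A direct computation shows \(P_\sym^2=P_\sym\), \(P_\asym^2=P_\asym\), \(P_\sym+P_\asym=I\) and \(P_\sym P_\asym=0\), which gives the direct sum. The images are closed because each is the kernel of a bounded operator, namely \(P_\asym\) and \(P_\sym\) respectively.

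\textbf{Item 2.} Take \(\phi\in L^r_\sym(S)\) and choose \(\psi_n\in C_c^\infty(S)\) with \(\psi_n\to\phi\) in \(L^r\). Since \(S\) is open, this is possible for \(r<\infty\). Because \(S=S^{\mathrm T}\) and \(T\) is a linear isometry of \(\R^{2d}\), the field \(\psi_n^{\mathrm T}\) is again smooth with compact support in \(S\). Hence \(P_\sym\psi_n\in C_c^\infty(S)\cap L^r_\sym(S)\). Contractivity gives \(\|P_\sym\psi_n-\phi\|=\|P_\sym(\psi_n-\phi)\|\leq\|\psi_n-\phi\|\to0\). The antisymmetric case is identical.

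\textbf{Item 3.} Changing variables by \(T\) gives \(\int_S\phi\psi=\int_S\phi^{\mathrm T}\psi^{\mathrm T}=-\int_S\phi\psi\), so the integral vanishes. The integral is finite by H\"older's inequality.

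\textbf{Item 4.} The inequality ``\(\geq\)'' is H\"older's inequality. For ``\(\leq\)'', I would use the standard norming of \(L^{r'}\) by \(L^r\), valid for \(1\leq r<\infty\): for \(\varepsilon>0\) there is \(\psi\) with \(\|\psi\|_{L^r}\leq1\) and \(\int\phi\psi\geq\|\phi\|_{L^{r'}}-\varepsilon\). One could take \(\psi\) explicitly proportional to \(|\phi|^{r'-1}\operatorname{sgn}\phi\); when \(r'=\infty\) one instead uses a normalized indicator of a set where \(|\phi|\) is nearly maximal. Split \(\psi=\psi^\sym+\psi^\asym\). By item 3, \(\int\phi\psi^\sym=0\), so \(\int\phi\psi^\asym=\int\phi\psi\), and item 1 gives \(\|\psi^\asym\|\leq1\). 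This settles the antisymmetric case. The symmetric case is the same with the roles of the two projections exchanged.

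\textbf{Item 5.} The plan is to show \(\phi^\asym=0\). For \(\psi\in C_c^\infty(S)\cap L^r_\asym(S)\), item 3 gives \(\int\phi^\sym\psi=0\), and hence \(\int\phi^\asym\psi=\int\phi\psi=0\). By the density in item 2 and H\"older's inequality, this extends to all \(\psi\in L^r_\asym(S)\). Item 4 then gives \(\|\phi^\asym\|_{L^{r'}}=0\), so \(\phi=\phi^\sym\in L^{r'}_\sym(S)\). The interchanged statement is proved the same way.

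\textbf{Expected obstacle.} Nothing here is difficult. The only points needing care are the endpoint \(r'=\infty\), that is \(r=1\), in item 4, which is handled by the near-maximal indicator trick, and, in item 2, checking that \(S=S^{\mathrm T}\) keeps transposed test functions supported in \(S\).
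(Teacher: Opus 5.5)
Your proposal is correct and follows essentially the same route as the paper. The shared steps are: transposition as a measure-preserving isometry, giving contractivity and the direct sum; projecting a smooth approximating sequence for density; a change of variables for orthogonality; splitting a norming function and discarding its symmetric part for the restricted duality formula; and density plus that formula to conclude \(\phi^{\asym}=0\). Your only addition is the explicit handling of the endpoint \(r=1\) in item~4, which the paper subsumes under ``the usual \(L^{r'}\)-duality formula.''
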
 
\begin{proof}
  1. The transposition map \( R\phi =\phi^{\mathrm T} \) is a linear isometry 
  of \(L^r(S)\), because \(S=S^{\mathrm T}\).
  Consequently, 
  \[ \begin{aligned} 
    \|P_{\sym}\phi\|_{L^r(S)} &\leq \frac12 \left( \|\phi\|_{L^r(S)} + \|\phi^{\mathrm T}\|_{L^r(S)} \right) = \|\phi\|_{L^r(S)}, \\ 
    \|P_{\asym}\phi\|_{L^r(S)} &\leq \frac12 \left( \|\phi\|_{L^r(S)} + \|\phi^{\mathrm T}\|_{L^r(S)} \right) = \|\phi\|_{L^r(S)}. 
  \end{aligned} \]
  The identities \[ \phi=P_{\mathrm s}\phi+P_{\mathrm a}\phi, \qquad L^r_{\mathrm s}(S)\cap L^r_{\mathrm a}(S)=\{0\}, \] prove the first assertion. 
  
  \smallskip\noindent
  2. Let now \(1\leq r<+\infty\), and suppose that \(\phi\in L^r_{\asym}(S)\). Choose \(\phi_n\in C_c^\infty(S)\) such that 
  \( \phi_n\to\phi \) in \(L^r(S). \) 
  Since \(S\) is symmetric, 
  \[ P_{\asym}\phi_n = \frac{\phi_n-\phi_n^{\mathrm T}}2 \in C_c^\infty(S)\cap L^r_{\asym}(S). \] 
  Contractivity of \(P_{\asym}\) gives 
  \[ \|P_{\asym}\phi_n-\phi\|_{L^r(S)} = \|P_{\asym}(\phi_n-\phi)\|_{L^r(S)} \leq \|\phi_n-\phi\|_{L^r(S)} \to 0. \] 
  The symmetric case is identical. 
  
  \smallskip\noindent
  3. If \(\phi\) is antisymmetric and \(\psi\) is symmetric, invariance of \(S\) under endpoint interchange gives \[ \int_S\phi\psi = \int_S\phi^{\mathrm T}\psi^{\mathrm T} = -\int_S\phi\psi, \] and hence the integral vanishes. 
  
  \smallskip\noindent
  4. Let \(\phi\in L^{r'}_{\asym}(S)\). 
  By the usual \(L^{r'}\)-duality formula and the preceding orthogonality, 
  \[ \begin{aligned} 
    \|\phi\|_{L^{r'}(S)} &= \sup_{\substack{ \psi\in L^r(S)\\ \|\psi\|_{L^r(S)}\leq1 }} \int_S\phi\psi = \sup_{\substack{ \psi\in L^r(S)\\ \|\psi\|_{L^r(S)}\leq1 }} \int_S\phi\,P_{\asym}\psi \leq \sup_{\substack{ \eta\in L^r_{\asym}(S)\\ \|\eta\|_{L^r(S)}\leq1 }} \int_S\phi\eta \leq \|\phi\|_{L^{r'}(S)}. 
  \end{aligned} \] 
  This proves the asserted equality. The symmetric case is analogous.

  \smallskip\noindent
  5. For the final assertion, suppose that \(\phi\) annihilates \(C_c^\infty(S)\cap L^r_{\asym}(S)\). 
  By the density statement, 
  \[ \int_S\phi\psi=0 \qquad \text{for every }\psi\in L^r_{\asym}(S). \] 
  Since 
  \[ \int_S\phi\psi = \int_S P_{\asym}\phi\,\psi \qquad \text{for every }\psi\in L^r_{\asym}(S), \] 
  the restricted duality formula gives 
  \( \|P_{\asym}\phi\|_{L^{r'}(S)}=0. \)
  Thus \(P_{\asym}\phi=0\), and \(\phi\in L^{r'}_{\sym}(S)\). 
  The other implication is analogous.
\end{proof}

%% file: appx_B.tex
\section{Sparse local optimal design duality and measure relaxation}
\label{appx:loc_rlx}

In this appendix we establish the duality relations between the Lipschitz-constrained state problem~\eqref{eq:m0-local-state}, the \(L^1\)-flux problem, and its measure-valued relaxation~\eqref{eq:m0-local-flux}. 
The argument uses two perturbation spaces because the relaxation and the final state space require different dual topologies.

We first allow the divergence datum to vary in \(\mathcal{D}'(\O)\). 
This topology is natural for identifying the weak-star relaxation of the \(L^1\)-flux functional and yields a dual problem over \(C_c^\infty(\O)\). 
We then restrict the perturbation variable to \(L^1(\O)\). 
The corresponding dual space is \(L^\infty(\O)\), and finiteness of the conjugate 
identifies its effective domain with \(\operatorname{Lip}_0(\O)\). 
The second perturbation therefore recovers the precise state space appearing in~\eqref{eq:m0-local-state}, while the first identifies the measure-valued relaxation 
in~\eqref{eq:m0-local-flux}.

To this end, let \(\O\subset\Rn\) be a bounded Lipschitz domain, and let \(f \in \mathcal{D}'(\O)\) be a given distribution.
We define the following convex functional \(J_f:\mathcal{M}(\operatorname{cl}\O;\R^d)\to \overline{\R}\):
\begin{equation*}
    \begin{aligned}
    J_f(\mu) &= \begin{cases}
        \lVert \sigma\rVert_{L^1(\O;\Rn)}, \quad&\text{if\ } \mu = \sigma\mathcal{L}^d, \sigma \in L^1(\O;\Rn), \text{\ and\ } \operatorname{div}\sigma = f, \;\text{in\ } \mathcal{D}'(\O),\\
        +\infty,\quad&\text{otherwise}.
    \end{cases}
\end{aligned}
\end{equation*}
Note that we enforce the constraint \(\operatorname{div}\mu = f\) distributionally, that is,
\begin{equation*}
    -\int_{\operatorname{cl}\O} \nabla\phi(x)\cdot\mathrm{d}\mu(x) = \langle f, \phi \rangle,\qquad \forall \phi \in \mathcal{D}(\O).
\end{equation*}
In particular, whenever \(\sigma \in W^1(\operatorname{div};\O)\), we identify \(\operatorname{div}\sigma\) and \(\operatorname{div}[\sigma\mathcal{L}^d]\).
We begin by recalling that the convex functional defined above is proper for each \(f\in L^1(\O)\).
\begin{proposition}\label{claim:distr_diver_surj}
    The distributional divergence operator \(\Div: D(\Div)=W^1(\operatorname{div}; \O)\subset L^1(\O;\Rn)\to L^1(\O)\) is surjective
    and admits a bounded right inverse.
\end{proposition}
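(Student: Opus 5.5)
The plan is to prove surjectivity with a bounded right inverse of \(\Div: W^1(\operatorname{div};\O)\subset L^1(\O;\Rn)\to L^1(\O)\). Here \(W^1(\operatorname{div};\O)=\{\sigma\in L^1(\O;\Rn):\Div\sigma\in L^1(\O)\}\), normed by the graph norm. The idea is to construct \(\sigma\) explicitly by convolution with the Newtonian kernel, after extending the datum by zero. Given \(g\in L^1(\O)\), let \(g_0\) be its zero extension to \(\Rn\), and let \(\Phi\) be the fundamental solution of the Laplacian, so that \(\Delta\Phi=\delta_0\). Set
\[
  \sigma = (\nabla\Phi)*g_0,\qquad \nabla\Phi(z)=\frac{z}{|\Sn|\,|z|^d}.
\]
Then \(\Div\sigma=\Delta(\Phi*g_0)=g_0\) in \(\mathcal{D}'(\Rn)\), and in particular \(\Div\sigma=g\) in \(\mathcal{D}'(\O)\). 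The map \(g\mapsto\sigma|_\O\) is linear.

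The key estimate is the \(L^1\) bound. Since \(|\nabla\Phi(z)|=c_d|z|^{1-d}\) is locally integrable, and \(\O\) is bounded with diameter \(R\), only the restriction of \(\nabla\Phi\) to \(B_R\) matters when \(x\in\O\) and \(y\in\O\). Young's inequality for convolutions then gives
\[
  \|\sigma\|_{L^1(\O;\Rn)}\leq \|\nabla\Phi\|_{L^1(B_R)}\,\|g\|_{L^1(\O)} = c_d|\Sn|R\,\|g\|_{L^1(\O)}.
\]
Combined with \(\Div\sigma=g\), this yields the graph-norm bound \(\|\sigma\|_{W^1(\operatorname{div})}\leq(1+CR)\|g\|_{L^1(\O)}\). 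So \(g\mapsto\sigma|_\O\) is a bounded right inverse, and surjectivity follows.

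The only mildly delicate step is verifying \(\Div\sigma=g_0\) distributionally when \(g\) is merely \(L^1\). I would handle it by Fubini. For \(\phi\in C_c^\infty\), the identity \(-\int\sigma\cdot\nabla\phi=\int g_0\,(\Phi*\Delta\phi)\) holds, and \(\Phi*\Delta\phi=\phi\). Both sides are absolutely integrable because \(\nabla\Phi\in L^1_{\mathrm{loc}}\) and \(\nabla\phi\) is compactly supported. Alternatively, one proves the identity for \(g\in C_c^\infty\) and extends by density using the bound above.

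Notably, no boundary conditions are imposed, and the Lipschitz regularity of \(\O\) is not needed for this step.
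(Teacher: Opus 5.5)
Your proof is correct, but it takes a different route from the paper's. The paper avoids potential theory altogether. It places \(\O\) in a box \((a,b)\times Q'\), extends \(f\) by zero, and integrates along the first coordinate, \(\sigma=(\int_a^{x_1}f_0(t,x_2,\dots,x_d)\,\mathrm dt,0,\dots,0)\). The divergence identity then follows from Fubini and the one-dimensional fundamental theorem of calculus, and the bound \(\|\sigma\|_{L^1}\le|b-a|\,\|f\|_{L^1}\) is immediate.

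Your Newtonian-potential construction \(\sigma=\nabla\Phi*g_0\) instead needs the fundamental solution, the identity \(\Phi*\Delta\phi=\phi\), and the Fubini justification you sketch. That justification is fine, since \(\sup_y\int_K|\nabla\Phi(x-y)|\,\mathrm dx<\infty\) for compact \(K\). Your constant is \(\|\nabla\Phi\|_{L^1(B_R)}=R=\operatorname{diam}\O\), because \(c_d|\Sn|=1\); this is comparable to the paper's box width.

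In exchange, your field is a gradient, is isotropic, and satisfies \(\operatorname{div}\sigma=g_0\) on all of \(\Rn\). It is also more integrable than \(L^1\): it lies in \(L^r\) on \(\O\) for every \(r<d/(d-1)\). The paper's unidirectional field is in general only \(L^1\), since it inherits the transverse integrability of \(x'\mapsto\int_a^b|f_0(t,x')|\,\mathrm dt\). Your construction also maps measure data to \(L^1\) fields with the same bound, which line integration does not: a point mass produces a field concentrated on a half-line.

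For the use made of this result in Proposition~\ref{prop:normal2}, any bounded linear right inverse on \(L^1(\O)\) suffices. Either construction works there, and the paper's is the more elementary one.
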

\begin{proof}
    Let us fix an arbitrary \(f\in L^1(\O)\).
    Let \((a,b) \times Q' \subset \R\times \R^{d-1}\) be an axis aligned rectangular box containing \(\O\), and let \(f_0\in L^1((a,b) \times Q')\) be an extension of \(f\) by zero outside of \(\O\).
    Let \(\sigma_1(x_1,x_2,\dots,x_d) = \int_{a}^{x_1} f_0(t,x_2,\dots,x_d)\,\mathrm{d}t\), and let further
    \(\sigma(x) = (\sigma_1(x), 0, \dots, 0)\).
    Note that 
    \[|\sigma(x_1,x_2,\dots,x_d)|\leq \int_a^b|f_0(t,x_2,\dots,x_d)|\,\mathrm{d}t,\] 
    and consequently, owing to Fubini's theorem, \(\sigma \in L^1((a,b)\times Q';\Rn)\) with
    \(\lVert \sigma\rVert_{L^1(\O;\Rn)} \leq \lVert \sigma\rVert_{L^1((a,b)\times Q';\Rn)} \leq |b-a| \lVert f_0\rVert_{L^1((a,b)\times Q')} = |b-a| \lVert f\rVert_{L^1(\O)}\).
    Furthermore, for each \(\phi \in C^\infty_c(\O)\) we have
    \begin{equation*}
    -\langle \sigma, \nabla \phi \rangle_{\O}
    =-\langle \sigma_1, \partial_{x_1}\phi \rangle_{\O}
    =\langle f_0, \phi \rangle_{\O}
    =\langle f, \phi \rangle_{\O},
    \end{equation*}
    where we have used Fubini's theorem and the fundamental theorem of calculus in 1D. Consequently, \(\sigma \in W^1(\operatorname{div};\O)\) with \(\operatorname{div}\sigma = f\) in \(\mathcal{D}'(\O)\).
    The mapping \( f\mapsto \sigma|_\O \) is linear, and the preceding estimate shows that it defines a bounded right inverse of the distributional divergence.
\end{proof}

Let us now introduce a proper convex perturbation functional
\(\Phi_f: \mathcal{M}(\closure\O;\Rn) \times \mathcal{D}'(\O) \to \overline{\R}\),
\(\Phi_f(\mu,g) = J_{f+g}(\mu)\).
Its convex conjugate \(\Phi_f^*: C(\closure \O;\Rn) \times C^\infty_c(\O) \to \overline{\R}\)
is given by
\begin{equation*}
\begin{aligned}
\Phi_f^*(v,u) &= \sup_{\substack{%
\mu \in \mathcal{M}(\closure\O;\Rn)\\
g \in \mathcal{D}'(\O)}}[\langle \mu, v \rangle + \langle g, u\rangle
- J_{f+g}(\mu)]
\\&= \sup_{\substack{%
\sigma \in L^1(\O;\Rn)\\
g =\Div\sigma - f}}[\langle v - \nabla u, \sigma\rangle_\O
- \lVert \sigma\rVert_{L^1(\O)} - \langle f, u\rangle]
\\&=\begin{cases}
-\langle f, u\rangle, &\quad \text{if\ } \lVert v-\nabla u\rVert_{L^\infty(\O;\Rn)}\leq 1,\\
+\infty, &\quad\text{otherwise},
\end{cases}
\end{aligned}
\end{equation*}
and the biconjugate \(\Phi_f^{**}: \mathcal{M}(\closure\O;\Rn) \times \mathcal{D}'(\O) \to \overline{\R}\) by
\begin{equation*}
\begin{aligned}
\Phi^{**}_f(\mu,g) &= \sup_{\substack{%
v \in C(\closure\O;\Rn)\\
u \in C^\infty_c(\O)}}[\langle \mu, v \rangle + \langle g, u\rangle
- \Phi_f^*(v,u)]
= \sup_{\substack{%
v \in C(\closure\O;\Rn)\\
u \in C^\infty_c(\O)\\
\lVert v-\nabla u\rVert_{L^\infty(\O;\Rn)}\leq 1}}
[\langle \mu, v \rangle + \langle f+g, u\rangle]
\\&= \sup_{\substack{%
w=v-\nabla u \in C(\closure\O;\Rn)\\
u \in C^\infty_c(\O)\\
\lVert w\rVert_{L^\infty(\O;\Rn)}\leq 1}}
[\langle \mu, w + \nabla u\rangle + \langle f+g, u\rangle]
\\&=\begin{cases}
        \lVert\mu\rVert_{\mathcal{M}(\operatorname{cl}\O;\Rn)}, \quad&\text{if\ } \operatorname{div}\mu = f+g, \;\text{in\ } \mathcal{D}'(\O),\\
         +\infty,\quad&\text{otherwise}
\end{cases}
\\&=: H_{f+g}(\mu).
\end{aligned}
\end{equation*}

Thus for each \(f\in \mathcal{D}'(\O)\) the problems \(\inf_{\mu\in \mathcal{M}(\closure\O;\Rn)} H_f(\mu) = \inf_{\mu\in \mathcal{M}(\closure\O;\Rn)} \Phi^{**}_f(\mu,0)\)
and \(\sup_{u\in C^\infty_c(\O),\lVert \nabla u\rVert_{L^\infty(\O;\Rn)}\leq 1} \langle f, u\rangle = 
\sup_{u\in C^\infty_c(\O)} [-\Phi_f^*(0,u)]\) are related via duality~\cite[Chapter III]{ekeland1976convex}.

\begin{proposition}\label{prop:normal}
    For each \(f\in L^1(\O)\) the optimal value function \(h(v) = \inf_{u\in C^\infty_c(\O)}[\Phi_f^*(v,u)]\) is finite and l.s.c.\ (in fact, continuous) at \(v=0\).
\end{proposition}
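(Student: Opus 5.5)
We compute \(h(v)\) directly from the formula for \(\Phi_f^*\) obtained above:
\[
h(v)=\inf\Bigl\{-\langle f,u\rangle_\O:\ u\in C_c^\infty(\O),\ \|v-\nabla u\|_{L^\infty(\O;\Rn)}\leq1\Bigr\}.
\]
The plan is to show that \(h\) is finite and bounded above on a norm neighbourhood of \(v=0\) in \(C(\closure\O;\Rn)\). Since \(h\) is convex, being the marginal function of the jointly convex \(\Phi_f^*\), local boundedness from above gives continuity at \(0\) by the standard result in \cite[Chapter~I]{ekeland1976convex}, and in particular lower semicontinuity.

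\emph{Upper bound near \(0\).} Take \(\|v\|_{C(\closure\O;\Rn)}\leq 1/2\). The choice \(u=0\) is admissible, since \(\|v\|_{L^\infty}\leq1\). Hence \(h(v)\leq 0\), so \(h\) is bounded above on the ball of radius \(1/2\).

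\emph{Finiteness (lower bound).} For any \(v\) with \(\|v\|_\infty\leq 1/2\), every admissible \(u\) satisfies \(\|\nabla u\|_{L^\infty}\leq \|v\|_\infty+1\leq 3/2\). Because \(u\in C_c^\infty(\O)\), its zero extension is globally Lipschitz on \(\Rn\) with \(\operatorname{Lip}(u)\leq 3/2\). This step uses that the zero extension of a compactly supported smooth function is Lipschitz on \(\Rn\) with constant \(\|\nabla u\|_\infty\); the segment argument works in \(\Rn\) after extension by zero, so convexity of \(\O\) is not needed. Since \(u\) vanishes outside \(\O\), we get \(|u(x)|\leq \tfrac32\operatorname{dist}(x,\partial\O)\leq \tfrac32\operatorname{diam}\O\). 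Therefore
\[
-\langle f,u\rangle_\O\geq -\tfrac32\operatorname{diam}(\O)\,\|f\|_{L^1(\O)},
\]
which shows \(h(v)>-\infty\). Combined with the upper bound, \(h\) is finite on the ball \(\{\|v\|_\infty\leq1/2\}\).

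\emph{Continuity.} The convexity of \(h\) follows by taking convex combinations of admissible pairs \((v_i,u_i)\), since the constraint set is convex and \(u\mapsto-\langle f,u\rangle\) is linear. A convex function that is finite and bounded above on a neighbourhood of \(0\) is continuous there. We therefore obtain that \(h\) is finite and continuous, hence l.s.c., at \(v=0\).

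The only point I expect to need care is the uniform sup-bound on admissible \(u\) used in the finiteness step. It requires \(f\in L^1(\O)\) rather than a general distribution, and this is exactly where the hypothesis of Proposition~\ref{prop:normal} enters. With continuity at \(0\) established, \cite[Chapter~III]{ekeland1976convex} then yields absence of a duality gap and existence of a minimizing measure in the relaxed problem.
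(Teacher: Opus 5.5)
Your proof is correct, and it uses the same two estimates as the paper. The first is admissibility of \(u=0\), which gives \(h\le 0\) near \(0\). The second is the bound \(\|u\|_{L^\infty}\le c\operatorname{diam}(\O)\) for \(u\in C_c^\infty(\O)\) with \(\|\nabla u\|_{L^\infty}\le c\), which, together with \(f\in L^1(\O)\), bounds \(h\) from below. Where you differ is in how continuity is obtained.

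You invoke convexity of the marginal function \(h\) and the general lemma that a convex function, finite and bounded above near a point, is continuous there. The paper does not use convexity at all. It sets \(\tilde h(c)=\inf\{-\langle f,u\rangle_\O : u\in C_c^\infty(\O),\ \|\nabla u\|_{L^\infty}\le c\}\) and notes that \(\tilde h\) is positively \(1\)-homogeneous, since the feasible sets scale as \(U_c=cU_1\). Inclusions of feasible sets then give \(\tilde h(1+\|v\|_{L^\infty})\le h(v)\le \tilde h(1-\|v\|_{L^\infty})\). Because \(\tilde h(1)=h(0)\le 0\), this reads \((1+\|v\|_{L^\infty})h(0)\le h(v)\le (1-\|v\|_{L^\infty})h(0)\), which is the explicit estimate \(|h(v)-h(0)|\le \|v\|_{L^\infty}|h(0)|\).

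The paper's route is thus elementary and quantitative, but it depends on the positive homogeneity of the Lipschitz constraint. Your route is the standard Ekeland--Temam argument and would carry over to perturbations without such scaling structure, at the cost of citing the convexity--continuity lemma. Your closing remark is also consistent with the paper: norm-continuity at \(0\) gives stability, hence a minimizing measure, in line with the ``\(\min\)'' in \eqref{eq:m0-local-flux}.
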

\begin{proof}
    For \(c\geq 0\) let \(U_c = \{\, u\in C^\infty_c(\O) \colon \lVert \nabla u\rVert_{L^\infty(\O;\Rn)} \leq c \,\}\), and consider the function
    \[
        \tilde{h}(c) = \inf_{u\in U_c} \langle -f, u\rangle_\O.
    \]
    Since \(0 \in U_c\), \(\forall c\geq 0\), we have that \(\tilde{h}(c)\leq 0\).
    On the other hand, for each \(u\in U_c\) we can write
    \[
        -\langle f, u\rangle_\O 
        \geq - \lVert u\rVert_{L^\infty(\O)} \lVert f\rVert_{L^1(\O)}
        \geq - c\operatorname{diam}(\O)\lVert f\rVert_{L^1(\O)},
    \]
    owing to H{\"o}lder's inequality and Lipschitz continuity of \(u\).
    Consequently,
    \(
        \tilde{h}(c) \geq  - c\operatorname{diam}(\O)\lVert f\rVert_{L^1(\O)}.
    \)
    Finally, \(\tilde{h}:[0,+\infty) \to \R\) is degree one homogeneous, since \(U_c = c U_1\), and
    the objective function \(u\mapsto -\langle f, u\rangle_{\O}\) is linear.
    Thus \(\tilde{h}\) is continuous at any \(c>0\).

    With this in mind, for \(\|v\|_{L^\infty(\O;\Rn)}<1\), we can write the estimate
    \[
        \tilde{h}(1+\lVert v\rVert_{L^\infty(\O;\Rn)}) \leq h(v) \leq \tilde{h}(1-\lVert v\rVert_{L^\infty(\O;\Rn)}),
    \]
    thereby concluding the proof.
\end{proof}

\begin{corollary}
    For each \(f\in L^1(\O)\)  we have the equality
    \begin{equation}\label{eq:no_gap}
    \inf_{\mu\in \mathcal{M}(\closure\O;\Rn)} H_f(\mu) = \sup_{\substack{u\in C^\infty_c(\O)\\\lVert \nabla u\rVert_{L^\infty(\O;\Rn)}\leq 1}} \langle f, u\rangle,
    \end{equation}
    that is, there is no duality gap.
\end{corollary}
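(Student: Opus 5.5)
The corollary follows from the general perturbation-duality theorem of Ekeland--Temam \cite[Chapter~III, Proposition~2.1 and Theorem~2.2]{ekeland1976convex}, with the roles of primal and dual exchanged. We regard the maximization over \(u\in C_c^\infty(\O)\) as the primal problem. Its perturbation is \((v,u)\mapsto \Phi_f^*(v,u)\) with \(v\in C(\closure\O;\Rn)\), and the value function is \(h(v)=\inf_u\Phi_f^*(v,u)\). Since \(\Phi_f^*\) is convex in \((v,u)\) jointly, \(h\) is convex. The conjugate of \(h\) with respect to the pairing \(\langle C(\closure\O;\Rn),\mathcal{M}(\closure\O;\Rn)\rangle\) is
\[
h^*(\mu)=\sup_{v,u}\bigl[\langle\mu,v\rangle-\Phi_f^*(v,u)\bigr]=\Phi_f^{**}(\mu,0)=H_f(\mu),
\]
using the biconjugate computed just before Proposition~\ref{prop:normal}. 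By the standard weak duality computation, \(-h^{**}(0)=\inf_\mu h^*(\mu)\). It remains to identify \(h(0)\):
\[
-h(0)=\sup_{u\in C_c^\infty(\O)}\bigl[-\Phi^*_f(0,u)\bigr]=\sup_{\substack{u\in C_c^\infty(\O)\\ \|\nabla u\|_{L^\infty}\leq1}}\langle f,u\rangle .
\]
Hence weak duality already gives the inequality \(\sup\leq\inf\) in \eqref{eq:no_gap}.

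For the reverse inequality we need \(h(0)=h^{**}(0)\). This is exactly normality of the primal problem, and Proposition~\ref{prop:normal} supplies it: \(h\) is finite at \(0\) and in fact continuous there, with respect to the sup-norm on \(C(\closure\O;\Rn)\). A convex function that is finite and continuous at a point is bounded above on a neighbourhood of that point. It therefore coincides there with its lower semicontinuous hull, is nonvanishing nowhere to \(-\infty\), and satisfies \(h(0)=h^{**}(0)\). Continuity additionally gives \(\partial h(0)\neq\emptyset\), so the infimum of \(H_f\) is attained. This is the "min" recorded in \eqref{eq:m0-local-flux}, although the corollary itself only requires equality of values.

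The only delicate point is the topology. The conjugate pair \((\Phi_f,\Phi_f^*)\) was taken in the duality between \(\mathcal{M}(\closure\O;\Rn)\times\mathcal{D}'(\O)\) and \(C(\closure\O;\Rn)\times C_c^\infty(\O)\). Proposition~\ref{prop:normal} controls \(h\) using \(\|v\|_{L^\infty}\), which on \(C(\closure\O;\Rn)\) is the norm whose dual is \(\mathcal{M}(\closure\O;\Rn)\). So continuity in this norm is precisely what the Ekeland--Temam criterion requires.

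A second point to check is that \(H_f=\Phi_f^{**}(\cdot,0)\) really is the conjugate of \(h\), and not merely bounded by it. This is immediate from the definitions: the sup over \((v,u)\) in \(h^*\) is the same sup that defines \(\Phi_f^{**}(\mu,0)\). So no further work is needed beyond citing the theorem.
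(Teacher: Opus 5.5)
Your proof is correct and follows the same route as the paper, which simply cites \cite[Proposition~2.1 in Chapter III]{ekeland1976convex} together with the normality of the $u$-problem established in Proposition~\ref{prop:normal}; you have unpacked that citation, namely the identification $h^*=\Phi_f^{**}(\cdot,0)=H_f$, weak duality, and $h(0)=h^{**}(0)$ from continuity of the convex function $h$ at $0$. Your further observation that continuity gives $\partial h(0)\neq\emptyset$, and hence that $\inf H_f$ is attained, is also correct and goes slightly beyond what the corollary asserts; separately, the phrase ``nonvanishing nowhere to $-\infty$'' should read ``never takes the value $-\infty$''.
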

\begin{proof}
    This is equivalent to the conclusion of Proposition~\ref{prop:normal}; see~\cite[Proposition~2.1 in Chapter III]{ekeland1976convex}.
\end{proof}
Note that the problem on the right hand side of~\eqref{eq:no_gap} need not admit a solution.
However, we can safely replace \(C^\infty_c(\O)\) with a larger space \(\operatorname{Lip}_0(\O)\) without altering the supremum's value.
\begin{proposition}
    For each \(f\in L^1(\O)\)  we have the equality
    \begin{equation*}
    \sup_{\substack{u\in C^\infty_c(\O)\\\lVert \nabla u\rVert_{L^\infty(\O;\Rn)}\leq 1}} \langle f, u\rangle_\O =
    \sup_{\substack{u\in \operatorname{Lip}_0(\O)\\\lVert \nabla u\rVert_{L^\infty(\O;\Rn)}\leq 1}} \langle f, u\rangle_\O,
    \end{equation*}
    and the supremum on the right is attained.
\end{proposition}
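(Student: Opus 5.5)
The plan is to prove two inequalities separately and then obtain attainment from a compactness argument.

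\emph{Inequality ``\(\leq\)''.} This is immediate from \(C_c^\infty(\O)\subset\operatorname{Lip}_0(\O)\). For such \(u\), the constraint \(\|\nabla u\|_{L^\infty(\O;\Rn)}\leq1\) means the same thing in both suprema, because the zero extension of a compactly supported smooth function is globally Lipschitz with \(\operatorname{Lip}(u)=\|\nabla u\|_{L^\infty}\).

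\emph{Inequality ``\(\geq\)''.} Fix \(u\in\operatorname{Lip}_0(\O)\) with \(\|\nabla u\|_{L^\infty}\leq1\) and \(\varepsilon\in(0,1)\).

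First, I would note that \(\Rn\setminus\O\) is connected to every point through segments lying in the complement. This is not quite automatic, since \(\O\) is only Lipschitz. I would therefore rather use that \(u\) is Lipschitz on \(\Rn\) and vanishes outside \(\O\). Since \(\nabla u=0\) a.e.\ outside \(\O\) and \(|\nabla u|\leq1\) a.e.\ in \(\O\), the global Lipschitz constant of the zero extension equals \(\|\nabla u\|_{L^\infty(\Rn)}\leq 1\) by convexity of \(\Rn\). Hence \(|u(x)|\leq\operatorname{dist}(x,\Rn\setminus\O)\).

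Next, truncate toward zero by setting \(u_\varepsilon=\operatorname{sign}(u)(|u|-\varepsilon)_+\). This function is still \(1\)-Lipschitz, and it vanishes on \(\{\operatorname{dist}(\cdot,\Rn\setminus\O)\leq\varepsilon\}\). So \(\operatorname{spt}u_\varepsilon\) is a compact subset of \(\O\) at distance at least \(\varepsilon\) from \(\partial\O\).

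Then mollify with a standard mollifier of radius \(\varepsilon/2\). The result \(\eta_{\varepsilon/2}*u_\varepsilon\) lies in \(C_c^\infty(\O)\), and its gradient \(\eta_{\varepsilon/2}*\nabla u_\varepsilon\) still has sup-norm at most \(1\). Moreover \(\|\eta_{\varepsilon/2}*u_\varepsilon-u\|_{L^\infty}\leq\varepsilon+\varepsilon/2\).

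Since \(f\in L^1(\O)\), it follows that \(\langle f,\eta_{\varepsilon/2}*u_\varepsilon\rangle_\O\to\langle f,u\rangle_\O\) as \(\varepsilon\to0\), which gives ``\(\geq\)''.

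\emph{Attainment.} Take a maximizing sequence \(u_n\in\operatorname{Lip}_0(\O)\) with \(\operatorname{Lip}(u_n)\leq1\). These functions are uniformly bounded, with \(|u_n|\leq\operatorname{diam}(\O)\) since they vanish on \(\partial\O\), and equicontinuous on the compact set \(\operatorname{cl}\O\). By Arzel\`a--Ascoli, a subsequence converges uniformly on \(\operatorname{cl}\O\), and hence on \(\Rn\) since all functions vanish outside \(\O\). The limit \(u\) is \(1\)-Lipschitz and vanishes outside \(\O\), so \(u\in\operatorname{Lip}_0(\O)\) with \(\|\nabla u\|_{L^\infty}\leq1\). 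Uniform convergence together with \(f\in L^1\) gives \(\langle f,u_n\rangle_\O\to\langle f,u\rangle_\O\), so \(u\) is a maximizer.

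The main obstacle is the identification of the constraint \(\|\nabla u\|_{L^\infty(\O)}\leq1\) with the global \(1\)-Lipschitz property for elements of \(\operatorname{Lip}_0(\O)\). Once this is secured through the zero extension and the convexity of \(\Rn\), the truncation-and-mollification step and the Arzel\`a--Ascoli step are routine.
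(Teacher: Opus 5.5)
Your proof is correct and has the same skeleton as the paper's: the trivial inclusion, an approximation argument for the reverse inequality, and Arzel\`a--Ascoli for attainment.

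The difference lies in the approximation step. The paper reduces it to two facts. First, for \(u\in\operatorname{Lip}_0(\O)\) the constraint \(\|\nabla u\|_{L^\infty(\O;\Rn)}\leq1\) is the global \(1\)-Lipschitz condition. Second, Lipschitz functions vanishing on \(\partial\O\) can be uniformly approximated by \(C_c^\infty(\O)\) functions with the same Lipschitz constant; the paper quotes this from the literature.

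You prove both facts directly. The first follows from \(\nabla u=0\) a.e.\ off \(\O\) together with convexity of \(\Rn\); add one line justifying the former, e.g.\ \(|\partial\O|=0\) for a Lipschitz domain. For the second you apply the nonexpansive soft-thresholding \(u\mapsto\operatorname{sign}(u)(|u|-\varepsilon)_+\). It vanishes within distance \(\varepsilon\) of \(\Rn\setminus\O\), because \(|u|\leq\operatorname{dist}(\cdot,\Rn\setminus\O)\). You then mollify, which cannot increase a Lipschitz constant taken over all of \(\Rn\). This makes the argument self-contained and uses exactly the structure built into the definition of \(\operatorname{Lip}_0(\O)\). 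The quoted approximation result is more than is needed here.

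For attainment, you use that uniform limits of \(1\)-Lipschitz functions are \(1\)-Lipschitz. This is slightly more direct than the paper's identification of the weak-star limit of the gradients in \(L^\infty\).

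The abandoned remark about segments in the complement can simply be deleted.
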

\begin{proof}
    The equality is the direct consequence of two facts: (i) any function \(u\in \operatorname{Lip}_0(\O)\) has a Lipschitz continuous representative with Lipschitz constant bounded by \(\lVert \nabla u\rVert_{L^\infty(\O;\Rn)}\);
    (ii) any Lipschitz continuous function \(u\) such that \(u|_{\partial \O}=0\) can be uniformly approximated by \(C^\infty_c(\O)\) functions having the same Lipschitz constant, see~\cite[Remark~4.1]{deville2019approximation}.

    Existence of solutions is easily obtained using the direct method of calculus of variations.
    Indeed, let us take an arbitrary bounded open set \(U\supset \closure \O\).  Each maximizing sequence is bounded in \(W^{1,\infty}(U)\) as \(\lVert u_k\rVert_{L^\infty(U)} \leq \operatorname{diam}(\O)\lVert \nabla u_k\rVert_{L^\infty(\O;\Rn)}\leq \operatorname{diam}(\O)\). Therefore, it contains a subsequence \((u_{k'})\) with \(u_{k'}\to \bar{u}\), uniformly in \(C(\closure U)\) (see for example \cite[Theorem 9.16]{brezis2010functional}) and \(\nabla u_{k'}\wsto \bar{v}\) in \(L^\infty(U;\Rn)\).
    One checks easily that \(\bar{v} = \nabla \bar{u}\), \(\lVert \bar{v}\rVert_{L^\infty(\O;\Rn)} \leq 1\), and obviously \(\langle f, \bar{u}\rangle_\O = \lim_{k'\to\infty} \langle f, u_{k'}\rangle_\O\) and \(\bar{u}|_{U\setminus \O} = 0\).  Thus any such \(\bar{u}\in \operatorname{Lip}_0(\O)\)  and is an optimal solution.
\end{proof}

The preceding perturbation identifies the relaxed measure problem, but its dual variable belongs to \(\mathcal{D}(\O)=C_c^\infty(\O)\). 
To obtain the dual problem directly in \(\operatorname{Lip}_0(\O)\), we now restrict the perturbation variable to \(L^1(\O)\).
To this end we define \(\check{\Phi}_f: \mathcal{M}(\closure\O;\Rn) \times L^1(\O) \to \overline{\R}\) be a restriction of the perturbation functional \(\Phi_f\), that is, \(\check{\Phi}_f(\mu,\cdot) = \Phi_f(\mu,\cdot)|_{L^1(\O)}\), \(\forall \mu \in \mathcal{M}(\closure\O;\Rn)\).
Thus the primal problem \(\inf_{\mu \in \mathcal{M}(\closure\O;\Rn)} \check{\Phi}_f(\mu,0) = \inf_{\mu \in \mathcal{M}(\closure\O;\Rn)} J_f(\mu)\) remains unchanged, but the value function and the dual change a little.

\begin{proposition}\label{prop:normal2}
   For each \(f\in L^1(\O)\), the optimal value function \(i(g) = \inf_{\mu \in \mathcal{M}(\closure\O;\Rn)} \check{\Phi}_f(\mu,g)\) is finite and l.s.c.\ at \(g=0\).
\end{proposition}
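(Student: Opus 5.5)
The plan is to make the value function explicit and then compare it directly with \(i(0)\), using the bounded right inverse of the divergence from Proposition~\ref{claim:distr_diver_surj}. By definition of \(\check{\Phi}_f\) and of \(J_{f+g}\), for every \(g\in L^1(\O)\) we have
\[
  i(g)=\inf\bigl\{\|\sigma\|_{L^1(\O;\Rn)} : \sigma\in L^1(\O;\Rn),\ \operatorname{div}\sigma=f+g \text{ in }\mathcal{D}'(\O)\bigr\}.
\]
Indeed, any \(\mu\) that is not of the form \(\sigma\mathcal{L}^d\) with \(\sigma\in L^1(\O;\Rn)\) and \(\operatorname{div}\sigma=f+g\) gives the value \(+\infty\). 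Trivially \(i(g)\geq0\).

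\emph{Finiteness.} Let \(S:L^1(\O)\to W^1(\operatorname{div};\O)\) be the bounded right inverse from Proposition~\ref{claim:distr_diver_surj}, with \(\|Sh\|_{L^1(\O;\Rn)}\leq C\|h\|_{L^1(\O)}\) (one may take \(C=|b-a|\)). Since \(f+g\in L^1(\O)\), the field \(S(f+g)\) is admissible. Hence
\[
  0\leq i(g)\leq C\|f+g\|_{L^1(\O)}<+\infty,
\]
and in particular \(i(0)\) is finite.

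\emph{Lower semicontinuity at \(0\).} I will prove the stronger two-sided estimate
\[
  |i(g)-i(0)|\leq C\|g\|_{L^1(\O)}.
\]
Fix \(g\), \(\varepsilon>0\), and choose \(\sigma\) with \(\operatorname{div}\sigma=f+g\) and \(\|\sigma\|_{L^1}\leq i(g)+\varepsilon\). Then \(\sigma-Sg\) satisfies \(\operatorname{div}(\sigma-Sg)=f\), so it is admissible for \(i(0)\). This gives
\[
  i(0)\leq\|\sigma\|_{L^1}+\|Sg\|_{L^1}\leq i(g)+\varepsilon+C\|g\|_{L^1}.
\]
Letting \(\varepsilon\downarrow0\) yields \(i(g)\geq i(0)-C\|g\|_{L^1}\). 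The symmetric argument, adding \(Sg\) to a near-optimal flux for \(f\), gives the reverse inequality. Thus \(i\) is Lipschitz, hence continuous and in particular l.s.c., at \(g=0\) in the norm topology of \(L^1(\O)\).

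The only point needing care is that the perturbation is taken in \(L^1(\O)\), not in \(\mathcal{D}'(\O)\). This is exactly why the argument works: the correction \(Sg\) is controlled in \(L^1\) by \(\|g\|_{L^1}\). An analogous estimate would fail for general distributional perturbations. I expect no substantial obstacle beyond noting this linearity and boundedness of \(S\).
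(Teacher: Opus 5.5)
Your proposal is correct and follows essentially the same route as the paper. Finiteness comes from the bounded right inverse of Proposition~\ref{claim:distr_diver_surj}, and lower semicontinuity from subtracting that right inverse applied to $g$ from a near-optimal flux for $f+g$. Your extra reverse estimate, giving $|i(g)-i(0)|\leq C\|g\|_{L^1(\O)}$, is a valid strengthening to Lipschitz continuity that the paper does not need.
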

\begin{proof}
   Let \(\Div^{-1}: L^1(\O) \to L^1(\O;\Rn)\) be a bounded right inverse of \(\Div: D(\Div)\subset L^1(\O;\Rn)\to L^1(\O)\), 
   for example the one constructed in Proposition~\ref{claim:distr_diver_surj}.
   Then \(\forall g\in L^1(\O)\) we have the inequalities \(0\leq i(g) \leq \lVert \Div^{-1}(f+g)\rVert_{L^1(\O;\Rn)} < +\infty\), and in particular \(i(0)<+\infty\).

   Let us now fix an arbitrary \(\epsilon >0\).
   Then for each \(g \in L^1(\O)\) there exists \(\sigma_{g,\epsilon} \in L^1(\O;\Rn)\) such that \(\Div\sigma_{g,\epsilon}=f+g\) and 
   \(\lVert \sigma_{g,\epsilon}\rVert_{L^1(\O;\Rn)}=\check{\Phi}_f(\sigma_{g,\epsilon},g)< i(g)+\epsilon\).
   Therefore,
   \(i(0) \leq \check{\Phi}_f(\sigma_{g,\epsilon}- \Div^{-1}(g),0) = \lVert \sigma_{g,\epsilon} - \Div^{-1}(g)\rVert_{L^1(\O;\Rn)}  \leq 
   \lVert \sigma_{g,\epsilon}\rVert_{L^1(\O;\Rn)} + \lVert \Div^{-1}(g)\rVert_{L^1(\O;\Rn)} < i(g)+\epsilon + \lVert \Div^{-1}(g)\rVert_{L^1(\O;\Rn)}\).
   By letting \(g\to 0\) in \(L^1(\O)\) we obtain the inequality \(i(0)\leq \liminf_{g\to 0} i(g) + \epsilon\), which is precisely the lower
   semicontinuity given the fact that \(\epsilon >0\) is arbitrary.
\end{proof}

\begin{proposition}
    For each \(f\in L^1(\O)\)  we have the equality
    \begin{equation}\label{eq:no_gap2}
    \inf_{\mu \in \mathcal{M}(\closure\O;\Rn)} J_f(\mu)=
    \sup_{\substack{u\in \operatorname{Lip}_0(\O)\\\lVert \nabla u\rVert_{L^\infty(\O;\Rn)}\leq 1}} \langle f, u\rangle_\O.
    \end{equation}
\end{proposition}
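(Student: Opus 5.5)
We apply the perturbation duality of \cite[Chapter~III]{ekeland1976convex} to \(\check\Phi_f\), whose primal problem is \(\inf_\mu J_f(\mu)\). By Proposition~\ref{prop:normal2}, the value function \(i\) is finite and lower semicontinuous at \(g=0\). It is also convex, being the marginal of a jointly convex functional. By \cite[Proposition~2.1 in Chapter~III]{ekeland1976convex}, these properties give absence of a duality gap:
\[
\inf_\mu J_f(\mu)=\sup_{u\in L^\infty(\O)}\bigl[-\check\Phi_f^*(0,u)\bigr].
\]
It therefore remains to compute the conjugate \(\check\Phi_f^*\) on \(C(\closure\O;\Rn)\times L^\infty(\O)\) and to identify its effective domain in the \(u\)-variable with the Lipschitz ball.

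For \(u\in L^\infty(\O)\), set \(g=\operatorname{div}\sigma-f\); since \(f\in L^1(\O)\), the perturbation \(g\) lies in \(L^1(\O)\) exactly when \(\operatorname{div}\sigma\in L^1\). This gives
\[
\check\Phi_f^*(0,u)=\sup_{\sigma\in W^1(\operatorname{div};\O)}\Bigl[\langle \operatorname{div}\sigma,u\rangle_\O-\|\sigma\|_{L^1(\O;\Rn)}\Bigr]-\langle f,u\rangle_\O .
\]
Since the bracket is positively homogeneous in \(\sigma\), the supremum is either \(0\) or \(+\infty\). It equals \(0\) precisely when
\[
\langle\operatorname{div}\sigma,u\rangle_\O\le\|\sigma\|_{L^1(\O;\Rn)}\qquad\text{for all } \sigma\in W^1(\operatorname{div};\O).
\]
The dual problem is then \(\sup\langle f,u\rangle_\O\) over the set \(\mathcal U\) of such \(u\). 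The claim reduces to showing that \(\mathcal U\) coincides with \(\{u\in\operatorname{Lip}_0(\O):\|\nabla u\|_{L^\infty(\O;\Rn)}\le1\}\), identified with zero extensions as in the paper.

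The inclusion \(\supseteq\) is the easier one. Take \(u\in\operatorname{Lip}_0(\O)\) with \(\|\nabla u\|_{L^\infty}\le1\). By the approximation result already used above \cite[Remark~4.1]{deville2019approximation}, there are \(u_n\in C_c^\infty(\O)\) with \(\|\nabla u_n\|_{L^\infty}\le1\) and \(u_n\to u\) uniformly. Then
\[
\langle\operatorname{div}\sigma,u_n\rangle_\O=-\langle\sigma,\nabla u_n\rangle_\O\le\|\sigma\|_{L^1(\O;\Rn)} .
\]
We pass to the limit on the left using \(\operatorname{div}\sigma\in L^1\) and uniform convergence.

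The inclusion \(\subseteq\) is the main obstacle. First test with \(\sigma\in C_c^\infty(\O;\Rn)\) and use \(\pm\sigma\). This gives \(|\langle u,\operatorname{div}\sigma\rangle|\le\|\sigma\|_{L^1}\), so the distributional gradient of \(u\) is a bounded functional on \(L^1\), hence lies in \(L^\infty\) with \(\|\nabla u\|_{L^\infty(\O)}\le1\). Thus \(u\in W^{1,\infty}(\O)\), and since \(\O\) is Lipschitz, \(u\) has a continuous representative up to the boundary. The delicate step is the boundary condition \(u|_{\partial\O}=0\), together with the global Lipschitz bound \(1\) of the zero extension, which is not automatic on nonconvex \(\O\). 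For this I would test with fluxes that do not vanish on \(\partial\O\).

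First I would try transport fluxes. Take \(\sigma\) concentrated in thin tubes around a polygonal path \(\gamma\subset\closure\O\) from an interior point \(y\) to a boundary point, or between two boundary points through \(\Rn\setminus\O\) after extension. Choose \(\sigma\) so that \(\operatorname{div}\sigma\) approximates \(\delta_y\) minus an outflow through \(\partial\O\), which the constraint does not see. The inequality then yields \(|u(y)|\le\operatorname{length}(\gamma)\), and letting \(y\to\partial\O\) forces \(u=0\) on \(\partial\O\). More generally, taking \(\sigma\) with \(\operatorname{div}\sigma\approx\delta_y-\delta_z\) yields \(|u(y)-u(z)|\le\operatorname{dist}_{\closure\O}(y,z)\). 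Combined with \(u=0\) on \(\partial\O\), this gives \(|u_0(y)-u_0(z)|\le|y-z|\) for the zero extension: a segment from \(y\) to \(z\) that leaves \(\O\) can be split at its first and last boundary points. Hence \(u_0\in\operatorname{Lip}_0(\O)\) with Lipschitz constant at most \(1\). Making these tube constructions rigorous in \(W^1(\operatorname{div};\O)\) with \(L^1\) mass close to the path length, via mollified Dirac sources so that the divergence lies in \(L^1\), is the technical heart of the argument. Substituting \(\mathcal U\) into the dual problem then gives~\eqref{eq:no_gap2}.
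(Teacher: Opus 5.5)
Your proof is correct. It shares the paper's duality skeleton: normality of the value function via Proposition~\ref{prop:normal2}, then Ekeland--Temam, then identifying the effective domain of \(u\mapsto\check\Phi_f^*(0,u)\). Computing the conjugate only at \(v=0\) is all that is needed. Where you differ is in how you identify that domain.

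For the inclusion of the Lipschitz ball into \(\mathcal U\), you approximate \(u\) by \(C_c^\infty(\O)\) functions, so that \(\langle\Div\sigma,u_n\rangle_\O=-\langle\sigma,\nabla u_n\rangle_\O\) is just the definition of the distributional divergence. The paper instead approximates \(\sigma\). It uses density of restrictions of \(C_c^\infty(\Rn;\Rn)\) in \(W^1(\Div;\O)\), followed by an integration by parts whose boundary term vanishes. Your direction is the more elementary one: it needs no density or trace theory for \(L^1\)-divergence fields.

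For the reverse inclusion, the paper tests directly with restrictions of fields \(\sigma\in C_c^\infty(\Rn;\Rn)\). For these, \(\int_\O u\,\Div\sigma=\int_{\Rn}u_0\,\Div\sigma\) and \(\|\sigma\|_{L^1(\O)}\leq\|\sigma\|_{L^1(\Rn)}\). The constraint therefore gives \(\|\nabla u_0\|_{L^\infty(\Rn)}\leq1\) at once, so the zero extension is globally \(1\)-Lipschitz. No separate treatment of boundary values or nonconvexity of \(\O\) is needed.

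Your three-step route is also valid:
\begin{enumerate}
\item an interior gradient bound from compactly supported test fields;
\item zero boundary values from tube fluxes that leave \(\O\), which uses exterior points arbitrarily close to every boundary point, supplied by the Lipschitz assumption;
\item splitting a segment at its first and last boundary points.
\end{enumerate}
The tube construction you call the technical heart is in fact routine. For a path \(\gamma\) from \(y\) to an exterior point \(w\), take \(\sigma_\varepsilon(x)=\int_0^1\eta_\varepsilon(x-\gamma(t))\gamma'(t)\,\mathrm dt\). This field is smooth with compact support in \(\Rn\), its divergence is \(\eta_\varepsilon(\cdot-y)-\eta_\varepsilon(\cdot-w)\), and its mass is at most the length of \(\gamma\).

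These are precisely test fields of the paper's kind. Noticing this collapses your argument to the paper's one-line zero-extension step. What your version buys in exchange is an explicit transport interpretation of the Lipschitz constraint, and an approximation of \(u\) rather than of \(\sigma\).
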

\begin{proof}
    Note that the left-hand side of~\eqref{eq:no_gap2} is \(\inf_{\mu \in \mathcal{M}(\closure\O;\Rn)} \check{\Phi}_f(\mu,0)\), which in view of Proposition~\ref{prop:normal2}
    and \cite[Proposition~2.1 in Chapter III]{ekeland1976convex} equals \(\sup_{u\in L^\infty(\O)} [-\check{\Phi}_f^*(0,u)]\).
    We will show that the latter expression is precisely the right-hand side of~\eqref{eq:no_gap2}.

    Let \((v,u)\in C(\closure \O; \Rn)\times L^\infty(\O)\) be arbitrary.
    We compute
    \begin{equation*}
        \begin{aligned}
            \check{\Phi}_f^*(v,u) &= \sup_{\substack{\mu \in \mathcal{M}(\closure\O;\Rn)\\
    g \in L^1(\O)}} [\langle \mu, v\rangle + \langle g, u\rangle_\O - \check{\Phi}_f(\mu,g)]
    = \sup_{\substack{\sigma \in W^1(\Div;\O)\\
    g=\Div \sigma-f}} [\langle \sigma, v\rangle_\O + \langle g, u\rangle_\O - \check{\Phi}_f(\sigma,g)]
    \\&=\sup_{\sigma \in W^1(\Div;\O)}
    [\langle \sigma, v\rangle_\O - \lVert \sigma\rVert_{L^1(\O;\Rn)} + \langle \Div \sigma, u\rangle_\O] - \langle f, u\rangle_\O
    \\&=\sup_{\sigma \in C^\infty_c(\Rn;\Rn)}
    [\langle \sigma, v\rangle_\O - \lVert \sigma\rVert_{L^1(\O;\Rn)} + \langle \Div \sigma, u\rangle_\O] - \langle f, u\rangle_\O,
        \end{aligned}
    \end{equation*}
    where the last equality is owing to the density of restriction of \(C^\infty_c(\R^d;\R^d)\) to \(\O\) in \(W^1(\Div;\O)\), see~\cite[Theorem III.2.1]{galdi2011introduction}.
    If \(\exists \sigma_1, \sigma_2, \ldots \in C^\infty_c(\R^d;\R^d)\), such that
    \(\langle \Div\sigma_k, u \rangle_\O > k\lVert \sigma_k\rVert_{L^1(\O;\R^d)}\), then 
    the supremum equals \(+\infty\),
    as \(|\langle \sigma, v\rangle_{\O}|
    \leq \lVert v\rVert_{C(\operatorname{cl}\O;\R^d)}\lVert \sigma\rVert_{L^1(\O;\R^d)}\), \(\forall \sigma\in L^1(\O;\R^d)\).
    Assuming that the supremum is finite, \(\exists C\geq 0\) such that \(\langle \Div\sigma, u \rangle_\O \leq C\lVert \sigma\rVert_{L^1(\O;\R^d)}\), \(\forall \sigma \in C^\infty_c(\R^d;\R^d)\), and consequently \(u \in \operatorname{Lip}_0(\O)\), see~\cite[Proposition 9.18]{brezis2010functional}.\footnote{Note: \cite[Proposition 9.18]{brezis2010functional} assumes \(1<p<+\infty\), but the implication \((ii)\implies(iii)\), which we need here, only relies on~\cite[Proposition 9.3]{brezis2010functional}, which remains valid for \(p=\infty\).}
    After choosing the Lipschitz representative of \(u\) \cite[Proposition~9.3]{brezis2010functional} and integrating by parts~\cite[Corollary 9.66]{leoni2017first}, we can continue the computation in this case:
    \[
    \begin{aligned}
        \check{\Phi}_f^*(v,u)|_{u\in\operatorname{Lip}_0(\O)} &=
        \sup_{\sigma \in C^\infty_c(\Rn;\Rn)}[
        \langle v-\nabla u, \sigma \rangle_{\O} - \lVert \sigma\rVert_{L^1(\O;\R^d)}
        + \underbrace{\langle \sigma\cdot \nu, u \rangle_{\partial\O}}_{=0}] - \langle f, u\rangle_\O
        \\&=
        \sup_{\sigma\in L^1(\O;\R^d)}
            (\lVert v-\nabla u\rVert_{L^\infty(\O;\R^d)} - 1)\lVert \sigma\rVert_{L^1(\O;\R^d)} - \langle f, u\rangle_\O
        \\&=
        \begin{cases}
        - \langle f, u\rangle_\O, &\qquad\text{if\ } \lVert v-\nabla u\rVert_{L^\infty(\O;\R^d)}\leq 1,\\
        +\infty,&\qquad\text{otherwise},
        \end{cases}
    \end{aligned}
    \]
    where \(\nu\) is the outward unit normal for \(\O\), and we used H{\"o}lder's inequality and the density of \(C^\infty_c(\Rn;\Rn)\) in \(L^1(\O;\Rn)\).
\end{proof}

%% file: appx_C.tex
\section{Exact horizon-independence of the nonlocal conductivity-uncapped problem}
\label{app:uncapped-exact-localization}

This appendix considers the reference problem obtained by removing the
pointwise upper bound on the bond conductivity while retaining the global
material constraint. 
We first eliminate the conductivity, then construct the \(L^1\)-graph closure of the
normalized divergence, and finally prove that the resulting problem is independent of the
interaction horizon.

Recall that the normalized occupation variable is
\(a=\kappa/\overline\kappa\). Thus the physical constraint
\(0\leq\kappa\leq\overline\kappa\) becomes \(0\leq a\leq1\); see
Subsection~\ref{subsec:rescaling}. Removing the conductivity cap amounts to
dropping the condition \(a\leq1\), while retaining the material constraint
\(\int_{E_\delta}a\,\mathrm d\lambda_\delta\leq\vartheta\).

\subsection{Elimination of the uncapped conductivity}
\label{subsec:uncapped-elimination}

For \(\vartheta>0\), define
\[
    \mathcal{A}_{\delta,\vartheta}^{\infty}
    =
    \left\{
        a\in L^1(E_\delta,\lambda_\delta):
        a\geq0,\quad
        \int_{E_\delta}a\,\mathrm d\lambda_\delta\leq\vartheta
    \right\},
    \qquad
    \mathcal{A}_{\delta,\vartheta}^{\infty,\sym}
    =
    \mathcal{A}_{\delta,\vartheta}^{\infty}
    \cap L^1_{\sym}(E_\delta,\lambda_\delta).
\]
The normalized conductivity-uncapped value is
\begin{equation*}
    i_\delta^\infty(\vartheta)
    =
    \inf_{
        (a,\sigma)\in\mathcal{A}_{\delta,\vartheta}^{\infty,\sym}\times Y_\delta^q(f)}
    \frac1q
    \left(\frac{|\Od|^2}{\vartheta}\right)^{1-q}
    \int_{E_\delta}a^{1-q}|\sigma|^q
    \,\mathrm{d}\lambda_\delta.
\end{equation*}
As before, the convention at \(a=0\) is the one introduced after
\eqref{eq:energies}.

\begin{proposition}[Elimination of the uncapped conductivity]
\label{prop:uncapped-conductivity-elimination}
Let \(\sigma\in L^1_{\asym}(E_\delta,\lambda_\delta)\). Then
\begin{equation}
\label{eq:uncapped-coefficient-elimination}
    \inf_{a\in\mathcal{A}_{\delta,\vartheta}^{\infty,\sym}}
    \frac1q
    \left(\frac{|\Od|^2}{\vartheta}\right)^{1-q}
    \int_{E_\delta}a^{1-q}|\sigma|^q
    \,\mathrm{d}\lambda_\delta
    =
    \frac{\|\sigma\|_{L^1(E_\delta,\lambda_\delta)}^q}
         {q|\Od|^{2(q-1)}}.
\end{equation}
If \(\sigma\neq0\), the minimum is attained by
\[
    a_{\delta,\sigma}
    =
    \frac{\vartheta}
         {\|\sigma\|_{L^1(E_\delta,\lambda_\delta)}}|\sigma|.
\]
For \(\sigma=0\), both sides of
\eqref{eq:uncapped-coefficient-elimination} vanish.
\end{proposition}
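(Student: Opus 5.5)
The plan is to prove a lower bound by H\"older's inequality, exactly as in the derivation of~\eqref{eq:sparse-flux-upper-bound}, and then to show that the explicit candidate \(a_{\delta,\sigma}\) is admissible and attains it.

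\emph{Lower bound.} Fix any \(a\in\mathcal{A}_{\delta,\vartheta}^{\infty,\sym}\). If \(\int_{E_\delta}a^{1-q}|\sigma|^q\,\mathrm d\lambda_\delta=+\infty\), there is nothing to prove. Otherwise, the convention after~\eqref{eq:energies} forces \(\sigma=0\) \(\lambda_\delta\)-a.e.\ on \(\{a=0\}\). On \(\{a>0\}\) we write \(|\sigma| = a^{-1/p}|\sigma|\cdot a^{1/p}\), using \((1-q)/q=-1/p\). H\"older's inequality with exponents \(q,p\) then gives
\[
\|\sigma\|_{L^1(E_\delta,\lambda_\delta)}
\leq \Bigl(\int_{E_\delta}a^{1-q}|\sigma|^q\,\mathrm d\lambda_\delta\Bigr)^{1/q}
\Bigl(\int_{E_\delta}a\,\mathrm d\lambda_\delta\Bigr)^{1/p}
\leq \vartheta^{1/p}\Bigl(\int_{E_\delta}a^{1-q}|\sigma|^q\,\mathrm d\lambda_\delta\Bigr)^{1/q}.
\]
Raising to the power \(q\) and using \(q/p=q-1\), we get \(\int a^{1-q}|\sigma|^q\,\mathrm d\lambda_\delta\geq \vartheta^{1-q}\|\sigma\|_{L^1}^q\). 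Multiplying by \(\frac1q(|\Od|^2/\vartheta)^{1-q}\), the factors of \(\vartheta\) cancel and the right-hand side of~\eqref{eq:uncapped-coefficient-elimination} results.

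\emph{Attainment.} For \(\sigma\neq0\), the candidate \(a_{\delta,\sigma}=\vartheta|\sigma|/\|\sigma\|_{L^1}\) is nonnegative and lies in \(L^1(E_\delta,\lambda_\delta)\), with \(\int a_{\delta,\sigma}\,\mathrm d\lambda_\delta=\vartheta\). It is symmetric because \(\sigma\) is antisymmetric, so \(|\sigma|^{\mathrm T}=|\sigma|\). This is the only place where antisymmetry is used. Substituting, \(a_{\delta,\sigma}^{1-q}|\sigma|^q = (\vartheta/\|\sigma\|_{L^1})^{1-q}|\sigma|\) on \(\{\sigma\neq0\}\), while on \(\{\sigma=0\}\) the convention \(\iota_{\{0\}}(0)=0\) gives zero. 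Integrating yields \(\vartheta^{1-q}\|\sigma\|_{L^1}^{q}\), which equals the lower bound. This matches the equality case of H\"older, where \(a\propto|\sigma|\).

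For \(\sigma=0\), the choice \(a=0\) gives value \(0\), and the integrand is nonnegative, so both sides vanish.

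There is no real obstacle. The only delicate point is to handle the set \(\{a=0\}\) correctly through the indicator convention when applying H\"older.
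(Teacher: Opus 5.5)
Your proof is correct and follows the paper's argument. You use H\"older's inequality for the lower bound, then check that the explicit occupation \(a_{\delta,\sigma}\propto|\sigma|\) is admissible, symmetric because \(\sigma\) is antisymmetric, and attains equality; your explicit treatment of the set \(\{a=0\}\) through the indicator convention is a detail the paper leaves implicit.
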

\begin{proof}
For every admissible \(a\), H{\"o}lder's inequality gives
\[
    \|\sigma\|_{L^1(E_\delta,\lambda_\delta)}
    \leq
    \left(
        \int_{E_\delta}a^{1-q}|\sigma|^q
        \,\mathrm d\lambda_\delta
    \right)^{1/q}
    \left(
        \int_{E_\delta}a\,\mathrm d\lambda_\delta
    \right)^{1/p}.
\]
Hence
\[
    \int_{E_\delta}a^{1-q}|\sigma|^q
    \,\mathrm d\lambda_\delta
    \geq
    \vartheta^{1-q}
    \|\sigma\|_{L^1(E_\delta,\lambda_\delta)}^q.
\]
For \(\sigma\neq0\), equality is attained by
\(a_{\delta,\sigma}\). This occupation is admissible and symmetric because
\(\sigma\) is antisymmetric. Substitution proves
\eqref{eq:uncapped-coefficient-elimination}. The case \(\sigma=0\) is
immediate.
\end{proof}

\subsection{The closed \texorpdfstring{$L^1$}{L1}-divergence}
\label{subsec:uncapped-closed-divergence}

\begin{proposition}[Closure and prescribed-divergence density]
\label{prop:density-fixed-divergence}
The operator
\[
    \operatorname{div}_\delta:
    Y_\delta^q\subset L^1(E_\delta,\lambda_\delta)
    \to L^q(\O)
\]
is closable. We continue to denote its graph closure by \(\operatorname{div}_\delta\), and denote its domain by \(Y_\delta^1\), equipped with the norm
\(
    \|\sigma\|_{Y_\delta^1}
    =
    \|\sigma\|_{L^1(E_\delta,\lambda_\delta)}
    +
    \|\operatorname{div}_\delta\sigma\|_{L^q(\O)}.
\)
For every \(f\in L^q(\O)\), the space \(Y_\delta^q(f)\) is dense in
\(Y_\delta^1(f)\) with respect to this norm. The same holds in the
antisymmetric subspaces:
\(
    Y_{\delta,\asym}^q(f)\)
    is dense in
    \(Y_{\delta,\asym}^1(f).
\)
\end{proposition}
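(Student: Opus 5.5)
Closability will come from the duality pairing with a dense set of test functions. Suppose \(\sigma_k\in Y_\delta^q\), \(\sigma_k\to0\) in \(L^1(E_\delta,\lambda_\delta)\), and \(\operatorname{div}_\delta\sigma_k\to g\) in \(L^q(\O)\). For every \(\varphi\in C_c^\infty(\O)\) the zero extension is Lipschitz, so \(\|D_\delta\varphi\|_{L^\infty(E_\delta)}\le\operatorname{Lip}(\varphi)\). Hence
\[
\langle \operatorname{div}_\delta\sigma_k,\varphi\rangle_\O=-\int_{E_\delta}\sigma_kD_\delta\varphi\,\mathrm d\lambda_\delta\to0 .
\]
The left-hand side converges to \(\langle g,\varphi\rangle_\O\), so \(g=0\) by density of \(C_c^\infty(\O)\) in \(L^p(\O)\). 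The closure is therefore a well-defined operator, and \(Y_\delta^1\) with the graph norm is a Banach space by construction.

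For the density of \(Y_\delta^q(f)\) in \(Y_\delta^1(f)\), I would correct the divergence of approximants using a bounded right inverse in the \(L^q\) setting. Take \(\sigma\in Y_\delta^1(f)\). By definition of the graph closure there are \(\sigma_k\in Y_\delta^q\) with \(\sigma_k\to\sigma\) in \(L^1(E_\delta,\lambda_\delta)\) and \(g_k=\operatorname{div}_\delta\sigma_k\to f\) in \(L^q(\O)\). Section~\ref{sec:setup} shows that \(\operatorname{div}_\delta:Y_\delta^q\to L^q(\O)\) is surjective with closed range. This follows from the closed-range theorem and the nonlocal Poincar\'e inequality, and the rescaling by a positive constant does not affect it. The open mapping theorem then gives \(C_\delta\) such that every \(h\in L^q(\O)\) has a preimage \(\tau\in Y_\delta^q\) with \(\|\tau\|_{L^q(E_\delta,\lambda_\delta)}\le C_\delta\|h\|_{L^q(\O)}\). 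Apply this to \(h=f-g_k\) to obtain \(\tau_k\), and set \(\widetilde\sigma_k=\sigma_k+\tau_k\in Y_\delta^q(f)\). Since \(\lambda_\delta\) is a probability measure,
\[
\|\tau_k\|_{L^1(E_\delta,\lambda_\delta)}\le\|\tau_k\|_{L^q(E_\delta,\lambda_\delta)}\le C_\delta\|f-g_k\|_{L^q(\O)}\to0 .
\]
So \(\widetilde\sigma_k\to\sigma\) in the \(Y^1_\delta\) norm, and the divergence term vanishes identically.

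For the antisymmetric version, given \(\sigma\in Y^1_{\delta,\asym}(f)\), I would apply the antisymmetric projection \(P_\asym\) of Lemma~\ref{lem:symmetric-antisymmetric-fields} to the approximants \(\widetilde\sigma_k\). The key point is that \(\operatorname{div}_\delta\) annihilates symmetric fields and commutes with \(P_\asym\) on \(Y_\delta^q\). Indeed, since \(D_\delta u\) is antisymmetric, part~3 of that lemma gives \(\int\psi D_\delta u\,\mathrm d\lambda_\delta=0\) for symmetric \(\psi\in L^q\), so \(P_\sym\psi\in Y_\delta^q\) with zero divergence. It follows that \(P_\asym\widetilde\sigma_k\in Y_{\delta,\asym}^q(f)\). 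Contractivity of \(P_\asym\) in \(L^1\) then gives
\[
\|P_\asym\widetilde\sigma_k-\sigma\|_{L^1}=\|P_\asym(\widetilde\sigma_k-\sigma)\|_{L^1}\to0 .
\]
A minor point to check here is that \(P_\asym\) and transposition preserve \(E_\delta\) and \(\lambda_\delta\), which holds because \(E_\delta\) is symmetric.

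The main obstacle I expect is the uniform right-inverse bound. I need to confirm that surjectivity of \(\operatorname{div}_\delta\) onto \(L^q(\O)\), stated in Section~\ref{sec:setup}, yields a bounded selection with respect to the \(L^q\) norm of the flux. It is not enough to control only the \(Y^q_\delta\) graph norm. The graph norm does dominate \(\|\tau\|_{L^q}\), so the open mapping theorem applied to \(\operatorname{div}_\delta:Y^q_\delta\to L^q(\O)\) suffices. The constant may depend on \(\delta\), which is harmless for this fixed-\(\delta\) statement.
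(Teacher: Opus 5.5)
Your proposal is correct and follows the paper's proof essentially step for step. Closability comes from testing against smooth (Lipschitz) functions with $\|D_\delta\varphi\|_{L^\infty(E_\delta)}\leq\operatorname{Lip}(\varphi)$, and density comes from correcting the divergence of graph approximants with an open-mapping right inverse together with $\|\cdot\|_{L^1(E_\delta,\lambda_\delta)}\leq\|\cdot\|_{L^q(E_\delta,\lambda_\delta)}$. In the antisymmetric case you apply $P_{\asym}$ to the finished approximants rather than running the construction inside the antisymmetric subspaces as the paper does, but both rest on antisymmetrization preserving $\operatorname{div}_\delta$, which you justify explicitly through the orthogonality of symmetric fields to $D_\delta u$.
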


\begin{proof}
Suppose that \(\sigma_n\to0\) in
\(L^1(E_\delta,\lambda_\delta)\) and
\(\operatorname{div}_\delta\sigma_n\to g\) in \(L^q(\O)\). For every
\(v\in\operatorname{Lip}_0(\O)\),
\[
    \langle g,v\rangle_\O
    =
    -\lim_{n\to\infty}
    \int_{E_\delta}\sigma_nD_\delta v
    \,\mathrm d\lambda_\delta
    =0,
\]
because \(\|D_\delta v\|_{L^\infty(E_\delta)}\leq
\operatorname{Lip}(v)\). Since
\(C_c^\infty(\O)\subset\operatorname{Lip}_0(\O)\), we have \(g=0\), and
the operator is closable.

Let \(\sigma\in Y_\delta^1(f)\). By definition of the graph closure, there
are \(\sigma_n\in Y_\delta^q\) such that \(\sigma_n\to\sigma\) in
\(L^1(E_\delta,\lambda_\delta)\) and
\(\operatorname{div}_\delta\sigma_n\to f\) in \(L^q(\O)\). The map
\(\operatorname{div}_\delta:Y_\delta^q\to L^q(\O)\) is surjective.
Therefore, by the open mapping theorem, there is \(C_\delta>0\) such that
for every \(g\in L^q(\O)\) one can choose \(\eta_g\in Y_\delta^q\) with
\[
    \operatorname{div}_\delta\eta_g=g,
    \qquad
    \|\eta_g\|_{Y_\delta^q}
    \leq C_\delta\|g\|_{L^q(\O)}.
\]
Choose \(\eta_n\) with
\(\operatorname{div}_\delta\eta_n
=f-\operatorname{div}_\delta\sigma_n\), and set
\(\widehat\sigma_n=\sigma_n+\eta_n\). Then
\(\widehat\sigma_n\in Y_\delta^q(f)\). Since \(\lambda_\delta\) is a
probability measure,
\[
    \|\widehat\sigma_n-\sigma\|_{L^1(E_\delta,\lambda_\delta)}
    \leq
    \|\sigma_n-\sigma\|_{L^1(E_\delta,\lambda_\delta)}
    +
    \|\eta_n\|_{L^q(E_\delta,\lambda_\delta)}
    \to0.
\]
Their divergences all equal \(f\), so convergence also holds in the
\(Y_\delta^1\)-norm.

For the antisymmetric assertion, note that
\(\operatorname{div}_\delta:Y_{\delta,\asym}^q\to L^q(\O)\) is
surjective. Indeed, if \(\operatorname{div}_\delta\eta=g\), then
antisymmetrization is contractive and
\(\operatorname{div}_\delta\eta^{\asym}=g\); see
Appendix~\ref{app:two-point}. The preceding construction may therefore be
carried out in the antisymmetric subspaces.
\end{proof}

We put
\[
    m_\delta
    =
    \inf_{\sigma\in Y_{\delta,\asym}^1(f)}
    \|\sigma\|_{L^1(E_\delta,\lambda_\delta)}.
\]
Owing to Proposition~\ref{prop:density-fixed-divergence},
\begin{equation}
\label{eq:mdelta-two-domains}
    m_\delta
    =
    \inf_{\sigma\in Y_{\delta,\asym}^q(f)}
    \|\sigma\|_{L^1(E_\delta,\lambda_\delta)}.
\end{equation}
Antisymmetrization preserves the divergence and does not increase the
complementary energy. 
Proposition~\ref{prop:uncapped-conductivity-elimination}
and~\eqref{eq:mdelta-two-domains} therefore give
\begin{equation*}
    i_\delta^\infty(\vartheta)
    =
    \frac{m_\delta^q}{q|\Od|^{2(q-1)}}.
\end{equation*}
Thus the material fraction \(\vartheta\) disappears after the uncapped
conductivity has been eliminated. 
The remaining goal of this section is to relate
\(m_\delta\) to \(m_0\) from \eqref{eq:m0-local-state}.

\subsection{Dual representation of the conductivity-uncapped value}
\label{subsec:uncapped-dual}

We use the symmetry properties collected in
Appendix~\ref{app:two-point}. In particular, \(D_\delta u\) is
antisymmetric, antisymmetrization is contractive, and smooth
antisymmetric fields are dense in the corresponding bond spaces.

\begin{lemma}[Characterization of bounded nonlocal gradients]
\label{lem:bounded-nonlocal-gradient}
Let \(u\in L^p(\O)\). Then \(D_\delta u\in L^\infty(E_\delta)\) if and
only if there is \(C\geq0\) such that
\[
    |\langle u,\operatorname{div}_\delta\sigma\rangle_\O|
    \leq
    C\|\sigma\|_{L^1(E_\delta,\lambda_\delta)}
\]
for every \(\sigma\in Y_{\delta,\asym}^1\). In this case,
\begin{equation}
\label{eq:closed-L1-integration-by-parts}
    \langle u,\operatorname{div}_\delta\sigma\rangle_\O
    =
    -\int_{E_\delta}\sigma D_\delta u
    \,\mathrm d\lambda_\delta.
\end{equation}
\end{lemma}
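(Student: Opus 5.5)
The plan is to prove the forward implication by approximation, and to obtain the reverse implication from $L^1$--$L^\infty$ duality restricted to antisymmetric fields, as in Lemma~\ref{lem:symmetric-antisymmetric-fields}.

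\emph{Forward direction.} Assume $D_\delta u\in L^\infty(E_\delta)$. Since $u\in L^p(\O)$ and $\lambda_\delta$ is a probability measure, we have $D_\delta u\in L^p(E_\delta,\lambda_\delta)$, so $u\in X_\delta^p$. For $\sigma\in Y_{\delta,\asym}^q$, the defining integration-by-parts identity for $\operatorname{div}_\delta$ (now with respect to $\lambda_\delta$) gives \eqref{eq:closed-L1-integration-by-parts} directly. For a general $\sigma\in Y_{\delta,\asym}^1$, I take a sequence $\sigma_n\in Y_{\delta,\asym}^q$ with $\sigma_n\to\sigma$ in $L^1(E_\delta,\lambda_\delta)$ and $\operatorname{div}_\delta\sigma_n\to\operatorname{div}_\delta\sigma$ in $L^q(\O)$. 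Such a sequence exists by the definition of the graph closure, after antisymmetrizing, which is contractive and preserves divergence (see Proposition~\ref{prop:density-fixed-divergence}). Then $\int\sigma_n D_\delta u\,\mathrm d\lambda_\delta\to\int\sigma D_\delta u\,\mathrm d\lambda_\delta$ because $D_\delta u\in L^\infty$, and $\langle u,\operatorname{div}_\delta\sigma_n\rangle_\O\to\langle u,\operatorname{div}_\delta\sigma\rangle_\O$ because $u\in L^p$. This yields \eqref{eq:closed-L1-integration-by-parts}, and the bound follows with $C=\|D_\delta u\|_{L^\infty(E_\delta)}$.

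\emph{Reverse direction.} Assume the bound holds with constant $C$. The key step is to test with smooth antisymmetric fields $\psi\in C_c^\infty(E_\delta)\cap L^1_\asym(E_\delta)$, extended by zero. As computed in Subsection~\ref{subsec:bb_ops}, such $\psi$ lie in $Y_\delta^q$ and have an explicit divergence. By Fubini, this divergence satisfies
\[
\langle u,\operatorname{div}_\delta\psi\rangle_\O=-\int_{E_\delta}\psi D_\delta u\,\mathrm d\lambda_\delta .
\]
The pointwise formula is justified for $u_0\in L^p$: the kernel $|x-x'|^{-1}$ is harmless because $\psi$ is supported away from the diagonal, or more generally because $u\in L^p$ and $\psi$ is smooth, so the double integral converges absolutely. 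Hence the linear functional $\psi\mapsto\int\psi D_\delta u\,\mathrm d\lambda_\delta$ satisfies $|\int\psi D_\delta u\,\mathrm d\lambda_\delta|\le C\|\psi\|_{L^1}$ on a dense subspace of $L^1_\asym(E_\delta,\lambda_\delta)$ (item~2 of Lemma~\ref{lem:symmetric-antisymmetric-fields}). It therefore extends to a bounded functional on $L^1_\asym$, represented by some $w\in L^\infty$, which we may take antisymmetric, with $\|w\|_\infty\le C$.

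The main obstacle is identifying $w$ with $D_\delta u$ without knowing a priori that $D_\delta u\in L^1$; note that $u\in L^p$ alone need not give $D_\delta u\in L^1(E_\delta)$. I would handle this by first restricting to $\psi$ supported in $\{|x-x'|>\epsilon\}$. On that set $D_\delta u\in L^p\subset L^1_{\rm loc}$, and $D_\delta u$ is antisymmetric. Item~5 of Lemma~\ref{lem:symmetric-antisymmetric-fields}, applied on the symmetric open set $E_\delta\cap\{|x-x'|>\epsilon\}$, then shows that $D_\delta u-w$ is symmetric there. Since $D_\delta u-w$ is also antisymmetric, it vanishes a.e. Letting $\epsilon\downarrow0$ gives $D_\delta u=w\in L^\infty$ with $\|D_\delta u\|_{L^\infty(E_\delta)}\le C$. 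The identity \eqref{eq:closed-L1-integration-by-parts} then follows from the forward direction.
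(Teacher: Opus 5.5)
Your proposal is correct and follows the paper's proof essentially step for step. The forward direction uses graph-closure approximation, and the converse uses a Hahn--Banach and $(L^1)^*=L^\infty$ representation on smooth antisymmetric test fields, identified with $D_\delta u$ on the truncated sets $\{|x-x'|>\varepsilon\}$ via item~5 of Lemma~\ref{lem:symmetric-antisymmetric-fields}.

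Two small corrections. First, fields in $C_c^\infty(E_\delta)\cap L^1_{\mathrm a}(E_\delta)$ need not be supported away from the diagonal; they only vanish on it. So your first justification fails, and you should rely on the second, e.g.\ $|\psi(x,x')|\leq\|\nabla\psi\|_{\infty}|x-x'|$, which gives absolute convergence where the paper writes a principal-value limit. Second, since $d\geq2$ makes $|h|^{-1}$ locally integrable, $D_\delta u\in L^1(E_\delta)$ does in fact hold for every $u\in L^p(\O)$; your worry to the contrary is unfounded, though the truncation argument remains valid.
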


\begin{proof}
Assume first that \(D_\delta u\in L^\infty(E_\delta)\), and let
\(\sigma\in Y_{\delta,\asym}^1\). By definition of the graph closure,
there are \(\sigma_n\in Y_{\delta,\asym}^q\) such that
\(\sigma_n\to\sigma\) in \(L^1(E_\delta,\lambda_\delta)\) and
\(\operatorname{div}_\delta\sigma_n\to
\operatorname{div}_\delta\sigma\) in \(L^q(\O)\). Passing to the limit in
the original integration-by-parts identity gives
\eqref{eq:closed-L1-integration-by-parts}, and therefore
\[
    |\langle u,\operatorname{div}_\delta\sigma\rangle_\O|
    \leq
    \|D_\delta u\|_{L^\infty(E_\delta)}
    \|\sigma\|_{L^1(E_\delta,\lambda_\delta)}.
\]

Conversely, assume the stated bound. For \(\varepsilon>0\), put
\[
    E_{\delta,\varepsilon}
    =
    \{(x,x')\in E_\delta:|x-x'|>\varepsilon\}.
\]
For every
\(\sigma\in C_c^\infty(E_\delta)\cap L^1_{\asym}(E_\delta)\), the
explicit divergence formula and Fubini's theorem yield
\[
    \langle u,\operatorname{div}_\delta\sigma\rangle_\O
    =
    -\lim_{\varepsilon\downarrow0}
    \int_{E_{\delta,\varepsilon}}\sigma D_\delta u
    \,\mathrm d\lambda_\delta.
\]
Hence
\(F_u(\sigma)=\langle u,\operatorname{div}_\delta\sigma\rangle_\O\)
defines a bounded linear functional, with respect to the \(L^1\)-norm,
on the smooth antisymmetric fields. By Hahn--Banach and
\((L^1)^*=L^\infty\), there is
\(g\in L^\infty(E_\delta,\lambda_\delta)\) such that
\[
    F_u(\sigma)
    =
    \int_{E_\delta}g\sigma\,\mathrm d\lambda_\delta
\]
for every such \(\sigma\).

Fix \(\varepsilon>0\). Testing with smooth antisymmetric fields supported
in \(E_{\delta,\varepsilon}\) gives
\[
    \int_{E_{\delta,\varepsilon}}(g+D_\delta u)\sigma
    \,\mathrm d\lambda_\delta=0.
\]
Since \(D_\delta u\in L^p(E_{\delta,\varepsilon})\), Lemma~A.1(5),
applied with \(r=q\), shows that \(g+D_\delta u\) is symmetric on
\(E_{\delta,\varepsilon}\). Since \(D_\delta u\) is antisymmetric,
\[
    D_\delta u
    =
    -\frac12(g-g^{\mathrm T})
    \qquad\text{a.e. in }E_{\delta,\varepsilon}.
\]
The union of these truncated sets differs from \(E_\delta\) only by the
diagonal, which has \(\lambda_\delta\)-measure zero. Thus the same identity
holds almost everywhere in \(E_\delta\), and
\(D_\delta u\in L^\infty(E_\delta)\). The first part of the proof now
gives \eqref{eq:closed-L1-integration-by-parts}.
\end{proof}

\begin{proposition}[Dual formulation of the conductivity-uncapped problem]
\label{prop:appendix-uncapped-dual}
For every \(\delta>0\),
\begin{equation}
\label{eq:appendix-uncapped-dual}
    m_\delta
    =
    \sup\left\{
        \langle f,u\rangle_\O:
        u\in L^p(\O),\quad
        \|D_\delta u\|_{L^\infty(E_\delta)}\leq1
    \right\}.
\end{equation}
\end{proposition}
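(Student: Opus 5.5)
The identity \eqref{eq:appendix-uncapped-dual} is a Fenchel--Rockafellar duality for the linear constraint \(\operatorname{div}_\delta\sigma=f\) posed in the antisymmetric space \(L^1_{\asym}(E_\delta,\lambda_\delta)\). I would mirror the second perturbation argument of Appendix~\ref{appx:loc_rlx}: the perturbation lives in \(L^q(\O)\), and Lemma~\ref{lem:bounded-nonlocal-gradient} identifies the effective domain of the conjugate.

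\emph{Weak duality.} Let \(u\in L^p(\O)\) satisfy \(\|D_\delta u\|_{L^\infty(E_\delta)}\leq1\), and let \(\sigma\in Y_{\delta,\asym}^1(f)\). The identity \eqref{eq:closed-L1-integration-by-parts} gives
\[
\langle f,u\rangle_\O=-\int_{E_\delta}\sigma D_\delta u\,\mathrm d\lambda_\delta\leq\|\sigma\|_{L^1(E_\delta,\lambda_\delta)}.
\]
Taking the supremum over \(u\) and the infimum over \(\sigma\) yields \(m_\delta\geq\) right-hand side.

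\emph{No duality gap.} Define \(\Psi(\sigma,g)=\|\sigma\|_{L^1}\) if \(\sigma\in Y^1_{\delta,\asym}\) and \(\operatorname{div}_\delta\sigma=f+g\), and \(\Psi(\sigma,g)=+\infty\) otherwise. Here \(\sigma\in L^1_{\asym}(E_\delta,\lambda_\delta)\) and \(g\in L^q(\O)\). The functional \(\Psi\) is convex, and it is lower semicontinuous because \(\operatorname{div}_\delta\) is closed. The value function is \(h(g)=\inf_\sigma\Psi(\sigma,g)\).

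To control \(h\), I use the bounded right inverse from the proof of Proposition~\ref{prop:density-fixed-divergence}. Since \(\operatorname{div}_\delta\) is surjective on the antisymmetric subspace, the open mapping theorem provides \(\eta_g\in Y^q_{\delta,\asym}\) with \(\operatorname{div}_\delta\eta_g=g\) and \(\|\eta_g\|_{L^1}\leq\|\eta_g\|_{L^q}\leq C_\delta\|g\|_{L^q}\). Then \(|h(g)-h(0)|\leq C_\delta\|g\|_{L^q}\), so \(h\) is finite and Lipschitz near \(0\). By \cite[Proposition~2.1 in Chapter III]{ekeland1976convex} this gives
\[
m_\delta=h(0)=\sup_{u\in L^p(\O)}\bigl[-\Psi^*(0,u)\bigr],
\]
and the supremum is attained.

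\emph{Computing the conjugate.} We have
\[
\Psi^*(0,u)=\sup_{\sigma\in Y^1_{\delta,\asym}}\bigl[\langle\operatorname{div}_\delta\sigma,u\rangle_\O-\|\sigma\|_{L^1}\bigr]-\langle f,u\rangle_\O .
\]
By positive homogeneity in \(\sigma\), this is finite only if \(\langle u,\operatorname{div}_\delta\sigma\rangle_\O\leq\|\sigma\|_{L^1}\) for all such \(\sigma\).

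\begin{itemize}
\item If this bound holds, Lemma~\ref{lem:bounded-nonlocal-gradient} gives \(D_\delta u\in L^\infty(E_\delta)\) together with \eqref{eq:closed-L1-integration-by-parts}. The inner supremum becomes \(\sup_\sigma\bigl[-\int\sigma D_\delta u\,\mathrm d\lambda_\delta-\|\sigma\|_{L^1}\bigr]\).
\item Because the smooth antisymmetric fields \(C_c^\infty(E_\delta)\cap L^1_{\asym}\) lie in \(Y^q_{\delta,\asym}\) and are dense in \(L^1_{\asym}\), this supremum may be taken over all of \(L^1_{\asym}\).
\item By Lemma~\ref{lem:symmetric-antisymmetric-fields}(4) with \(r=1\), and since \(D_\delta u\) is antisymmetric, the supremum equals \(0\) if \(\|D_\delta u\|_{L^\infty}\leq1\) and \(+\infty\) otherwise.
\end{itemize}

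Hence \(-\Psi^*(0,u)=\langle f,u\rangle_\O\) exactly on the constraint set \(\{\|D_\delta u\|_{L^\infty(E_\delta)}\leq1\}\), and \(-\infty\) elsewhere. This yields \eqref{eq:appendix-uncapped-dual}.

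The main obstacle is the last step: justifying that the supremum over the divergence domain \(Y^1_{\delta,\asym}\) coincides with the supremum over all of \(L^1_{\asym}\). This is handled by combining the density of smooth antisymmetric fields with the \(L^\infty\) bound on \(D_\delta u\), which makes the integral \(\int\sigma D_\delta u\,\mathrm d\lambda_\delta\) continuous in the \(L^1\) norm.
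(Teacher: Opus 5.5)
Your proposal is correct and follows essentially the paper's argument. The paper applies Fenchel--Rockafellar duality to \(F=\|\cdot\|_{L^1(E_\delta,\lambda_\delta)}\) on \(Y^1_{\delta,\asym}\), \(G=\iota_{\{f\}}\), and the surjective operator \(A=\operatorname{div}_\delta\), and then identifies \(F^*(A^*u)\) exactly as you do, via Lemma~\ref{lem:bounded-nonlocal-gradient}, density of smooth antisymmetric fields, and positive homogeneity. Your version differs only in presentation: you verify the constraint qualification explicitly through the Lipschitz bound on the value function, obtained from the open-mapping right inverse, and this additionally gives attainment of the dual supremum. The paper instead relies implicitly on the surjectivity of \(\operatorname{div}_\delta\) for that step.
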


\begin{proof}
On \(Y_{\delta,\asym}^1\), let
\(F(\sigma)=\|\sigma\|_{L^1(E_\delta,\lambda_\delta)}\), and on
\(L^q(\O)\), let \(G=\iota_{\{f\}}\). The operator
\(A=\operatorname{div}_\delta:Y_{\delta,\asym}^1\to L^q(\O)\) is
continuous and surjective. Fenchel--Rockafellar duality therefore gives
\[
    m_\delta
    =
    -\inf_{u\in L^p(\O)}
    \{F^*(A^*u)+G^*(-u)\},
    \qquad
    G^*(-u)=-\langle f,u\rangle_\O.
\]
We claim that
\begin{equation}
\label{eq:appendix-conjugate-identification}
    F^*(A^*u)
    =
    \begin{cases}
        0,
        &\|D_\delta u\|_{L^\infty(E_\delta)}\leq1,\\
        +\infty,
        &\text{otherwise}.
    \end{cases}
\end{equation}
If \(D_\delta u\in L^\infty(E_\delta)\),
Lemma~\ref{lem:bounded-nonlocal-gradient} gives
\[
    F^*(A^*u)
    =
    \sup_{\sigma\in Y_{\delta,\asym}^1}
    \left\{
        -\int_{E_\delta}\sigma D_\delta u
        \,\mathrm d\lambda_\delta
        -\|\sigma\|_{L^1(E_\delta,\lambda_\delta)}
    \right\}.
\]
The smooth antisymmetric fields lie in \(Y_{\delta,\asym}^q\) and are
dense in \(L^1_{\asym}(E_\delta,\lambda_\delta)\). The last supremum is
therefore zero precisely when
\(\|D_\delta u\|_{L^\infty(E_\delta)}\leq1\), and is infinite otherwise.

Conversely, if \(F^*(A^*u)<+\infty\), positive homogeneity gives
\[
    |\langle u,\operatorname{div}_\delta\sigma\rangle_\O|
    \leq
    \|\sigma\|_{L^1(E_\delta,\lambda_\delta)}
    \qquad
    \text{for every }\sigma\in Y_{\delta,\asym}^1.
\]
Lemma~\ref{lem:bounded-nonlocal-gradient} then gives
\(D_\delta u\in L^\infty(E_\delta)\) with norm at most one. This proves
\eqref{eq:appendix-conjugate-identification}, and the dual formula follows.
\end{proof}

\subsection{Exact horizon-independence}
\label{subsec:uncapped-exact-horizon}

\begin{theorem}[Exact horizon-independence]
\label{thm:appendix-exact-uncapped}
For every \(\delta>0\),
\[
    m_\delta=m_0.
\]
\end{theorem}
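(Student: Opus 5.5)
The plan is to compare the two dual representations directly. By Proposition~\ref{prop:appendix-uncapped-dual},
\[
m_\delta=\sup\bigl\{\langle f,u\rangle_\O:\ u\in L^p(\O),\ \|D_\delta u\|_{L^\infty(E_\delta)}\leq1\bigr\},
\]
while by \eqref{eq:m0-local-state}, $m_0$ is the supremum of $\langle f,v\rangle_\O$ over $v\in\operatorname{Lip}_0(\O)$ with $\operatorname{Lip}(v)\leq1$. It therefore suffices to show that these two admissible sets coincide, modulo choosing representatives. Recall that, after normalization, $D_\delta u(x,x')=(u_0(x)-u_0(x'))/|x-x'|$ on $E_\delta$.

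\emph{Inclusion $\operatorname{Lip}_0\subset$ nonlocal class.} This direction is immediate. If $v\in\operatorname{Lip}_0(\O)$ with $\operatorname{Lip}(v)\leq1$, then $v\in L^\infty\subset L^p(\O)$ and $|D_\delta v|\leq\operatorname{Lip}(v)\leq1$ on $E_\delta$, as already used in \eqref{eq:L1-normalized-ibp}. Hence $m_0\leq m_\delta$.

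\emph{Reverse inclusion.} Let $u\in L^p(\O)$ with $|u_0(x)-u_0(x')|\leq|x-x'|$ for a.e. $(x,x')\in\Od^2$ with $|x-x'|<\delta$. I will show that $u_0$ has a representative in $\operatorname{Lip}_0(\O)$ with constant at most $1$, which gives $m_\delta\leq m_0$.

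First, I would mollify: set $u_\varepsilon=\eta_\varepsilon*u_0$ with $\varepsilon<\delta/4$. For $x\in\O_{\delta-\varepsilon}$ and $|h|<\delta$ with $x+h\in\O_{\delta-\varepsilon}$, write $u_\varepsilon(x)-u_\varepsilon(x+h)$ as an average over $z$ of $u_0(x-z)-u_0(x-z+h)$. Fubini applied to the a.e. pair bound then gives $|u_\varepsilon(x)-u_\varepsilon(x+h)|\leq|h|$. Consequently $|\nabla u_\varepsilon|\leq1$ on $\O_{\delta-\varepsilon}$.

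Next, observe that $u_\varepsilon$ vanishes on $\Rn\setminus\closure\O_{\varepsilon}$. The open sets $\O_{\delta-\varepsilon}$ and $\Rn\setminus\closure\O_\varepsilon$ cover $\Rn$. So $u_\varepsilon$ is smooth on $\Rn$ with $|\nabla u_\varepsilon|\leq1$ everywhere, and by convexity of $\Rn$ it is globally $1$-Lipschitz.

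Finally, I would pass to the limit. The functions $u_\varepsilon$ are supported in a fixed compact set and are uniformly $1$-Lipschitz, so Arzel\`a--Ascoli gives a uniform limit $\bar u$ along a subsequence. Since $u_\varepsilon\to u_0$ in $L^1_{\mathrm{loc}}$, we get $\bar u=u_0$ a.e. Thus $\bar u$ is $1$-Lipschitz, and it vanishes on $\Rn\setminus\O$: it vanishes a.e. on the exterior $\Rn\setminus\closure\O$, hence everywhere there by continuity, and then on $\partial\O$ because $\O$ is Lipschitz, so $\partial\O$ is contained in the closure of the exterior. Therefore $\bar u\in\operatorname{Lip}_0(\O)$ with $\operatorname{Lip}(\bar u)\leq1$, and $\langle f,\bar u\rangle_\O=\langle f,u\rangle_\O$.

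The main obstacle is this reverse step, specifically passing from a bound that holds only almost everywhere on short bonds inside $\Od$ to a global Lipschitz bound. A segment between two far-apart points may leave $\Od$, so one cannot simply chain short bonds. I plan to handle this by the local-gradient and covering argument above rather than by chaining. The point to check is that the neighbourhood $\O_{\delta-\varepsilon}$ on which the mollified difference quotients are controlled overlaps with the open region where $u_\varepsilon\equiv0$, so that no gap remains between the two sets. This works because $\varepsilon<\delta/2$.
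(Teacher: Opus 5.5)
Your proof is correct and has the same structure as the paper's. Both identify the two feasible sets via Proposition~\ref{prop:appendix-uncapped-dual} and \eqref{eq:m0-local-state}, mollify, and extract a globally $1$-Lipschitz representative by Arzel\`a--Ascoli.

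The only divergence is the globalization step, and there the obstacle you anticipate does not arise. The paper observes that $|u_0(x+h)-u_0(x)|\leq|h|$ holds for a.e.\ $x\in\Rn$ and a.e.\ $h\in B_\delta$, not merely for bonds inside $\Od$. A bond of length less than $\delta$ with an endpoint in $\O$ automatically lies in $E_\delta$. If neither endpoint lies in $\O$, both values of $u_0$ vanish. After mollification, the inequality therefore holds for every $x\in\Rn$ and every $h\in B_\delta$: first off a null set of $h$, then for all $h$ by continuity. So chaining short increments along a straight segment in $\Rn$ is legitimate even when the segment leaves $\Od$.

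Your covering of $\Rn$ by $\O_{\delta-\varepsilon}$ and $\Rn\setminus\closure\O_\varepsilon$ (with $\varepsilon<\delta/2$), combined with the gradient bound and the mean value theorem, is a valid but slightly longer way to encode the same fact. In exchange, you are more explicit than the paper about why the continuous representative vanishes on $\partial\O$: you use the fact that a Lipschitz boundary lies in the closure of the exterior.

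One small point in your version: Fubini gives the mollified difference bound only for $h$ outside a null set. You should say explicitly that continuity of $u_\varepsilon$ extends it to all admissible $h$ before you deduce $|\nabla u_\varepsilon|\leq1$ on $\O_{\delta-\varepsilon}$.
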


\begin{proof}
We show that the feasible set in \eqref{eq:appendix-uncapped-dual}
coincides, up to the choice of a representative, with
\[
    \{u\in\operatorname{Lip}_0(\O):\operatorname{Lip}(u)\leq1\}.
\]
If \(u\) belongs to this set, then its zero extension satisfies
\(|u(x)-u(x')|\leq|x-x'|\) for all \(x,x'\in\Rn\), and hence
\(\|D_\delta u\|_{L^\infty(E_\delta)}\leq1\).

Conversely, suppose that
\(\|D_\delta u\|_{L^\infty(E_\delta)}\leq1\), and let \(u_0\) be the zero
extension of \(u\). The change of variables \((x,x')=(x,x+h)\) and
Tonelli's theorem give a null set \(N\subset B_\delta\) such that
\[
    |u_0(x+h)-u_0(x)|\leq|h|
\]
for almost every \(x\in\Rn\) and every \(h\in B_\delta\setminus N\).
Let \(\eta_\varepsilon\) be a standard mollifier and set
\(u_\varepsilon=\eta_\varepsilon*u_0\). Then
\(|u_\varepsilon(x+h)-u_\varepsilon(x)|\leq|h|\) for every
\(x\in\Rn\) and \(h\in B_\delta\setminus N\). Continuity in \(h\)
extends the inequality to all \(h\in B_\delta\). Subdividing an arbitrary
displacement into finitely many increments of length less than \(\delta\)
shows that \(u_\varepsilon\) is globally one-Lipschitz.

Since \(u_\varepsilon\to u_0\) in \(L^p_{\mathrm{loc}}(\Rn)\), the
Arzel\`a--Ascoli theorem yields a locally uniformly convergent subsequence
whose limit is a globally one-Lipschitz representative of \(u_0\). This
representative vanishes on \(\Rn\setminus\O\), and therefore belongs to
\(\operatorname{Lip}_0(\O)\). Proposition~\ref{prop:appendix-uncapped-dual}
now gives \(m_\delta=m_0\).
\end{proof}

\begin{remark}
The identity \(m_\delta=m_0\) is stronger than an asymptotic localization
statement. It is not required for the direct recovery of optimal capped
fluxes in Section~\ref{sec:sparse-flux-recovery}; rather, it provides an
exact reference problem and a nonlocal dual characterization of the global
Lipschitz constraint.
\end{remark}